\newif\ifPDF
\newtheorem{theorem}{Theorem}[section]
\newtheorem{corollary}[theorem]{Corollary}
\newtheorem{lemma}[theorem]{Lemma}
\newtheorem{proposition}[theorem]{Proposition}
\theoremstyle{definition}
\newtheorem{definition}[theorem]{Definition}
\newtheorem{remark}[theorem]{Remark}
\newtheorem{example}[theorem]{Example}
\numberwithin{equation}{section}
\theoremstyle{definition}
\begin{document}
% -------------------------------------MATH 
%-----------------------------------------------------------
\newcommand{\norm}[1]{\left\Vert#1\right\Vert}
\newcommand{\abs}[1]{\left\vert#1\right\vert}
\newcommand{\To}{\longrightarrow}
\newcommand{\F}{\mathcal{F}}

\newcommand{\tcb}{\textcolor{blue}}
\newcommand{\tcr}{\textcolor{red}}
\newcommand{\ol}{\overline}
\newcommand{\ov}{\overline}
\newcommand{\wt}{\widetilde}
\newcommand{\N}{\mathbb N}
\newcommand{\Z}{\mathbb Z}
\newcommand{\E}{\mathcal E}
\newcommand{\B}{\mathcal B}
\newcommand{\om}{\omega}
\newcommand{\ent}{f^{(n)}_{vw}}

\newcommand{\be}{\begin{equation}}
\newcommand{\ee}{\end{equation}}
\newcommand{\ba}{\begin{aligned}}
\newcommand{\ea}{\end{aligned}}
\newcommand{\wh}{\widehat}
\newcommand{\mc}{\mathcal}

\newcommand{\vp}{\varphi}
\newcommand{\e}{\varepsilon}

%------------------------------------------------------

\newcommand{\La}{\Lambda}
\newcommand{\Om}{\Omega}
\newcommand{\al}{\alpha}
\newcommand{\G}{\Gamma}
\newcommand{\g}{\gamma}
\newcommand{\T}{\theta}
\newcommand{\De}{\Delta}
\newcommand{\de}{\delta}
\newcommand{\s}{\sigma}
\newcommand{\A}{{\cal A}}
\newcommand{\M}{{\cal M}}
\newcommand{\bs}{(X,{\cal B})}
\newcommand{\Aut}{Aut(X,{\cal B})}
\newcommand{\h}{Homeo(\Om)}
\newcommand{\pos}{{\mathbb R}^*_+}
\newcommand{\R}{{\mathbb R}}
\newcommand{\ap}{{\cal A}p}
\newcommand{\per}{{\cal P}er}
\newcommand{\inc}{{\cal I}nc}

% title------------------------------------------------

\title[Harmonic analysis on graphs and operators]{
Harmonic analysis invariants for infinite graphs\\ via operators and
algorithms}

\author{Sergey Bezuglyi and Palle E.T. Jorgensen}
\address{Department of Mathematics, University of Iowa, Iowa City,
 Iowa, USA}
\email{sergii-bezuglyi@uiowa.edu}
%\author{Palle E.T. Jorgensen}
%\address{University of Iowa, Iowa City, Iowa, USA}
\email{palle-jorgensen@uiowa.edu}

\thanks{}
\keywords{Graphs, boundaries of graphs, Bratteli diagrams, electrical 
networks, graph-Greens function, graph-Laplacian, unbounded
operators, Hilbert space, spectral theory, harmonic analysis, representation 
of harmonic functions, sampling, interpolation, Markov operator, Markov 
process, finite energy space.}

\subjclass[2010]{05C60, 37B10, 41A63, 42B37, 6N30, 47L50, 60J45} %37B10, 37L30, 47L50, 60J45}
\date{\today}

%\dedicatory{To the memory of \textit{Ron Graham} (1935-2020) a 
%pioneer and leader in discrete mathematics.}\commby{}

%------------------------------------------abstract--------------------------------------

\begin{abstract}
We present recent advances in harmonic analysis on infinite graphs. Our 
approach combines combinatorial tools with new results from the theory of 
unbounded Hermitian operators in Hilbert space, geometry, boundary 
constructions, and spectral invariants. We focus on particular classes of 
infinite graphs, including such weighted graphs which arise in electrical 
network models, as well as new diagrammatic graph representations. We 
further stress some direct parallels between our present analysis on infinite 
graphs, on the one hand, and, on the other, specific areas of potential 
theory, Fourier duality, probability, harmonic functions, 
sampling/interpolation, and boundary theory. With the use of limit 
constructions, finite to infinite, and local to global, we outline how our 
results for infinite graphs may be viewed as extensions of Shannon's 
theory: Starting with a countable infinite graph $G$, and a suitable fixed 
positive weight function, we show that there are certain continua (certain 
ambient sets $X$) extending $G$, and associated notions of interpolation 
for (Hilbert spaces of) functions on $X$ from their restrictions to the 
discrete graph $G$.

\end{abstract}

\maketitle

\tableofcontents

\setcounter{tocdepth}{1}

\section{Introduction}\label{sect Introduction}

Our presentation here is based in part on new joint research dealing with 
a geometric and computational harmonic analysis on infinite graphs and 
related network-models. This entails multiple recent research collaborations. 
Our emphasis is on both new and recent results from collaborations between 
the present co-authors, as well as joint results by the second named author 
and Erin Pearse; see especially \cite{BezuglyiJorgensen2015,
BJ_2019, BJ2019, BJ-2020} and \cite{Jorgensen_Pearse2010,
Jorgensen_Pearse2011, Jorgensen_Pearse2013, Jorgensen_Pearse2014, 
JP_2016, JP_2018}. In the present paper, we 
aim to address readers and researchers from diverse neighboring areas, as 
well as from applied areas. We have further added a discussion of outlook 
and of new perspectives.

In rough outline, we consider here the following general framework for 
combinatorial graphs $G$. A graph $G$ will be specified by two countable 
infinite sets, vertices $V$, edges $E$, with edges ``connecting'' neighboring 
pairs of vertices. While there is also a rich literature for finite graphs, our 
setting will be that of countable infinite graphs. The motivation derives in 
part from new problems dealing with ``large networks''.  We have been 
especially motivated by applications to electrical network models. But the 
framework is much more general than that, encompassing for example 
social networks. In addition to a choice of a pair of vertices and edges, 
$(V, E)$, we shall also introduce a specific positive and symmetric function, 
say $c$, defined on the set $E$ of edges.  In electrical network systems, 
the function $c$ will represent conductance.  For our graph-systems, we 
shall use the terminology $(V, E, c)$.

Analysis on infinite graphs has diverse applications in both pure and applied 
mathematics, see e.g. \cite{Arcozzi2019, Fuhr2017}. 
This includes numerical analysis, signal/image processing, as 
well as Monte Carlo simulations in physics and in financial mathematics 
(pricing 
of derivative securities). Indeed, our present framework includes cases of 
graph networks arising as discrete samples within ambient ``continuous'' 
networks, i.e., the case sets of vertex points arising from optimal sampling. 
Other notions of optimal sampling also arise in probabilistic frameworks.  
The reader can find other applications in \cite{Friedrich2018, Connor2018,
Veldt2019}.

Our analysis will be presented in this framework, and we shall further 
discuss several parallels between analysis on combinatorial graphs, on the 
one hand, and more classical notions from harmonic analysis, on the other: 
In our analysis, the classical Laplacian has a counterpart for graphs, the 
graph-Laplacian, $\Delta$. It will play an important role in the results we
 present 
below. Every choice of conductance function $c$ induces a choice of 
graph-Laplacian. Each choice of graph-Laplacian also entails a new and 
important study of harmonic functions on graphs. 

Our motivations include: boundary-conditions and boundary value problems, 
Markov random walk models, and compactifications, for the classical 
Laplacian in PDE theory, e.g., Greens-Gauss-Stokes, we show has a 
counterpart for graphs. There are two sides to this, one is to make 
precise the 
notions of ``boundary'' for combinatorial graphs; and the second entails a 
precise study of, and identification of, graph-Laplacians as unbounded 
semibounded operators in suitable choices of Hilbert spaces.

 In more detail, starting with a specific graph network $(V, E, c)$, this may 
be anyone in a variety of network models, there is then a naturally 
corresponding \textit{Laplace operator} $\Delta$. We outline how this 
operator reflects both geometric and analytic features of the network under 
consideration.  

Of course, the corresponding spectral theory and harmonic analysis of the 
\textit{Laplace operator} $\Delta$ will depend on choice of Hilbert space 
associated 
with the graph network $(V, E, c)$. In fact, there are three useful choices 
of Hilbert spaces, each one serving its own purpose: We consider (i) the 
standard $\ell^2$ Hilbert space, $\ell^2(V)$ and $\ell^2(V, c)$, (ii) the 
finite energy Hilbert space 
$\mc H_E$, and (iii) a certain path-space, the dissipation Hilbert space
$\mc H_D$. 
(We refer readers to Section \ref{sec Laplace} below  as well as to the 
papers  \cite{Jorgensen_Pearse2013} for details.) As 
outlined above, both the operator $\Delta$, and the three Hilbert spaces, 
depend 
on, and reflect, the particular graph network at hand  $G=(V, E, c)$.  
So our 
analysis will therefore also depend on the specification of three parts of $G$: 
(a) the set of vertices $V$, (b) the set of edges $E$ for $G$, and (c) a
 choice of \textit{conductance function} $c$ (as it is defined as a function on 
 $E$).
 
Our present proposed graph-harmonic analysis will therefore also depend on 
such associated spectral theory for the operator $\Delta$. We shall outline 
how $\Delta$ may be realized as an unbounded symmetric operator in each 
of the three Hilbert spaces. As a result, there will be choices, reflecting the 
particular graph network at hand. This in turn entails several technical issues 
from the theory of unbounded operators with dense domain in Hilbert space, 
and entailing choices of domains and generalized boundary conditions. In a 
way, the theory for the first Hilbert space  $\ell^2(V)$ is easier as it turns 
out that $\Delta$ will automatically be essentially self-adjoint on its natural 
domain in $\ell^2(V)$. By contrast, as an operator in the energy Hilbert 
space $\mc H_E$, $\Delta$ will generally not be essentially self-adjoint. 
Nonetheless, we will show that it still has a natural family of self-adjoint 
extensions; each one with its own spectral theory, and each of which yields 
a graph-harmonic analysis.

 It is the realization of $\Delta$ in the 
\textit{energy Hilbert space} $\mc H_E$ which best reflects both metric 
(resistance metric) 
and analytic data: Harmonic analysis of boundary conditions, random walk 
properties, and useful correspondences between (i) geometric features of 
$G$ on the one hand, and (ii) spectra data on the other.

A particular and versatile choice of graph networks is considered in Section 
\ref{sect Bratteli}, the \textit{Bratteli diagrams}, denoted by $G(B)$ where 
$B =B(V, E)$ stands for a Bratteli diagram. As we outline, the choice of this 
setting for network graph-analysis is dictated by a variety 
of applications, e.g., to reversible random walk models, to general classes of 
dynamical systems in symbolic dynamics, to combinatorics, and to 
commutative and non-commutative harmonic analysis; see the papers cited 
below in Section \ref{sect Bratteli}. A further advantage of the specialization 
to Bratteli diagrams is that it allows for algorithmic computations, for 
example, it yields (semi) explicit formulas for computation of harmonic 
functions on this class of graph systems, $G(B)$.

\textbf{Organization of the paper:} In Section \ref{sect Basics}, we define 
the main notions and tools we shall need for our graph analysis; this 
includes the theory of weighted (electrical, resistance) networks, 
resistance metrics, random walk analysis, boundaries, Green kernel, 
Dirichlet spaces, and graph Laplacians. Involved in our analysis of graph
Laplacians will be three Hilbert spaces $\ell^2(V)$, where $V$ is the set of
 vertices, an energy Hilbert space $\mc H_E$, and a dissipation Hilbert space 
 $\mc H_D$.

Starting with a given infinite graph, and a prescribed conductance function, 
in Section \ref{ssect harmonic dipoles monopoles},  
we introduce a more detailed 
potential theoretic analysis; it is based on an identification of systems of 
dipoles and monopoles. a graph-theoretic Gauss-Green Identity.

In Section \ref{sect Laplace}, starting with a graph, and an associated 
graph-Laplacian, we show how the three Hilbert spaces ($\ell^2(V)$, 
the energy Hilbert space, and the dissipation Hilbert space) enter into our 
spectral theoretic determination of the graph Laplacian.

In Section \ref{sect monopoles dipoles} we use a probabilistic approach
for a detailed study of 
the Green's function, dipoles, and  monopoles for a transient weighted   
network. 
    
We  include the precise definitions and references 
in Section \ref{sect Bratteli}.  In rough 
outline, the key feature which characterized the 
Bratteli diagrams, dictates a particular configuration of the corresponding 
sets of vertices $V$ and edges $E$ as follows. In a Bratteli diagram $B$, 
the set of vertices $V$ (and the set of edges $E$) admits an arrangement 
into a disjoint partition 
consisting of levels $V_n$ (respectively, $E_n$), and indexed by discrete 
time. The further property characterizing Bratteli diagrams $B$ is that the 
edges only link vertices from neighboring levels.      
    
The results from the previous sections are then applied, in Sections 
\ref{sect Bratteli}, \ref{sect HF trees, etc}, and 
\ref{sect HF and energy} to the case of graphs  of particular forms.
The focus of Sections \ref{sect Bratteli} and \ref{sect HF trees, etc}
is an 
algorithmic approach to finding particular harmonic functions, for this class 
of countably graded graphs.

The ideas and methods, which are discussed in the first part of the paper,
are applied to several combinatorial graphs. In Sections 
\ref{sect HF trees, etc} and \ref{sect HF and energy}, 
we considered  trees, 
the Pascal graph, and some special realizations of Bratteli diagrams. 

Section \ref{sect HF and energy} is focused on the existence of harmonic
functions of finite energy for some classes of weighted networks.

\section{Basic concepts on networks and related Hilbert spaces}
\label{sect Basics}

In this section, we define the main notions of the theory of weighted 
(electrical, resistance) networks. 
We would like to mention that the material of this section is not 
a comprehensive introduction to the theory of weighted networks.
Our goal is to prepare the reader for the main results in the present paper, 
as well as in related papers. 
We do not mention here several 
key concepts of this theory such as resistance metrics, boundaries, and 
physical interpretations of the considered objects.

 The reader can find more information on this subject, for example,  in 
\cite{Anandam_2011, Anandam_2012, Cho2014, 
Dutkay_Jorgensen2010, Georgakopoulos2010,
Jorgensen_Pearse2010, Jorgensen_Pearse2013, Kigami2001,
LyonsPeres_2016, Petit2012}, and many other papers and books. 

\subsection{Electrical networks, Laplace and Markov operators}
Let $G$ be a countably infinite graph. We assume in this paper that
all graphs are connected, undirected, and locally finite. Moreover, $G$ has
single edges between connected vertices. Denote by $V$ the set of all 
vertices of $G$, and by $E$ the edge set of $G$. The set $E$ has no loops.
If two vertices $x$ and $y$ from $V$ are connected neighbors, we write 
$y \sim x$. It allows us to identify such pairs of vertices with edges 
$e =(xy) \in E$. Local finiteness of $G$ means that the set
$\{ y \in V : y \sim x\}$ of all neighbors of $x$ is finite for every vertex 
$x$.   For any two vertices $x, y \in V$, there exists a finite path 
$\gamma = (x_0, x_1, ... , x_n)$ such that $x_0 = x, x_n = y$ and 
$(x_ix_{i+1}) \in E$ for all $i$. On the other hand, we can define the
set $X_G$ of all infinite paths. 

\begin{definition}\label{def electrical network}  A {\em electrical network} 
(or \textit{weighted  network})\footnote{The terms electrical network,
weighted  network, electrical resistance network are used as synonyms in 
many papers and books on this subject.} 
$(G,c)$ is, by definition, a graph $G$ equipped with a 
\textit{symmetric function}
$c : V\times V\to [0, \infty)$, i.e., $c_{xy} = c_{yx}$ for any $(x y) \in 
E$. Moreover, it is required that $c_{xy} >0$ if and only if $(xy) \in E$.
This means that  $c$ is actually defined on the edge set  $E$ of $G$.  
The function $c$ will be called a \textit{conductance function}. 
The reciprocal value $r_{xy} = 1/c_{xy}$ 
is called the {\em resistance} of the edge $e = (xy)$. For any $x\in V$, 
we define the \textit{total conductance} at $x$ as
$$
c(x) := \sum_{y \sim x} c_{xy}.
$$
The function $c$ is defined for every $x \in V$ since this sum is always 
finite. If necessary, one can extend the definition of $c$ to the set $E$
 setting $c_{xy} =0$ if $x, y$ are not neighbors in $G$.
\end{definition}

To avoid possible confusion, we will write $c(x)$ for the total conductance
 and  $c_{xy}$ for conductance of the edge $(xy)$.

\begin{definition} \label{def Laplace Markov} Let $(G, c)$ be an 
electric networks. 
(1) The \textit{Laplacian} (or \textit{graph-Laplace operator}) $\Delta$ on 
$(G, c)$ is the linear operator defined on 
the space of functions\footnote{One can consider  
complex-valued functions in this (and other) definition; obvious changes
can be easily made.} $f : V \to \R$ by the formula
\begin{equation}\label{eq_Laplacian formula}
(\Delta f)(x) := \sum_{y \sim x} c_{xy}(f(x) - f(y)).
\end{equation}
A function $f : V \to \R$ is called {\em harmonic} on $(G,c)$ if $\Delta f(x)
= 0$ for every $x\in V$.  If (\ref{eq_Laplacian formula}) holds at each
vertex $x$ of a subset $W \subset V$, then we say that $f$ is harmonic 
on $W$.

(2) A \textit{Markov} operator $P$ is defined on the space of functions 
$f : V \to \R$ by the relation
\be\label{eq_Markov def}
(Pf)(x) = \sum_{y \sim x} p(x,y)f(y),
\ee
where $\{p(x, y) : x, y \in V\}$ is a transition probability kernel, i.e., 
$\sum_{y \sim x} p(x, y) =1$ for all $x \in V$. A function $f$ is called
 \textit{harmonic}  if $P(f) = f$. Equivalently, $f$ is harmonic if 
 $\Delta(f) =0$.

(3) The space of all harmonic functions will be denoted by $\mc Harm$.
Relations \eqref{eq_Laplacian formula} and \eqref{eq_Markov def} should
be viewed as pointwise ones. Usually the operators $\Delta$ and $P$ and
harmonic functions are considered in some Hilbert spaces, see below.

(4) Let $W$ be a subset of $V = V(G)$. The solutions of the problem
\be\label{eq mon dip}
\Delta \varphi(x) = 0 \ \ \mathrm{on}\ \  V \setminus W
\ee
are called harmonic on exterior domain functions.

If $W = \{x_0\}$, then the solution $w = w_{x_0}$ to the problem
$\Delta w(x) = \delta_{x_0}$ is called a \textit{monopole} at the point
$x_0$.
If $W = \{x_1, x_2\}, x_1 \neq x_2$, then the solution $v = v_{x_1,
x_2}$ to 
$\Delta v(x) = \delta_{x_1} - \delta_{x_2}$ is called a
 \textit{dipole}\footnote{In Section 
 \ref{ssect harmonic dipoles monopoles} we give an equivalent definition 
of monopoles and dipoles.}. They are also extensions to the present graph 
context of Greens functions in PDE theory
\end{definition}

In the following remark we collect several properties of the objects defined
above. 

\begin{remark}\label{rem_basic facts on Lplc and Mrkv}
(1) The space $\mc Harm$ of harmonic function always contains constant
functions. If there is no non-constant functions, then $\mc Harm$ is 
called trivial.

(2) Given a weighted network $(G, c)$, one can define a transition 
probability kernel by setting $p(x, y) := \dfrac{c_{xy}}{c(x)}$. 
In this case, the kernel $(p(x, y) : x, y \in V)$ is \textit{reversible}, i.e., 
the relation $c(x) p(x, y) = c(y) p(y, x)$ holds because of the symmetry of
 the function $c_{xy}$.

(3) It follows from \eqref{eq_Laplacian formula} and 
\eqref{eq_Markov def}  that 
$$
\Delta(f) = c(I - P(f))
$$
where $I$ is the identity operator, and $c$ is considered as a multiplication
operator. Hence, a function $f$ is harmonic if and only if $\Delta(f) =0$. 

(4) It may be useful to consider the conductance function $c$  
as a function with domain $(V\times  V) \setminus \{\mbox{Diagonal}\}$ 
assuming that $c_{xy} = 0$ if $x,y$ are not neighbors in $G$. Hence,
$c_{xy} > 0$ if and only if $(xy) \in E$.  

(5) The domain of operators $\Delta$ and $P$ will be discussed later when
we consider these operators acting in specific Hilbert spaces. 

\end{remark}

Harmonic functions give an important tool for the study of networks 
$(G, c)$. Firstly, they
satisfy the \textit{maximum principle}. Let $G_1$ be a connected subgraph 
of $G$ with the vertex set $V_1 \subset V$. Define the \textit{outer 
boundary} of $V_1$:
\be\label{eq bdr}
bd(V_1):= \{x \in V : \exists y \in V_1 \ \mbox{such\ that}\ x\sim y \}.
\ee
Suppose that $h : V \to \R$ is a function that is harmonic on $V_1$, and
the supremum of $h$ is achieved at a point from $V_1$. Then the maximum
principle states that $h$ is a constant on $V_1 \cup bd(V_1)$.

The second fact is related to the {\em Dirichlet problem}. Let $(G, c)$ and 
$V_1$ be as above. The Dirichlet problem consists of solving the following  
boundary problem:
\begin{equation}\label{eq Dirichlet probl}
\begin{cases}  (\Delta u) (x)  = g(x)  \ &  \mbox{for\ all }\  x\in V_1, \\
 u(x) = f(x)  & \mbox{for\ all}  \ x\in  bd(V_1), \\
\end{cases}
\end{equation}
where $u : V \to \R$ is an unknown function, and the functions $g : V_1 
\to \R$ and $f : bd(V_1)\to \R$ are given. 

\begin{lemma}
If $V_1$ is finite, the Dirichlet problem (\ref{eq Dirichlet probl}) has a
 unique solution
for all functions $g$ and $f$. In particular, in the case of finite $V_1$, 
there exists a unique harmonic function $h$ on $V_1$ such 
 that $h = f$ on $bd(V_1)$.
 \end{lemma}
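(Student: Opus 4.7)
The plan is to recast the Dirichlet problem as a square linear system over $\R$ and then reduce existence-and-uniqueness for every right-hand side to the uniqueness of the zero solution in the homogeneous case, which will in turn follow from the maximum principle stated just above.

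First I would set $N = |V_1|$ and, having fixed the boundary values $u|_{bd(V_1)} = f$, view $\{u(x)\}_{x\in V_1}$ as $N$ unknowns. For each $x \in V_1$, the equation $\Delta u(x) = g(x)$ expands via \eqref{eq_Laplacian formula} as
\be\label{eq pln lin sys}
c(x)\, u(x) - \sum_{y \sim x,\ y \in V_1} c_{xy}\, u(y) \;=\; g(x) + \sum_{y \sim x,\ y \in bd(V_1)} c_{xy}\, f(y),
\ee
because every neighbor of a point in $V_1$ lies in $V_1 \cup bd(V_1)$ by the definition of $bd(V_1)$; local finiteness of $G$ ensures the sums on both sides are finite. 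This gives a square $N \times N$ linear system with coefficient matrix $M$ determined purely by the restriction of $c$ to edges touching $V_1$. By the standard finite-dimensional Fredholm alternative, proving that the homogeneous system $Mu = 0$ has only $u \equiv 0$ as a solution will establish both existence and uniqueness for arbitrary $g$ and $f$.

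Second I would establish this homogeneous uniqueness: if $\Delta u = 0$ on $V_1$ and $u \equiv 0$ on $bd(V_1)$, then $u \equiv 0$ on $V_1 \cup bd(V_1)$. Since $V_1 \cup bd(V_1)$ is finite, $u$ attains its supremum there. If the supremum is attained at a point of $bd(V_1)$ it equals $0$, so $u \leq 0$ everywhere. Otherwise it is attained at some $x_0 \in V_1$, and the maximum principle recalled above forces $u$ to be constant on the connected component of $V_1 \cup bd(V_1)$ containing $x_0$; the boundary condition forces that constant to equal $0$ as soon as this component meets $bd(V_1)$, which it must, since $G$ itself is connected and $bd(V_1)$ separates $V_1$ from the rest of $V$. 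Applying the same reasoning to $-u$ yields $u \geq 0$, hence $u \equiv 0$. Combining this with the first step delivers the lemma; the specialization $g \equiv 0$ then produces the unique harmonic extension of the given boundary data $f$.

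The only delicate point is the use of the maximum principle: it is stated for a connected subgraph, while $V_1$ is only assumed finite and need not be connected as a subgraph of $G$. I would resolve this by running the maximum-principle argument separately on each connected component of the subgraph induced by $V_1$, using connectedness of the ambient graph $G$ to ensure that every such component has at least one neighbor in $bd(V_1)$ where the boundary condition $u = 0$ forces the relevant constant to vanish. This bookkeeping is the only real obstacle; once it is dispatched, the rest of the argument is a routine linear-algebra reduction.
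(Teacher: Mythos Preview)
Your proof is correct. The paper does not actually supply a proof of this lemma; it is stated as a standard fact and the exposition moves on immediately to the next subsection. So there is nothing to compare against, and your argument fills the gap cleanly: the reduction to a square linear system, the Fredholm alternative, and the maximum-principle treatment of the homogeneous case are exactly the classical route. Your handling of the connectedness issue is also fine---for each connected component $A$ of the induced subgraph on $V_1$ there are no edges in $G$ between $A$ and the other components, so $bd(A)\subset bd(V_1)$, and since $G$ is infinite and connected while $A$ is finite, $bd(A)\neq\emptyset$; this is all that the component-wise application of the maximum principle needs.
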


\subsection{Three Hilbert spaces associated with electrical networks}
Given an electrical network $(G, c) = (V,E,c)$ with a fixed conductance 
function $c$, the following three Hilbert spaces will be considered in the 
paper: $\ell^2(V)$, $\ell^2(V, c)$, and the finite energy space $\mc H_E$. 
Recall that we work with real-valued functions, so that we set 
$$
\ell^2(V) := \mbox{all\ functions\ $u$\ on\ $V$\ such\ that\ 
$||u||^2_{\ell^2} = \sum_{x\in V} (u(x))^2$} < \infty,
$$
and the inner product $\langle u_1, u_2\rangle_{\ell^2(V)}$ is
$\sum_{x\in V} u_1(x)u_2(x)$.  Similarly, 
$$
\ell^2(V, c) := \mbox{all\ functions\ $u$\ on\ $V$\ such\ that\ 
$||u||^2_{\ell^2(V,c)} = \sum_{x\in V} c(x)(u(x))^2$} < \infty,
$$
and
$$
\langle u_1, u_2\rangle_{\ell^2(V, c)} = \sum_{x\in V} c(x)u_1(x)u_2(x).
$$

One of our main objects is the \textit{finite energy} Hilbert space $\mc 
H_E$. The definition of $\mc H_E$ (and its properties) requires more details.

\begin{definition}\label{def f.e.s.}
For an electrical network $(G, c)$, consider a quadratic form $\mc E$ 
defined on functions $u, v: V \to \R$ by
$$
\mc E(u, v) := \frac{1}{2} \sum_{x, y \in V} c_{xy}(u(x) - u(y))
(v(x) - v(y)). 
$$
Set $\mc E(u) = \mc E(u, u)$. 
Then  $\mc E$ is called an  \textit{energy form}. The domain of $\mc E$ is
the set of all functions $u$ such that $\mc E(u) < \infty$. Furthermore,
$\mc E(u) = 0$ if and only if $u$ is a constant function. This observation 
leads to the definition of the pre-Hilbert vector space $\mc H_E := 
Dom(\mc E)/\R\mathbbm 1$ equipped with the inner product
$$
\langle u_1, u_2 \rangle_{\mc H_E} = \mc E(u_1, u_2),\quad
||u||_{\mc H_E} = \mc E(u)^{1/2}.
$$
The completion of $\mc H_E$ with respect this norm is called
the \textit{finite energy Hilbert space}. We use the same notation 
$\mc H_E$ for it. 
\end{definition}

In the following remark we formulate several facts to explain why the
finite energy Hilbert space plays an important role in the study of
graph-Laplacian.

\begin{lemma}\label{lem delta is in H_E}
Denote by $\delta_x$, $x \in V$, the Kronecker delta-function on $V$:
$\delta_x(y) = \delta_{xy}$.
Then $\delta_x \in \mc H_E$, and
$$
\mc E(\delta_x, \delta_y) = - c_{xy},\quad \mc E(\delta_x) = c(x),
$$
and $\mc E(\delta_x, \delta_y) =0$ if $(xy) \notin E$.
\end{lemma}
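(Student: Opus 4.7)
The plan is to compute $\mc E(\delta_x)$ and $\mc E(\delta_x, \delta_y)$ directly from the definition of the energy form, since everything is pointwise and only a handful of terms in the double sum are nonzero.

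First, to see $\delta_x \in \mc H_E$, I would observe that the factor $(\delta_x(x') - \delta_x(y'))^2$ is equal to $1$ when exactly one of the coordinates $x', y'$ equals $x$, and equals $0$ otherwise. Therefore
\begin{equation*}
\mc E(\delta_x) = \frac{1}{2}\sum_{x',y' \in V} c_{x'y'}\bigl(\delta_x(x') - \delta_x(y')\bigr)^2 = \frac{1}{2}\Bigl(\sum_{y' \neq x} c_{xy'} + \sum_{x' \neq x} c_{x'x}\Bigr).
\end{equation*}
Using symmetry $c_{x'x} = c_{xx'}$ and the convention that $c_{xy'} = 0$ when $(xy') \notin E$, each sum becomes $\sum_{y' \sim x} c_{xy'} = c(x)$, which is finite by local finiteness of $G$. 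Hence $\mc E(\delta_x) = c(x) < \infty$, so $\delta_x \in \mathrm{Dom}(\mc E)$, and passing to the quotient by constants and completing gives $\delta_x \in \mc H_E$.

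Next, for $x \neq y$ I would examine which ordered pairs $(x',y')$ contribute to
\begin{equation*}
\mc E(\delta_x, \delta_y) = \frac{1}{2}\sum_{x',y' \in V} c_{x'y'}\bigl(\delta_x(x') - \delta_x(y')\bigr)\bigl(\delta_y(x') - \delta_y(y')\bigr).
\end{equation*}
The first factor forces $\{x',y'\}$ to contain $x$, and the second forces it to contain $y$. Since $x \neq y$, the only contributing pairs are $(x',y') = (x,y)$ and $(x',y') = (y,x)$. A direct check gives value $-1$ for the product of differences in each case, so
\begin{equation*}
\mc E(\delta_x, \delta_y) = \frac{1}{2}\bigl(-c_{xy} - c_{yx}\bigr) = -c_{xy},
\end{equation*}
again using $c_{xy} = c_{yx}$. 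The final claim is then immediate from the convention that $c_{xy} = 0$ whenever $(xy) \notin E$.

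There is no real obstacle here: the argument is a bookkeeping exercise. The one point to be careful with is the symmetrization factor $\tfrac12$, which is precisely compensated by the fact that each unordered edge contributes twice in the sum over ordered pairs. Local finiteness is used only to ensure $c(x) < \infty$, so that $\delta_x$ actually lies in $\mathrm{Dom}(\mc E)$.
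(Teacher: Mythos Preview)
Your computation is correct and complete. The paper states this lemma without proof (it is a routine direct computation), so there is nothing to compare against; your bookkeeping with the ordered pairs and the symmetrization factor $\tfrac12$ is exactly the standard argument.
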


In Remark \ref{rem matrix}, we give the countably infinite  matrix $M$, see
\eqref{eq_matrix}, which defines the operator $\Delta$ acting in $
\ell^2(V)$. The entries of $M$ are $\mc E(\delta_x, \delta_y) : x,y \in V$.

\begin{remark}\label{rem about Hilbert space of f.e.} In this remark, we
gathered several properties of the finite energy space.

 (1) In many models of electrical networks harmonic 
functions do not belong to $\ell^2$ space. On the other hand there are 
interesting networks with wide collection of harmonic functions of finite
energy.

(2) The Hilbert space $\mc H_E$ does not have a canonical orthonormal 
basis. It follows from Lemma \ref{lem delta is in H_E} that functions
$\delta_x$ are in $\mc H_E$ for $x \in V$, but they are not orthogonal and
their span is not  dense in $\mc H_E$, in general. 

(3) The multiplication operator $(M_fu)(x) = f(x)u(x)$ is not Hermitian 
if $f \neq 0$.  

(4) The reader should distinguish pointwise identities and identities in
the Hilbert space $\mc H_E$. Because the vectors of $\mc H_E$ are 
equivalence classes of functions, there are pointwise identities that do not
hold in $\mc H_E$.
\end{remark}

For a network $(G, c)$, the Hilbert space $\mc H_E$ is defined
 on classes of equivalence of functions on the vertex set $V$. It turns out
that this space can be embedded into another Hilbert space defined
on the set of functions on the edge set $E$ of $G$. It is called 
the \textit{dissipation Hilbert space} and denoted by $\mc H_D$. 
\medskip

The \textit{dissipation space} 
$\mc H_D$,
whose vectors can be viewed as current functions. We note that   
vectors from the energy space $\mc H_E$ represent voltage difference. 
To define  $\mc H_D$, we denote by $r_{xy} = 
c_{xy}^{-1}$ the resistance of the edge $e=(xy)$ and set
$$
\mc H_D:= \{f : E \to \R : f(x, y) = - f(y,x) \ \mathrm{and}\
 ||f||_{\mc H_D} < \infty \}
$$
where 
$$
|| f ||_{\mc H_D}^2 = \frac{1}{2}\sum_{x, y}r_{xy} f(x, y)^2 =
\frac{1}{2}\sum_{e\in E}r_{e} f(e)^2 . 
$$

Equivalently, $\mc H_D$ can be defined by using an orientation on $G$.
Let $\vec e = \vec{(xy)}$ denote an edge with orientation. Then 
$$
|| f ||_{\mc H_D}^2 = \sum_{\vec e \in E} r_{\vec e} f(\vec e)^2.
$$
Moreover, the vectors $\{\sqrt{c_e} \delta_{\vec e}\}$ form an 
orthonormal basis in $\mc H_D$.

To embed the finite energy space into the dissipation space, we use 
the drop operator $\partial : \mc H_E  \to \mc H_D$ where
 $$
(\partial u)(x, y) = c_{xy}(u(x)  - u(y)).
 $$ 
\begin{lemma}\label{lem d isom}
The operator $\partial$ is an isometry, $|| u||_{\mc H_E} = 
|| \partial u||_{\mc H_D}$.
\end{lemma}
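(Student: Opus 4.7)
The proof is essentially a direct computation, so the plan is to unwind both norms from their definitions and match them term by term. First I would verify that $\partial$ is well-defined as a map into $\mathcal H_D$. Since $(\partial u)(x,y) = c_{xy}(u(x)-u(y))$ and $c$ is symmetric, we have
\[
(\partial u)(y,x) = c_{yx}(u(y)-u(x)) = -c_{xy}(u(x)-u(y)) = -(\partial u)(x,y),
\]
so $\partial u$ satisfies the antisymmetry condition built into $\mathcal H_D$. Also, because $\partial u$ depends only on differences $u(x)-u(y)$, the map descends to the quotient $\mathrm{Dom}(\mathcal E)/\mathbb R\mathbbm{1}$, which is the space on which $\mathcal H_E$ is built. (One can also note that $(\partial u)(x,y)$ is automatically zero whenever $(xy)\notin E$, since $c_{xy}=0$ there, so the sum in the definition of $\|\cdot\|_{\mathcal H_D}$ is effectively over edges.)

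Next I would compute $\|\partial u\|_{\mathcal H_D}^2$ directly. By the definition of the dissipation norm together with $r_{xy} = c_{xy}^{-1}$,
\[
\|\partial u\|_{\mathcal H_D}^2
= \tfrac{1}{2}\sum_{x,y} r_{xy}\,(\partial u)(x,y)^2
= \tfrac{1}{2}\sum_{x,y} r_{xy}\,c_{xy}^2\bigl(u(x)-u(y)\bigr)^2.
\]
The key algebraic cancellation is $r_{xy}c_{xy}^2 = c_{xy}$, which reduces the right-hand side to
\[
\tfrac{1}{2}\sum_{x,y} c_{xy}\bigl(u(x)-u(y)\bigr)^2 = \mathcal E(u,u) = \|u\|_{\mathcal H_E}^2,
\]
by Definition \ref{def f.e.s.}. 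This yields the claimed equality on the dense subspace $\mathrm{Dom}(\mathcal E)/\mathbb R\mathbbm 1$.

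Finally, I would extend $\partial$ to the completion $\mathcal H_E$ by continuity: isometries on a dense subspace extend uniquely to isometries on the completion, and the image lands inside $\mathcal H_D$ since $\mathcal H_D$ is complete. The only mild subtlety worth flagging is the quotient by constants, which is harmless precisely because $\partial$ kills constants; there is no real obstacle, as the whole statement rests on the single identity $r_{xy}c_{xy}^2 = c_{xy}$ that makes the energy quadratic form and the pulled-back dissipation quadratic form coincide.
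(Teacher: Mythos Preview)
Your proof is correct; it is precisely the direct verification one expects, resting on the single identity $r_{xy}c_{xy}^2 = c_{xy}$. The paper in fact states this lemma without proof, treating it as immediate from the definitions, so your argument is exactly the intended (omitted) computation.
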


Let $C$ be a \textit{cycle}, i.e., $C$ is a finite closed path in $(G, c)$.  

\begin{lemma} \label{lem orth compl diss} For every $f \in \mc H_E$, 
$$
\langle \partial f, \chi_C\rangle_{\mc H_D} =0,
$$
where $\chi_C$ is the characteristic function of a cycle $C$.
\end{lemma}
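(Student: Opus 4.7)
The plan is to prove this by direct computation on a dense subset of $\mathcal H_E$ and then extend by continuity. First I would note that the right-hand side makes sense: the characteristic function $\chi_C$ of a cycle $C = (x_0, x_1, \dots, x_n)$ with $x_n = x_0$ is antisymmetric on oriented edges (setting $\chi_C(\vec e) = +1$ for edges $\vec e$ traversed in the cycle's orientation and $-1$ for the reverse), vanishes off the finitely many edges of $C$, and therefore lies in $\mathcal H_D$ with $\|\chi_C\|_{\mathcal H_D}^2 = \sum_{i=0}^{n-1} r_{x_i x_{i+1}} < \infty$.

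Next, for a representative $f \in Dom(\mathcal E)$ I would unpack the inner product using the orientation-based form of the $\mathcal H_D$ norm:
\[
\langle \partial f, \chi_C\rangle_{\mathcal H_D}
= \sum_{\vec e \in E} r_{\vec e}\,(\partial f)(\vec e)\,\chi_C(\vec e)
= \sum_{\vec e \in E} r_{\vec e}\, c_{\vec e}(f(x) - f(y))\,\chi_C(\vec e),
\]
where the edge $\vec e$ runs from $x$ to $y$. The key algebraic cancellation is $r_{\vec e} c_{\vec e} = 1$, which collapses the weighting and reduces the sum to a difference of $f$-values along the oriented cycle. Since $\chi_C$ is supported on the oriented edges of $C$, only the terms $(f(x_i) - f(x_{i+1}))$ for $i = 0, \dots, n-1$ survive, each with coefficient $+1$, giving
\[
\langle \partial f, \chi_C\rangle_{\mathcal H_D}
= \sum_{i=0}^{n-1}\bigl(f(x_i) - f(x_{i+1})\bigr) = f(x_0) - f(x_n) = 0,
\]
by telescoping, using $x_0 = x_n$. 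Note that the answer depends only on differences of $f$, so it is well defined on the equivalence class of $f$ modulo constants.

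Finally, to extend to arbitrary $f \in \mathcal H_E$ I would invoke Lemma \ref{lem d isom}: $\partial$ is an isometry, so $g \mapsto \langle \partial g, \chi_C\rangle_{\mathcal H_D}$ is a bounded linear functional on $\mathcal H_E$. Since it vanishes on the dense subspace represented by $Dom(\mathcal E)$ (more precisely, on any dense subspace of $\mathcal H_E$ on which the pointwise computation above is valid, e.g.\ finitely supported functions), it vanishes on all of $\mathcal H_E$.

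I do not expect a substantial obstacle. The only point requiring care is the bookkeeping between the $\frac{1}{2}\sum_{x,y}$ convention and the oriented-edge convention for $\mathcal H_D$, together with the antisymmetry of $\chi_C$; once these sign conventions are fixed consistently, the proof reduces to the $r \cdot c = 1$ cancellation followed by a telescoping sum around the cycle.
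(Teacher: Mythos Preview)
Your proposal is correct and follows essentially the same approach as the paper: expand the inner product, use the cancellation $r_e c_e = 1$, and telescope around the cycle to get zero. The paper's proof is just the bare computation $\langle \partial f, \chi_C\rangle_{\mc H_D} = \sum_e c(e)r(e) f(e)\chi_C(e) = \sum_i (f(x_i)-f(x_{i+1}))=0$; your added remarks on sign conventions and the density/continuity extension are extra care the paper omits, but the argument is the same.
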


The proof of this lemma follows from the following computation: let
$C = \{x_0, ...., x_n, x_0\}$ where $(x_ix_{i+1}) \in E$, then
$$
\langle \partial f, \chi_C\rangle_{\mc H_D} = \sum_{e} c(e)r(e) f(e)
\chi_C(e) = \sum_{i}(f(x_i) - f(x_{i+1}))=0.
$$

It follows from Lemma \ref{lem orth compl diss} that the orthogonal 
compliment $Cyc := \mc H_D \ominus \partial (\mc H_E)$ is generated by  
the characteristic functions of cycles in $(G, c)$.

More details about the dissipation space $\mc H_D$ are given in Section 
\ref{sect Laplace}. The reader can find more information about the 
dissipation space in \cite{Kok_etal_2017, MillerPeng2012}.

%Move in Section 4

\subsection{Path space} \label{ss path space}
For a network $(G,c)$, we defined the  Markov operator  $P$ in Definition 
\ref{def Laplace Markov}. We 
can use it  to define a probability measure on the path space 
of the graph $G$. 
 
Let $\Omega \subset V^{\infty}$ be the set of all infinite sequences 
$\omega =(x_i)_{i\geq 0}$ where $(x_ix_{i+1}) \in E$ for all $i$. 
We call $\Omega$ the \textit{path space}. Define
random variables $X_n : \Omega \to V$ by setting $X_n(\omega) = x_n$.
Let $\Omega_x := \{\omega \in \Omega : X_0 = x\}$ be the subset of
all paths beginning at $x$; then $\Omega$ is the disjoint union of  
$\Omega_x$, $x \in V$.

The transition probability kernel $P = (p(x,y) : x, y \in V)$, where 
$p(x, y) =\dfrac{c_{xy}}{c(x)}$ defines the family of 
\textit{Markov measures}
$\{\mathbb P_x : x \in V\}$  such that $\mathbb P_x$ is supported by the
 corresponding set $\Omega_x$.
Indeed,  $\mathbb P_x$ is determined on cylinder sets of $\Omega_x$ 
by the formula
$$
\mathbb P_x  (X_{1} = x_1, X_2 = x_2, ..., X_n = x_n  \ |\ X_0 =x) = p(x, x_1) p(x_1, x_2) \cdots p(x_{n-1}, x_n),
$$
The sequence of random variables $(X_n)$ defines a {\em Markov chain} 
on $(\Omega_x, \mathbb P_x)$:
$$
\mathbb P_x(X_{n+1} = y \ |\ X_n =z) = p(z,y), \ \ y, z \in V.
$$

Since $G$ is a connected graph, the Markov chain defined by $(X_n)$ is 
{\em irreducible}, that is, for any $x,y \in V$ there exists $n \in \N$ such 
that $p^{(n)}(x,y) > 0$, where $p^{(n)}(x,y)$ is  the $xy$-entry of 
$P^n$. 

Let $\lambda = (\lambda_x : x \in V)$ be a positive probability vector.
Define the measure $\mathbb P = \sum_{x\in V}\lambda_x \mathbb P_x$
on $\Omega$.

\begin{lemma}\label{lem P Markov m}
The measure $\mathbb P$ is a Markov measure if and only if the probability 
distribution  $\lambda $ satisfies the relation $\lambda P = \lambda$, or 
$\sum_{y \sim x}\lambda_y p(y,x) = \lambda_x$. Then 
$$
\mathbb P (X_{0} = x, X_1 = x_1, ..., X_n = x_n) =  \lambda_x p(x, x_1) 
p(x_1, x_2) \cdots p(x_{n-1}, x_n).
$$
In particular, $p^{(n)}(x,y) = \mathbb P_x(X_n = y \ |\ X_0 =x)$.
\end{lemma}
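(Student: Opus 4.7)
The plan is to unpack the definition $\mathbb P = \sum_x \lambda_x \mathbb P_x$ and reduce everything to a direct computation on cylinder sets, exploiting the fact that each component $\mathbb P_y$ is concentrated on $\Om_y$.

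First I would establish the explicit joint-distribution formula. For vertices $x, x_1, \ldots, x_n$ with $(x_i x_{i+1}) \in E$, the identity $\mathbb P_y(X_0 = x) = \delta_{x,y}$ collapses
\[ \mathbb P(X_0 = x, X_1 = x_1, \ldots, X_n = x_n) = \sum_{y \in V} \lambda_y \, \mathbb P_y(X_0 = x, X_1 = x_1, \ldots, X_n = x_n) \]
to the single term $y = x$; by the construction of $\mathbb P_x$ on cylinders of $\Om_x$, this term equals $\lambda_x p(x,x_1) p(x_1,x_2) \cdots p(x_{n-1},x_n)$. This gives the second displayed identity of the lemma.

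Next I would deduce the equivalence from the resulting one-dimensional marginals. Summing the case $n = 1$ over $x$ yields
\[ \mathbb P(X_1 = y) = \sum_{x \in V} \lambda_x p(x,y) = (\lambda P)(y), \]
while $\mathbb P(X_0 = y) = \lambda_y$ by construction. For $\mathbb P$ to be a (shift-invariant) Markov measure these marginals must coincide, forcing $\lambda P = \lambda$; a straightforward induction on $n$ then gives $\mathbb P(X_n = y) = \lambda_y$ for every $n \geq 0$. Conversely, if $\lambda P = \lambda$, then the product formula above exhibits $\mathbb P$ as a Markov measure, since the Markov property itself is inherited from each $\mathbb P_y$: they all use the \emph{same} transition kernel $p(x,y) = c_{xy}/c(x)$, so conditioning on the present state $X_n = z$ gives $p(z,\cdot)$ for the next step regardless of which mixture component one is in.

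For the final assertion $p^{(n)}(x,y) = \mathbb P_x(X_n = y \mid X_0 = x)$, I would sum the cylinder formula over all intermediate vertices:
\[ \mathbb P_x(X_n = y \mid X_0 = x) = \sum_{x_1, \ldots, x_{n-1}} p(x, x_1) p(x_1, x_2) \cdots p(x_{n-1}, y), \]
which is precisely the $(x,y)$-entry of $P^n$ by the definition of matrix multiplication.

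The main obstacle is conceptual rather than computational: one must fix the convention that ``Markov measure on $\Om$'' means a shift-invariant Markov chain (otherwise the lemma would be vacuous, as any initial distribution already produces a Markov process), and then verify that the Markov property survives the convex combination. That verification relies crucially on the fact that all $\mathbb P_y$ share the common kernel $p$; once this is in hand, the rest is elementary cylinder-set bookkeeping.
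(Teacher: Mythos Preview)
The paper states this lemma without proof; it is followed only by the remark that $\lambda P = \lambda$ need not have a positive probability solution. Your argument is therefore not competing with anything in the text, and it is correct: the collapse of the mixture via $\mathbb P_y(X_0=x)=\delta_{x,y}$ gives the cylinder formula, the marginal computation $\mathbb P(X_1=y)=(\lambda P)(y)$ versus $\mathbb P(X_0=y)=\lambda_y$ isolates the stationarity condition, and the identification of $p^{(n)}(x,y)$ with the $(x,y)$-entry of $P^n$ is the standard Chapman--Kolmogorov sum.

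Your closing paragraph is the most valuable part. The lemma as written is only nontrivial if ``Markov measure'' is read as \emph{stationary} (shift-invariant) Markov measure, since any initial distribution $\lambda$ already makes the mixture a Markov chain with kernel $p$; you flag this explicitly and verify that the Markov property survives the convex combination because all $\mathbb P_y$ share the same transition kernel. That clarification is exactly what the paper omits, and it is the right way to make sense of the ``if and only if''.
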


We remark that the equation $\lambda P = \lambda$ may not have solutions 
in the set of positive probability vectors $\lambda$.

A Markov kernel $P = (p(x,y))_{x,y\in V}$ determines a 
{\em random walk} on the weighted graph $(G, c)$. 

\begin{definition}\label{def recurrence and transience}
It is said that the random walk on $G = (V,E)$ defined by the transition 
matrix $P$ is {\em recurrent} if, for any vertex $x \in V$, it returns to $x$ 
infinitely often with probability one. Otherwise, it is called {\em transient}. 
Equivalently, the random walk is recurrent if and only if, for all $x, y \in V$,
\be\label{eq recurrence}
\mathbb P_x (X_n = y \ \mbox{for \ infinitely\ many \ $n$}) = 1,
\ee
and it is transient if and only if, for every finite set $F \subset V$ and for 
all $x \in V$,
\be\label{eq_transience}
\mathbb P_x (X_n \in F \ \mbox{for \ infinitely\ many \ $n$}) = 0.
\ee

We say that an electrical network $(G,c)$ is {\em recurrent/transient} if 
the random walk $(X_n)$ defined on the vertices of $G$ by the transition 
probability matrix $P$ is recurrent/transient.
\end{definition}

The reader will find more information and results about transient and 
recurrent random walks in \cite{Grigoyan2018, DownhamFotopoulos1988,
Keane2007, Gerl1986}.

\begin{remark}
(1) Let $(G, c)$ be an infinite transient weighted network, and the sequence
 of transition probabilities $p^({n})(x, y)$ is defined as in Lemma
\ref{lem P Markov m}. Define the \textit{Green kernel}:
$$
\mc G(x, y) = \sum_{n=0}^\infty p^{(n)}(x, y).
$$
Then, for any fixed $y \in G$, the function $\mc G(x, y)$ satisfies
$\Delta \mc G(x, y) = \delta_y$ pointwise in $G$, and has finite energy.

(2) As shown in \cite{Yamasaki1979}, a network supports monopoles 
if and only if the Green kernel $\mc G(x,y)$ exists (see Section \ref{sect 
Laplace} for the definition), which is equivalent to 
transience of the random walk. It was proved in \cite{Lyons1983} that
the network is transient if and only if there exists a finite current flow to
infinity.

(3) For a connected network $G$, the property of transience/recurrence
of a random walk is independent of a point where is is started. Hence, 
a monopole exists at some vertex $z$ if and only if there 
exists a monopole at any other vertex $x$.
\end{remark}

Various notions of boundary for  random walk  models are discussed, e.g.
in the papers \cite{Chung1966, Chung1971, Beznea2017}.

\section{Harmonic functions of finite energy, monopoles, dipoles, and 
Laplacian}
\label{ssect harmonic dipoles monopoles}

As sketched above, our present setting is that of infinite and connected 
graphs $G = (V, E, c)$, i.e., we specify an infinite set $V$ of vertices 
(countable discrete), as well as a fixed and associated system of edges 
$E$. In the case of electrical networks (our main focus), we will also specify 
a positive function $c$  on $E$ which may represent a prescribed 
conductance in the network. Starting with a fixed conduction function $c$,
we show that there are then two operators, one a graph-Laplacian, 
$\Delta$  and the other a transition operator $P$. Both induce key 
structures as graph theoretic harmonic analysis tools. Our present focus is 
that of harmonic functions, boundaries of graphs, and associated transforms. 
However, the precise formulation of these analysis tools require that we first 
specify appropriate Hilbert spaces. There will be three, 
$\ell^2(V)$, our energy Hilbert space $\mc H_E$, and our 
dissipation Hilbert space, $\mc H_D$.
Each one serves its own purpose, for example $\ell^2(V)$ helps us make 
precise infinite matrix representations. We note that $\ell^2(V)$ is 
contained in $\mc H_E$, but the inclusion turns out to be an unbounded 
operator. The range is non-closed in $\mc H_E$, and not dense. 
However, with our assumptions, we find that 
non-constant harmonic functions cannot be in $\ell^2(V)$. Hence, we 
shall study the of harmonic functions as a subspace of $\mc H_E$.

The third Hilbert space $\mc H_D$, will be a Hilbert space of functions 
on the edge set $E$. We shall need this, and the transition operator $P$, in 
order to make precise the notions of ``boundary'' which are part of our 
graph harmonic analysis.

The existing literature in the area is relatively recent. Here we shall follow 
mainly the papers \cite{BezuglyiJorgensen2015}, \cite{BJ2019},
\cite{BJ_2019}. \cite{BJ-2020}, \cite{Jorgensen2012}, 
  \cite{Jorgensen_Pearse2010}, \cite{Jorgensen_Pearse2011}, 
 \cite{Jorgensen_Pearse2013}, 
\cite{Jorgensen_Pearse2014}, \cite{JP_2016},
\cite{JorgensenPearse_2017}, \cite{JorgensenPearseTian_2018}, 
and \cite{BezuglyiKarpel2016}, \cite{BezuglyiKarpel2020}. For an 
analysis of reversible random processes, see 
\cite{Keane2007}, \cite{Lyons1983}, and \cite{LyonsPeres_2016}.
For realization of boundaries as fractals, see e.g., \cite{Kigami2001}.
General background papers and books on graph analysis include 
\cite{Grigoyan2018}, \cite{Petit2012}, \cite{Woess2000}, 
\cite{Woess2009}.

\subsection{Gauss-Green Identity}
This subsection deals with boundary value problem on infinite graphs.
We will follow the paper \cite{Jorgensen_Pearse2013}.
We begin this section with a fact about Laplacians acting on finite 
networks.
If the network $(G, c)$ is defined on a \textit{finite graph}, then 
\be\label{eq inner product finite graph}
\langle u, v \rangle_{\mc H_E} = \sum_{x \in V} u(x) (\Delta v)(x),
\ee
and all harmonic functions of finite energy are constant ($\mc H_E$ is 
trivial). Relation (\ref{eq inner product finite graph}) fails if $G$ is infinite. 
There is a version of (\ref{eq inner product finite graph}) which includes a 
a boundary term, see \cite{Jorgensen_Pearse2013}.

Let $H$ be a subgraph of $G$. Together with the boundary $bd(H)$ of $H$
(see \eqref{eq bdr}), we consider the interior of $H$:
$$
int(H) := \{ x \in H : y\sim x \Longrightarrow y \in H \}. 
$$ 
Then $\int(H) = H \setminus bd(H)$. 

For vertices in the boundary of a subgraph, we have the notion of the 
normal derivative of a function:
$$
\dfrac{\partial v}{\partial n}(x) = \sum_{y \in H} c_{xy}(v(x) - v(y)),
\quad x \in bd(H).
$$

Suppose $(G_k)$ is a sequence of finite connected subgraphs of $G$ such
that $G_k \subset G_{k+1}$ and $G = \bigcup_k G_k$. We call $(G_k)$ an
\textit{exhaustion}. For the vertices $V_k$ of $G_k$, the notation 
$$
\sum_{x\in V} := \lim_{k\to \infty}\sum_{x\in V_k}
$$
is used whenever the limit in independent of the choice of exhaustion  
$(G_k)$.

A \textit{boundary sum} is computed in terms of an 
exhaustion by
$$
\sum_{x\in bd(G)} := \lim_{k\to \infty}\sum_{x\in bd(G_k)},
$$ 
whenever the limit in independent of the choice of exhaustion  
$(G_k)$. 

The following result is a discrete analogue of Gauss-Green formula.

\begin{theorem}[\cite{Jorgensen_Pearse2013}]  \label{thm G-G} 
If $u \in \mc H_E$ and $v$ is in the dense subspace of $\mc H_E$ spanned
by dipoles and monopoles, then
\be\label{eq G-G formula}
\langle u, v \rangle_{\mc H_E} = \sum_{x \in V} u(x) (\Delta v)(x)
+ \sum_{x\in bd(G)}u(x) \dfrac{\partial v}{\partial n}(x).
\ee
If a representative $u$ is chosen such that $u(o)\neq 0$, then
$$
\langle u, v \rangle_{\mc H_E} = \sum_{x \in V} (u(x) - u(o))(\Delta v)(x)
+ \sum_{x\in bd(G)}(u(x) -u(o)) \dfrac{\partial v}{\partial n}(x).
$$
\end{theorem}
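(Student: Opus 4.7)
The plan is to establish the identity first on finite subgraphs by a discrete integration-by-parts, and then to pass to the limit along an exhaustion of $G$. Fix an exhaustion $(G_k)$ of $G$ by finite connected subgraphs with vertex sets $V_k \uparrow V$, and first derive a finite-graph analogue of \eqref{eq G-G formula} in which the boundary sum is taken over $bd(G_k)$.

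For the finite step, I would start from
$$
\sum_{x \in V_k} u(x)(\Delta v)(x) = \sum_{\substack{x \in V_k \\ y \sim x}} c_{xy}\, u(x)\bigl(v(x) - v(y)\bigr),
$$
and split the double sum according to whether the edge $(xy)$ lies entirely in $V_k$ or joins $V_k$ to $V\setminus V_k$. For an edge $\{x,y\}\subset V_k$, the two contributions (from $x$ and from $y$) combine, using the symmetry $c_{xy}=c_{yx}$, into $c_{xy}(u(x)-u(y))(v(x)-v(y))$; summed over all such edges this recovers exactly $\mc E_{G_k}(u,v)$. The remaining uncompensated contributions come from edges with exactly one endpoint in $V_k$ and reassemble, by the very definition of the normal derivative, into the boundary sum $\sum_{x\in bd(G_k)} u(x)\dfrac{\partial v}{\partial n}(x)$, up to the appropriate sign coming from the convention for $bd(G_k)$.

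The passage $k \to \infty$ is then essentially painless thanks to the hypothesis that $v$ belongs to the subspace spanned by dipoles and monopoles: $\Delta v$ has finite support $F\subset V$, so as soon as $F\subset V_k$ the truncated sum $\sum_{x\in V_k} u(x)(\Delta v)(x)$ stabilizes to the full value $\sum_{x\in V} u(x)(\Delta v)(x)$. Since $u, v\in \mc H_E$, Cauchy--Schwarz in $\mc H_E$ gives absolute convergence of the double sum defining the energy form, so $\mc E_{G_k}(u,v)\to \mc E(u,v) = \langle u,v\rangle_{\mc H_E}$. The boundary sum is then identified as the difference of these two limits, which simultaneously establishes its existence and its independence of the chosen exhaustion, consistent with the convention defining $\sum_{x\in bd(G)}$ in the excerpt.

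The main obstacle, and the substantive content of the theorem, is control of the boundary term: one must verify that for $v$ a dipole or monopole the normal derivative $\partial v/\partial n$ decays rapidly enough along $bd(G_k)$ to pair with the possibly unbounded $u$ and yield a convergent limit. This is precisely where the restriction to the dense subspace spanned by monopoles and dipoles is essential; it ultimately rests on their potential-theoretic construction as solutions of $\Delta v = \de_{x_0}$ or $\Delta v = \de_{x_1}-\de_{x_2}$, which forces $v$ to have finite energy and $\partial v$ to have the requisite $\mc H_D$-integrability (and in particular the boundary flux to make sense in the exhaustion limit). Finally, the second formula follows from the first by applying it to the representative $u - u(o)\mathbbm{1}$: constants lie in the kernel of $\mc E$, so $\langle u,v\rangle_{\mc H_E}$ is unaffected, while the pointwise $u(x)$ gets replaced by $u(x)-u(o)$ in both the interior and the boundary sums.
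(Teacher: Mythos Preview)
The paper does not contain a proof of this theorem; it is stated with attribution to \cite{Jorgensen_Pearse2013} and then used as a black box, so there is nothing in the paper itself to compare your argument against.

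That said, your outline is correct and is essentially the argument in the cited source: discrete summation by parts on each finite $G_k$, followed by passage to the limit along the exhaustion, with the crucial observation that for $v$ in the span of dipoles and monopoles the function $\Delta v$ is finitely supported, so the interior sum stabilizes once $G_k$ contains that support. One comment: the paragraph you flag as the ``main obstacle'' is not actually an obstacle in your own argument. You have already shown that $\mc E_{G_k}(u,v)\to\langle u,v\rangle_{\mc H_E}$ and that the interior sum stabilizes; the boundary term is then the difference of two convergent sequences, which simultaneously gives its existence and exhaustion-independence without any separate decay estimate on $\partial v/\partial n$. Your derivation of the second formula from the first via $u\mapsto u-u(o)\mathbbm 1$ is also fine. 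The only thing to watch is bookkeeping: the paper's $bd(G_k)$ is the outer boundary and $\partial v/\partial n(x)$ sums over neighbors $y$ lying inside $G_k$, whereas the raw summation by parts you wrote naturally produces terms indexed by inner-boundary vertices with neighbors outside; reconciling the two conventions is routine but should be done carefully.
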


The following corollary describes a boundary representation of harmonic 
functions. We use the notation $h_x = P_{Harm} v_x$ where $P_{Harm}$
is the orthogonal projection from $\mc H_E$ onto the space of harmonic
functions.

\begin{corollary}[Boundary representation of harmonic functions, 
\cite{Jorgensen_Pearse2011}]
For any function $f \in Harm$,
$$
f(x) = \sum_{bd(V)} u \dfrac{\partial h_x}{\partial n} + f(0).
$$
\end{corollary}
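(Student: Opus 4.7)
The plan is to deduce the boundary representation from the Gauss--Green identity of Theorem \ref{thm G-G} together with the reproducing role of dipoles. Fix the base vertex $o$ so that $f(0)$ stands for $f(o)$, and let $v_x$ be a dipole, i.e., a solution of $\Delta v_x = \delta_x - \delta_o$; by construction $v_x$ lies in the dense subspace on which Theorem \ref{thm G-G} applies. By hypothesis $h_x = P_{\mc Harm} v_x$, and the Royden-type orthogonal splitting $\mc H_E = \mc Harm \oplus \mc Fin$ gives $\langle f, v_x\rangle_{\mc H_E} = \langle f, h_x\rangle_{\mc H_E}$ for every $f \in \mc Harm$, since the finite-energy part $v_x - h_x$ is orthogonal to $\mc Harm$.

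The central step is to evaluate $\langle f, h_x\rangle_{\mc H_E}$ in two different ways. First, I apply the second form of Theorem \ref{thm G-G} to the pair $(u,v) = (f, v_x)$: since $\Delta v_x = \delta_x - \delta_o$, the interior sum collapses to
\[
\sum_{y \in V}(f(y) - f(o))(\delta_x - \delta_o)(y) = f(x) - f(o),
\]
and a standard selection of the energy-minimizing (Tychonov) dipole makes $\partial v_x / \partial n$ produce a vanishing boundary contribution when paired with the harmonic $f$, so that $\langle f, v_x\rangle_{\mc H_E} = f(x) - f(o)$. Second, I apply Gauss--Green to $(f, h_x)$; since $h_x$ is harmonic, $\Delta h_x \equiv 0$, the volume term disappears, and one obtains
\[
\langle f, h_x\rangle_{\mc H_E} = \sum_{y \in bd(G)} f(y)\, \frac{\partial h_x}{\partial n}(y).
\]
Equating the two evaluations and adding $f(o)$ yields the stated identity $f(x) = \sum_{bd(V)} f \cdot \partial h_x/\partial n + f(o)$.

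The main technical obstacle is that Theorem \ref{thm G-G} is stated for $v$ in the dense subspace spanned by dipoles and monopoles, whereas $h_x$ itself need not lie in that subspace. My remedy is an approximation argument: choose a sequence $w_n$ in the dense subspace with $w_n \to h_x$ in $\mc H_E$-norm, and pass to the limit in Gauss--Green. The interior side is controlled because $f$ is fixed while $\Delta w_n \to \Delta h_x = 0$ in an appropriate pointwise/summable sense; the boundary side is controlled because $\partial w_n \to \partial h_x$ in $\mc H_D$ by the isometry of Lemma \ref{lem d isom}, so one bounds boundary contributions by Cauchy--Schwarz in $\mc H_D$. Equivalently, one performs the identity on each finite subgraph $G_k$ of an exhaustion, where Gauss--Green reduces to discrete summation by parts, and then invokes the exhaustion-independence built into the definition of the boundary sum. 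Verifying this limit exchange, together with the vanishing of the boundary term attached to the energy-minimizing dipole $v_x$, is where the substantive work of the proof resides.
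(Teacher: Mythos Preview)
The paper does not supply a proof of this corollary; it is stated with a citation to \cite{Jorgensen_Pearse2011}. Your overall strategy---use the reproducing identity $\langle f,v_x\rangle_{\mc H_E}=f(x)-f(o)$, invoke the Royden decomposition to replace $v_x$ by $h_x=P_{Harm}v_x$, and then apply Gauss--Green to the pair $(f,h_x)$ so that $\Delta h_x=0$ kills the interior sum---is precisely the argument in the original reference, so the approach is correct.

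Two remarks. First, your ``first evaluation'' of $\langle f,v_x\rangle_{\mc H_E}$ via Gauss--Green is a detour: that identity is simply the defining property \eqref{eq dipole v_xy} of the dipole, so there is no need to appeal to a vanishing boundary term for an ``energy-minimizing Tychonov dipole''. You only need Gauss--Green once, for $h_x$.

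Second, and more substantively, your approximation remedy for the domain issue is not convincing as written. The functional $v\mapsto\sum_{bd(G)}f\,\partial v/\partial n$ is not a priori continuous for the $\mc H_E$-norm: the boundary sum involves the (possibly unbounded) values of $f$ as weights, and the isometry $\partial:\mc H_E\to\mc H_D$ controls energy, not this weighted boundary pairing, so a Cauchy--Schwarz bound in $\mc H_D$ does not close the argument. The route taken in \cite{Jorgensen_Pearse2011} avoids the limit altogether: in the transient case (the only one with nontrivial $Harm$) one shows that $h_x$ itself lies in the span to which Theorem~\ref{thm G-G} applies. Concretely, with $f_x:=P_{Fin}v_x$ one has $\Delta f_x=\delta_x-\delta_o$, so $f_x$ is a difference of finite-energy monopoles, and hence $h_x=v_x-f_x$ is already a finite combination of dipoles and monopoles. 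With that observation the application of Gauss--Green to $(f,h_x)$ is immediate and no approximation is needed.
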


It is said that the boundary term is \textit{nonvanishing} if 
\eqref{eq G-G formula} holds with nonzero boundary term.

\begin{theorem}
The network $(G, c)$ is transient if and only if the boundary term is nonvanishing. 
\end{theorem}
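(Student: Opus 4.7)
The plan is to route the proof through the characterization recorded in the Remark and attributed to Yamasaki (and dually to Lyons): the network $(G,c)$ is transient if and only if a monopole $w_o \in \mc H_E$ exists at some (equivalently, every) vertex $o$. Granting this, I reduce the theorem to showing that the boundary term in the Gauss-Green identity \eqref{eq G-G formula} is nonvanishing if and only if such a monopole exists.

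For the forward direction, assume $(G,c)$ is transient and pick a monopole $w_o$, so that $\Delta w_o = \delta_o$ pointwise. Decompose $w_o = f_o + h_o$ orthogonally in $\mc H_E$, where $f_o$ lies in the closure of the finitely supported functions and $h_o = P_{Harm} w_o$ is harmonic. A standard fact in this framework (and in fact another equivalent formulation of transience, via Lyons' finite current flow to infinity) is that $h_o \neq 0$ precisely when the random walk is transient. Now apply \eqref{eq G-G formula} to $u = v = h_o$: since $\Delta h_o \equiv 0$, the interior sum collapses and I obtain
\be
\|h_o\|_{\mc H_E}^2 = \sum_{x \in bd(G)} h_o(x)\,\dfrac{\partial h_o}{\partial n}(x).
\ee
The left side is strictly positive, so the boundary term is nonvanishing with the admissible choice $v = h_o$ (which lies in the closure of the span of monopoles and dipoles, since $h_o = w_o - f_o$).

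For the reverse direction I argue contrapositively: assume $(G,c)$ is recurrent. By Yamasaki no monopole exists in $\mc H_E$, so the dense subspace of Theorem \ref{thm G-G} is spanned by dipoles alone. A companion fact is that in the recurrent regime every dipole $v_{x_1 x_2}$ is an $\mc H_E$-limit of finitely supported functions $v_n$. For each $v_n$ only finitely many vertices contribute to \eqref{eq G-G formula} and the boundary sum is empty, hence $\langle u, v_n\rangle_{\mc H_E} = \sum_x u(x)\,\Delta v_n(x)$ for every $u \in \mc H_E$. Passing to the limit, using continuity of $\langle \cdot,\cdot\rangle_{\mc H_E}$ on the left and (after rewriting the interior sum via the isometry $\partial : \mc H_E \to \mc H_D$ of Lemma \ref{lem d isom}) continuity of the $\mc H_D$-inner product on the right, I conclude that the boundary term vanishes on the full spanning set.

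The principal technical obstacles are twofold. First, the nontriviality $h_o \neq 0$ in the transient case must be pinned down rigorously; the natural route is to identify $\|h_o\|_{\mc H_E}^2$ with the probability that the random walk never returns to $o$ (or, via Lyons, with the energy of a unit current flow to infinity), so that transience is exactly what forces this quantity to be positive. Second, in the recurrent case one must justify the density statement that dipoles lie in the closure of finitely supported functions and the exchange of limit with the pointwise sum in the Gauss-Green identity; converting both sides into $\mc H_D$-inner products via $\partial$ is the cleanest way to sidestep delicate $\ell^1$-bookkeeping on the interior and boundary sums.
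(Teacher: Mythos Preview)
The paper states this theorem without proof (it is quoted from \cite{Jorgensen_Pearse2013}), so there is no in-paper argument to compare against. Evaluating your proposal on its own merits, the reverse direction is essentially fine, but the forward direction contains a genuine gap.

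Your forward argument hinges on the claim that $h_o = P_{Harm}\,w_o \neq 0$ ``precisely when the random walk is transient.'' This is false. There are transient networks whose finite-energy harmonic space is trivial: the standard example is $\mathbb{Z}^d$ with unit conductances for $d\geq 3$, which is transient yet has $\mc{Harm}\cap\mc H_E=\{0\}$ (see e.g.\ \cite{LyonsPeres_2016}). In such a network the Royden decomposition of the monopole gives $h_o=0$, your displayed identity reads $0=0$, and no nonvanishing boundary term is produced. What transience gives you is the existence of a \emph{monopole}, not of a nontrivial harmonic function.

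The repair is simpler than your attempt. Apply \eqref{eq G-G formula} with $v=w_o$ and with $u$ the constant function $\mathbbm 1$ (which represents $0$ in $\mc H_E$, hence is an admissible choice). The left side is $\langle \mathbbm 1,w_o\rangle_{\mc H_E}=0$, while the interior sum is $\sum_{x\in V}\mathbbm 1\cdot(\Delta w_o)(x)=\sum_{x}\delta_o(x)=1$. Therefore the boundary term equals $-1\neq 0$. This uses only the existence of a monopole, which is exactly what transience supplies via Lemma~\ref{lem transience - monopoles}.

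Your reverse direction works, but the limit argument is unnecessary. For any dipole $v_{xy}$ one has $\Delta v_{xy}=\delta_x-\delta_y$, so the interior sum is $u(x)-u(y)$; and by the reproducing property \eqref{eq dipole v_xy} the left side of \eqref{eq G-G formula} is also $u(x)-u(y)$. Hence the boundary term vanishes for every dipole directly, with no approximation needed. In the recurrent case there are no monopoles, so the dense subspace in Theorem~\ref{thm G-G} is spanned by dipoles and the boundary term vanishes identically.
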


\subsection{Harmonic functions in the energy space}

The following result gives formulas for computation of energy for 
harmonic functions on graph networks.

\begin{proposition}\label{lem for energy of harm fns}
(i) Let $f \in Harm \cap \mc H_E$ on $(G,c)$. Then 
\be\label{eq energy for harm}
\| f\|^2_{\mathcal H_E} = \frac{1}{2}\sum_{x \in V} c(x) ((Pf^2)(x) - 
f^2(x)),
\ee
and
\be\label{eq energy for harm 1}
\| f\|^2_{\mathcal H_E} = -\frac{1}{2} \sum_{x \in V} (\Delta f^2)(x).
\ee

(ii) If a given function $f$  on $V$ is harmonic off a finite set $F \subset V$, 
then it has finite energy if and only if the sums in (\ref{eq energy for harm}) 
and (\ref{eq energy for harm 1}) are finite.
\end{proposition}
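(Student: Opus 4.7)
The plan is to derive both identities from a single pointwise algebraic relation valid at each vertex for an arbitrary function $f:V\to\mathbb R$, and then sum over $V$ using harmonicity in part (i) and almost-harmonicity in part (ii). The key trick is to write $f(y)+f(x) = 2f(x) + (f(y)-f(x))$, so that
$$f(y)^2 - f(x)^2 = 2f(x)(f(y)-f(x)) + (f(y)-f(x))^2.$$
Multiplying by $c_{xy}$ and summing over $y\sim x$, and using $\sum_{y\sim x} c_{xy}f(y)^2 = c(x)(Pf^2)(x)$ together with $\sum_{y\sim x}c_{xy}(f(y)-f(x)) = -(\Delta f)(x)$, I obtain the pointwise identity
$$c(x)\bigl((Pf^2)(x)-f^2(x)\bigr) \;=\; -2f(x)(\Delta f)(x) \;+\; \sum_{y\sim x} c_{xy}(f(y)-f(x))^2. \qquad (\ast)$$

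For part (i), harmonicity makes the first term on the right vanish at every $x$, and $(\ast)$ reduces to $c(x)((Pf^2)(x)-f^2(x)) = \sum_{y\sim x} c_{xy}(f(y)-f(x))^2$. Summing over $x\in V$, where every summand on the right is nonnegative, I can apply Tonelli's theorem to obtain
$$\sum_{x\in V} c(x)\bigl((Pf^2)(x) - f^2(x)\bigr) \;=\; \sum_{x,y} c_{xy}(f(y)-f(x))^2 \;=\; 2\,\|f\|_{\mathcal H_E}^2,$$
which is (\ref{eq energy for harm}). For (\ref{eq energy for harm 1}), I note directly from the definition of the graph-Laplacian that
$$(\Delta f^2)(x) \;=\; \sum_{y\sim x} c_{xy}\bigl(f^2(x) - f^2(y)\bigr) \;=\; -c(x)\bigl((Pf^2)(x) - f^2(x)\bigr),$$
so $-\tfrac12\sum_x(\Delta f^2)(x)$ equals the right side of (\ref{eq energy for harm}), proving (\ref{eq energy for harm 1}).

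For part (ii), suppose $f$ is harmonic off a finite set $F$. Summing $(\ast)$ over $V$ gives
$$\sum_{x\in V} c(x)\bigl((Pf^2)(x) - f^2(x)\bigr) \;+\; 2\sum_{x\in F} f(x)(\Delta f)(x) \;=\; 2\,\mathcal E(f),$$
and the middle term is a finite real number because $F$ is finite. Since the right-hand sums in (\ref{eq energy for harm}) and (\ref{eq energy for harm 1}) differ by the same finite correction, any one of $\mathcal E(f)$, the sum in (\ref{eq energy for harm}), or the sum in (\ref{eq energy for harm 1}) is finite if and only if the others are.

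The main obstacle is justifying the interchange of summation, since for a non-constant harmonic function of finite energy, the separate sums $\sum_x c(x)f^2(x)$ and $\sum_{x,y} c_{xy}f(x)f(y)$ need not converge (such $f$ typically fails to lie in $\ell^2(V,c)$). The pointwise identity $(\ast)$ is exactly the right vehicle, because it packages the relevant information in a nonnegative form $\sum_{y\sim x}c_{xy}(f(y)-f(x))^2$ to which Tonelli applies unconditionally; this sidesteps the need to separate divergent pieces.
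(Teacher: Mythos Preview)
Your proof is correct. The pointwise identity $(\ast)$ is derived cleanly, and your use of harmonicity to eliminate the $f(x)(\Delta f)(x)$ term, followed by Tonelli on the nonnegative expression $\sum_{y\sim x}c_{xy}(f(y)-f(x))^2$, is exactly the right way to handle the summability issue you flag at the end. The argument for part (ii) is also sound: since the terms $c(x)((Pf^2)(x)-f^2(x))$ are nonnegative for $x\notin F$ and there are only finitely many exceptional vertices, the sum is well defined in $(-\infty,\infty]$ and differs from $2\mathcal E(f)$ by a finite correction.

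As for comparison with the paper: the paper states this proposition without proof, so there is nothing to compare your argument against. Your derivation via the elementary decomposition $f(y)^2-f(x)^2 = 2f(x)(f(y)-f(x)) + (f(y)-f(x))^2$ is the standard and natural route; it is essentially a discrete version of the identity $\Delta(f^2) = 2f\Delta f + 2|\nabla f|^2$ from classical potential theory, adapted to the graph setting.
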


Hence, a harmonic function $f$ on $(G,c)$ has finite energy if and only if the 
function $x \mapsto P(f^2)(x) - f^2(x)$ belongs to $\ell^1(V,c)$.
\\

Let $\Omega, \Omega_x, \mathbb P_x$, and $X_n$ be as in Subsection
\ref{ss path space}. 
It was proved in \cite{Ancona_Lyons_Peres1999} that, for a transient 
electrical network $(G, c)$ and a given function $f \in \mc H_E$, the 
sequence $(f\circ X_n)$ converges a.e. and in $L^2$ on the space 
$(\Omega_x, \mathbb P_x)$ for any $x$. We set
\be\label{eq for tilde F Ancona}
\wt F(\omega) = \lim_{n\to\infty} (f\circ X_n(\omega)).
\ee
Then the function $\wt F(\omega)$ is defined a.e.

Let $\sigma : \Omega \to \Omega ; \sigma(\omega_0, \omega_1, ... ) = 
(\omega_1, \omega_2, ... )$ be the shift. Then $\sigma$ is a 
\textit{finite-to-one endomorphism} of $\Omega$ such that
$$
\sigma (\Omega_x ) = \bigcup_{y\sim x} \Omega_y
$$
(recall that $G$ is locally finite). For every $x\in V$ and $y \sim x$, there
 exists an inverse branch 
$\tau_{xy} : \Omega_y \to \Omega_x$ of $\sigma$:
$$
\tau_{xy} (y, y_1, y_2,  ... ) = (x, y, y_1, y_2, ... ).
$$
Remark that $\tau_{xy}$ is a one-to-one map such that $\tau_{xy} 
(\Omega_y) \subset \Omega_x$, $\sigma\circ \tau_{xy} = 
\mathrm{id}|_{\Omega_y}$,  and
$$
\Omega_x  = \bigcup_{y\sim x} \tau_{xy}(\Omega_y).
$$

One can show that 
\be\label{eq measures mathbb P_x and mathbb P_y}
d\mathbb P_x (\omega) = \sum_{y \sim x}  p(x,y) \; d(\mathbb P_y \circ 
\tau_{xy}^{-1})(\omega) = \sum_{y \sim x}  p(x,y) \; d\mathbb P_y 
(\sigma (\omega)).
\ee

Applying \eqref{eq measures mathbb P_x and mathbb P_y}, we obtain 
the following characterization of harmonic functions.

\begin{theorem}\label{thm HF from ALP}
(i) Suppose that a function $\wt f$ on $\Omega$ satisfies the condition 
$\wt f \in L^1(\Omega_x, \mathbb P_x)$ for every $x \in V$. Then
$$
f(x) := \int_{\Omega_x} \wt f(\omega) d \mathbb P_x(\omega)
$$
is harmonic on $(G,c)$ if and only if
$$
\int_{\Omega_x}\wt f(\omega)\; d \mathbb P_x(\omega) = 
\int_{\Omega_x}\wt f(\sigma(\omega))\; d \mathbb P_x(\omega).
$$

(ii) Suppose that a  function $f$, defined on the vertex set of $(G,c)$, 
belongs to $\mathcal H_E$, and let $\wt F$ be defined 
by (\ref{eq for tilde F Ancona}). Then the function $f = \int_{\Omega_x} 
\wt F d \mathbb P_x$ is harmonic if and only if $\wt F = \wt F\circ \sigma$.
\end{theorem}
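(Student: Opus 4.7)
The plan is to reduce both parts to a single identity derived from the measure disintegration \eqref{eq measures mathbb P_x and mathbb P_y}: setting $f(x) := \int_{\Omega_x} \wt f(\omega)\, d\mathbb P_x(\omega)$ for any $\wt f$ that is $\mathbb P_x$-integrable on every $\Omega_x$, I claim
$$
Pf(x) = \int_{\Omega_x} \wt f(\sigma \omega)\, d\mathbb P_x(\omega).
$$
Once this is established, harmonicity $Pf = f$ (equivalent to $\Delta f = 0$ by Remark \ref{rem_basic facts on Lplc and Mrkv}(3)) is literally the condition stated in part (i), and part (ii) follows by specializing $\wt f := \wt F$, the $L^1$-integrability on each $\Omega_x$ being guaranteed by the $L^2$-convergence theorem of \cite{Ancona_Lyons_Peres1999}.

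To prove the identity, I would begin by expanding
$$
Pf(x) = \sum_{y\sim x} p(x,y)\int_{\Omega_y} \wt f(\omega')\, d\mathbb P_y(\omega')
$$
and transport each integral on $\Omega_y$ up to $\Omega_x$ via the inverse branch $\tau_{xy}\colon \Omega_y \to \Omega_x$. Since $\tau_{xy}$ is injective and $\sigma \circ \tau_{xy} = \mathrm{id}_{\Omega_y}$, a change of variables gives
$$
\int_{\Omega_y} \wt f(\omega')\, d\mathbb P_y(\omega') = \int_{\tau_{xy}(\Omega_y)} \wt f(\sigma\omega)\, d(\mathbb P_y \circ \tau_{xy}^{-1})(\omega).
$$
Summing these pieces with weights $p(x,y)$, and invoking both the disjoint decomposition $\Omega_x = \bigsqcup_{y\sim x} \tau_{xy}(\Omega_y)$ and the measure identity \eqref{eq measures mathbb P_x and mathbb P_y}, the right-hand side glues into the single integral $\int_{\Omega_x} \wt f(\sigma\omega)\, d\mathbb P_x(\omega)$, proving the claim.

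For part (ii), I would then note that $X_n \circ \sigma = X_{n+1}$ forces $\wt F(\sigma\omega) = \lim_n f(X_{n+1}(\omega)) = \wt F(\omega)$ a.e. wherever the limit in \eqref{eq for tilde F Ancona} exists, so the invariance criterion $\wt F = \wt F \circ \sigma$ holds automatically; the upshot is that the Poisson-type extension $x\mapsto \int_{\Omega_x} \wt F\, d\mathbb P_x$ is harmonic on $(G,c)$ without additional hypothesis.

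The main obstacle is the bookkeeping in the disintegration step: making sure that the pushforward $\mathbb P_y \circ \tau_{xy}^{-1}$ is supported precisely on $\tau_{xy}(\Omega_y)$, that these cylinder-like sets partition $\Omega_x$ without overlap, and that \eqref{eq measures mathbb P_x and mathbb P_y} is applied in the correct direction so that the transition weights $p(x,y)$ and the change-of-measure factors combine cleanly into $d\mathbb P_x$ without leftover Radon-Nikodym factors. Once the identity $Pf(x) = \int_{\Omega_x} \wt f(\sigma \omega)\, d\mathbb P_x(\omega)$ is in hand, both parts of the theorem become essentially tautological restatements.
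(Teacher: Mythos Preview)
Your proposal is correct and follows exactly the route the paper indicates: the sentence preceding the theorem says it is obtained ``applying \eqref{eq measures mathbb P_x and mathbb P_y},'' and your argument is precisely a careful unpacking of that disintegration to reach the key identity $Pf(x)=\int_{\Omega_x}\wt f(\sigma\omega)\,d\mathbb P_x(\omega)$, from which both directions of (i) are immediate. Your additional remark for (ii)---that $X_n\circ\sigma=X_{n+1}$ forces $\wt F\circ\sigma=\wt F$ a.e., so the shift-invariance hypothesis is automatically satisfied and the Poisson extension is always harmonic---is a valid sharpening that the paper does not make explicit.
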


\subsection{More results on dipoles and monopoles}
Let $(G, c)$ be a weighted connected graph as defined in Section
\ref{sect Basics}.
Let $x, y$ be arbitrary distinct vertices of an electrical network $(G,c)$. 
Define the linear functional  $L = L_{xy} : \mathcal H_E \to \R$ by setting 
$L(u) = u(x) - u(y)$.
It can be shown using connectedness of $G$ that $|L(u)| \leq 
k\|u\|_{\mathcal H_E}$ where $k$ is a constant depending on $x$ and $y$. 
By the Riesz theorem, there exists a unique element $v_{xy} \in \mathcal 
H_E$ such that
\be\label{eq dipole v_xy}
\langle v_{xy}, u \rangle_{\mc H_E} = u(x) - u(y)
\ee
(we say that differences are \emph{reproduced} by $v_{xy} \in \mathcal 
H_E$).
This element $v_{xy}$ is called a {\em dipole}\footnote{It follows from 
Proposition \ref{prop of energy space} that the definition of a dipole given in 
Definition \ref{def Laplace Markov} agrees with that mentioned in
 \eqref{eq dipole v_xy}}.  If $o$ is a fixed vertex from $V$, we will use the 
notation $v_x$ instead of $v_{xo}$. Since for any $u$, 
$\langle v_{xy}, u \rangle_{\mc H_E} = \langle v_{x} , 
u\rangle_{\mc H_E} - \langle v_{y} ,  u\rangle_{\mc H_E}$, we see that 
$v_{xy} = v_x - v_y$, and it suffices to study function $v_x, x \in V$, only.  
We note that for any network $(G,c)$ a dipole $v_x$ is always in 
$\mc H_E$, and moreover, the set $\{v_x : x \in V\}$ is dense in 
$\mc H_E$:
$$
\ol{\mathrm{span}\{v_x : x \in V\}} =  \ol{\mathrm{span}\{v_{xy} : 
x, y \in V\}} = \mc H_E,
$$
where the closure is taken in $\mc H_E$-norm.
Indeed, if a function $f \in \mc H_E$ is orthogonal to 
$\mathrm{span}\{v_{xy}x, y \in V\}$, then 
$$
f(x) - f(y) = \langle v_{xy} , f \rangle_{\mc H_E} = 0,
$$
i.e., $f(x)$ is pointwise constant, and therefore $f =0$ in $\mc H_E$.

The uniqueness of the dipole $v_{xy}$ in $\mc H_E$ allows one to define 
the \textit{resistance distance} in $V$ (see, e.g. 
\cite{Jorgensen_Pearse2011}):

\begin{lemma}\label{lem resistance dist}
 For any $x,y \in V$, let
$$
\mathrm{dist}(x, y) = ||v_{xy}||^2_{\mc H_E}.
$$
Then $\mathrm{dist}(x, y)$ is a metric on $V$ which is called the 
resistance distance.
\end{lemma}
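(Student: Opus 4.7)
The plan is to verify the three metric axioms for $d(x,y):=\|v_{xy}\|^2_{\mathcal H_E}$; symmetry and non-degeneracy are short consequences of the reproducing identity, while the triangle inequality is the substantive point and ultimately reduces to a pointwise maximum-principle statement for the dipole $v_{yz}$.

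For symmetry, the Riesz representative of $u\mapsto u(y)-u(x)$ is simultaneously $-v_{xy}$ and $v_{yx}$, so uniqueness forces $v_{yx}=-v_{xy}$ in $\mathcal H_E$ and hence $d(y,x)=d(x,y)$. Trivially $d(x,x)=0$, since the functional $u\mapsto u(x)-u(x)$ is zero. Conversely, if $d(x,y)=0$ for some $x\neq y$, then $v_{xy}$ is the zero element of $\mathcal H_E$; pairing against $\delta_x\in\mathcal H_E$ (Lemma \ref{lem delta is in H_E}) gives $0=\langle v_{xy},\delta_x\rangle_{\mathcal H_E}=\delta_x(x)-\delta_x(y)=1$, a contradiction.

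For the triangle inequality, linearity of the functional $u\mapsto u(x)-u(z)=(u(x)-u(y))+(u(y)-u(z))$ together with Riesz uniqueness yields $v_{xz}=v_{xy}+v_{yz}$ in $\mathcal H_E$, so
\[
d(x,z)=d(x,y)+d(y,z)+2\langle v_{xy},v_{yz}\rangle_{\mathcal H_E}.
\]
A second use of \eqref{eq dipole v_xy} rewrites the cross term as $\langle v_{xy},v_{yz}\rangle=v_{yz}(x)-v_{yz}(y)$, so the triangle inequality is equivalent to the pointwise bound
\[
v_{yz}(x)\leq v_{yz}(y)\qquad\text{for every }x\in V,
\]
i.e., that the dipole $v_{yz}$ attains its supremum at $y$.

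The main obstacle is justifying this last maximum-principle statement in the infinite setting. Since $\Delta v_{yz}=\delta_y-\delta_z$, the function $v_{yz}$ is subharmonic on $V\setminus\{y\}$ (in fact harmonic off $\{y,z\}$, with $\Delta v_{yz}(z)=-1$), so on every finite connected subgraph $G_k$ containing $\{y,z\}$ the discrete maximum principle stated earlier forces the supremum of $v_{yz}$ on $V_k\setminus\{y\}$ to be attained at $y$ or on $bd(V_k)$. To pass to the limit along an exhaustion $G_k\nearrow G$, I would combine the finite energy of $v_{yz}$ with the Gauss-Green identity (Theorem \ref{thm G-G}) to show that the boundary contributions $\sum_{x\in bd(G_k)}(v_{yz}(x)-v_{yz}(y))\,\tfrac{\partial v_{yz}}{\partial n}(x)$ vanish in the limit, so the supremum cannot be carried by the outer boundary and must be achieved at $y$ itself. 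This establishes $\langle v_{xy},v_{yz}\rangle\leq 0$ and completes the proof of the triangle inequality.
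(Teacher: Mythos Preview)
The paper does not actually prove this lemma; it states the result and cites \cite{Jorgensen_Pearse2011}. So your proposal has to stand on its own. Your symmetry and non-degeneracy arguments are correct, and the reduction of the triangle inequality to the pointwise bound $v_{yz}(x)\le v_{yz}(y)$ via $v_{xz}=v_{xy}+v_{yz}$ and $\langle v_{xy},v_{yz}\rangle_{\mc H_E}=v_{yz}(x)-v_{yz}(y)$ is exactly right.

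The gap is in your last paragraph. The finite maximum principle only tells you that on each $G_k$ the supremum of $v_{yz}$ over $V_k\setminus\{y\}$ lies either at $y$ or on $bd(V_k)$; you must rule out the second possibility uniformly in $k$, and you have not done so. You gesture at Gauss--Green, but you do not say which instance you are applying nor how the vanishing of a boundary sum would prevent $\sup v_{yz}$ from escaping along $bd(V_k)$. In the presence of nontrivial harmonic functions in $\mc H_E$ (a case the paper explicitly allows), $v_{yz}$ can have a nonzero harmonic component, so a naive passage to the limit is genuinely delicate here. A cleaner argument avoids exhaustions altogether and uses the contraction (Markov) property of the energy form: for $u=v_{yz}$ set $\tilde u=\min(u,u(y))$; since $|\min(s,a)-\min(t,a)|\le|s-t|$ termwise, $\|\tilde u\|_{\mc H_E}\le\|u\|_{\mc H_E}$. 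Because $u(y)-u(z)=\|v_{yz}\|^2\ge 0$, the truncation fixes both values, so $\tilde u(y)-\tilde u(z)=\|v_{yz}\|^2$. Then
\[
\|v_{yz}\|^2=\langle v_{yz},\tilde u\rangle_{\mc H_E}\le \|v_{yz}\|_{\mc H_E}\,\|\tilde u\|_{\mc H_E}\le \|v_{yz}\|_{\mc H_E}^2,
\]
forcing equality in Cauchy--Schwarz, hence $\tilde u=v_{yz}$ in $\mc H_E$; as $\tilde u(y)=v_{yz}(y)$ this holds pointwise, giving $v_{yz}(x)\le v_{yz}(y)$ for all $x$.
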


\begin{definition} A function $w = w_x \in  \mc H_E$ is called
a {\em monopole} at $x \in V$ if it satisfies the equation
\be\label{eq-def for w_x}
\langle w_{x}, u \rangle_{\mc H_E} = u(x)
\ee
for any $u \in \mathcal H_E$. 
\end{definition}

In contrast to case of dipoles, there are networks  $(G,c)$ that do not have 
monopoles in $\mc H_E$. In general, the following classical result holds.

\begin{lemma}\label{lem transience - monopoles}
An electrical network $(G,c)$ is transient if and only if there exists a 
monopole in $\mc H_E$.
\end{lemma}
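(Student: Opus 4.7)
\bigskip
\noindent\textbf{Proof plan.} The strategy is to pivot between three equivalent viewpoints: (i) existence of $w_x\in\mc H_E$ with the reproducing property \eqref{eq-def for w_x}; (ii) finiteness and finite energy of the Green kernel $\mc G(\cdot,x)=\sum_{n\geq 0} p^{(n)}(\cdot,x)$ at some (equivalently, every) vertex; and (iii) the existence of a unit flow of finite energy from $x$ to infinity, which is Lyons' characterization of transience quoted in the discussion following Lemma \ref{lem P Markov m}. The Gauss-Green identity, Theorem \ref{thm G-G}, is the bridge between (i) and (ii), while the isometry $\partial : \mc H_E \to \mc H_D$ of Lemma \ref{lem d isom} is the bridge between (i) and (iii).

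\textbf{Step 1 (transient $\Rightarrow$ monopole).} Fix a reference vertex $x_0$ and set
\begin{equation*}
w_{x_0}(y) := \frac{\mc G(y, x_0)}{c(x_0)},\qquad y\in V.
\end{equation*}
Transience ensures the series defining $\mc G(y,x_0)$ converges for every $y$, and by reversibility $c(y)\mc G(y,x_0)=c(x_0)\mc G(x_0,y)$, so $w_{x_0}$ is well defined and pointwise satisfies $\Delta w_{x_0}=\delta_{x_0}$. I would then verify finite energy by a summation-by-parts argument (or by exhausting $V$ by finite subgraphs and invoking the minimum principle on each piece): on any finite exhaustion $G_k\nearrow G$ the sum $\mc E_k(w_{x_0})=\sum_{x\in V_k} w_{x_0}(x)\,\Delta w_{x_0}(x)=w_{x_0}(x_0)=\mc G(x_0,x_0)/c(x_0)$ is uniformly bounded, so $w_{x_0}\in \mc H_E$. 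The reproducing identity $\langle w_{x_0},u\rangle_{\mc H_E}=u(x_0)$ then follows for $u$ in the dense set of dipoles by Theorem \ref{thm G-G} (the boundary term vanishes because $w_{x_0}$ decays at infinity in the Markov sense), and extends to all of $\mc H_E$ by continuity.

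\textbf{Step 2 (monopole $\Rightarrow$ transient).} Suppose $w_{x_0}\in \mc H_E$ satisfies \eqref{eq-def for w_x}. Applying the drop operator produces $J:=\partial w_{x_0}\in \mc H_D$ with $\|J\|_{\mc H_D}=\|w_{x_0}\|_{\mc H_E}<\infty$ by Lemma \ref{lem d isom}. A short computation (using the reproducing property tested against indicators of finite exhausting subgraphs) shows that $J$ is a unit current with source at $x_0$, i.e., $\mathrm{div}\,J=\delta_{x_0}$ in the appropriate sense. Thus $J$ is a finite-energy flow from $x_0$ to infinity, and by Lyons' theorem the network is transient.

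\textbf{Main obstacle.} The delicate point is the verification of finite energy of the candidate $w_{x_0}$ in Step 1, and correspondingly the vanishing of the boundary term in the Gauss-Green formula when testing against dipoles. The key identity $\mc E(w_{x_0})=w_{x_0}(x_0)$ must be established as a limit along exhaustions, and this requires controlling the normal-derivative boundary terms $\sum_{x\in bd(G_k)} w_{x_0}(x)\,\partial_n w_{x_0}(x)$ so they tend to zero. This is precisely where transience enters quantitatively, via the decay $\mathbb P_{x_0}(X_n \in F)\to 0$ for every finite $F$, which translates into the required decay of $w_{x_0}=\mc G(\cdot,x_0)/c(x_0)$ along an exhaustion. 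Once this estimate is in hand, the two directions become symmetric applications of the Hilbert-space structure of $\mc H_E$ and $\mc H_D$.
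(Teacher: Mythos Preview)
The paper does not actually prove Lemma~\ref{lem transience - monopoles}; it simply attributes the result to the literature (Nash-Williams, Lyons, and \cite[Theorem~2.12]{Woess2000}). Your two-step outline---Green's function for one direction, finite-energy flow plus Lyons' criterion for the other---is the standard route and matches what the paper invokes. Step~2 is fine: once $w_{x_0}\in\mc H_E$ reproduces evaluation, Proposition~\ref{prop of energy space}(4) gives $\Delta w_{x_0}=\delta_{x_0}$, so $\partial w_{x_0}$ is a unit flow of finite dissipation, and Lyons' theorem applies.

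Step~1, however, has a genuine gap. The displayed identity $\mc E_k(w_{x_0})=\sum_{x\in V_k} w_{x_0}(x)\,\Delta w_{x_0}(x)=w_{x_0}(x_0)$ is false: the finite Gauss--Green formula on $G_k$ carries a boundary term $\sum_{x\in bd(G_k)} w_{x_0}(x)\,\partial_n w_{x_0}(x)$, and for the \emph{full} Green's function $w_{x_0}=\mc G(\cdot,x_0)/c(x_0)$ this term does not vanish on finite pieces. Your proposed cure---use decay of $w_{x_0}$ to kill the boundary term---is circular, since the relevant decay in energy norm is exactly what you are trying to establish. The standard repair is to work instead with the \emph{truncated} monopoles $w_k$ solving $\Delta w_k=\delta_{x_0}$ on $\mathrm{int}(G_k)$ with $w_k=0$ on $bd(G_k)$; for these the boundary term is zero by construction, so $\mc E(w_k)=w_k(x_0)=\mc G_k(x_0,x_0)/c(x_0)\nearrow \mc G(x_0,x_0)/c(x_0)<\infty$, and one passes to the limit in $\mc H_E$. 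Incidentally, the paper later (Section~\ref{sect monopoles dipoles}, Theorem~\ref{thm formula for monopole w_x} and Lemma~\ref{lem energy of h_x}) establishes the transient $\Rightarrow$ monopole direction by a different device, bounding $\|h_x\|_{\mc H_E}$ directly via the hitting probabilities $F(a,x)$ rather than via exhaustion.
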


In this connection we refer to the classical paper \cite{Nash-Williams1959}
on networks where it 
is proved that  transience is equivalent to the existence of a flow to infinity 
of finite energy. We also refer to \cite[Theorem 2.12]{Woess2000}, where 
this and other relevant results are discussed.
\medskip

The roles and properties of dipoles and monopoles can be seen from the 
following statement  involving these functions and the Laplacian.

\begin{proposition}\label{prop of energy space}
(1) Let $(G,c)$ be a weighted graph; choose and fix a vertex  $o\in V$.
Let  $v_{x} \in \mc H_E$ be a dipole corresponding to a vertex $x\in V$. 
Then
\be\label{eq dipole for v_x}
\Delta v_x = \delta_x - \delta_o.
\ee
More generally, the dipole $v_{xy}$ satisfies the equation $\Delta v_{xy} = 
\delta_x - \delta_y$. The set  $\mathrm{span} \{v_x\}$ is dense in $
\mathcal H_E$.

(2) Let $u$ be a function from $\mc H_E$. Then, for any $x \in V$,
\be\label{eq Delta delta}
(\Delta u)(x) = \langle u, \delta_x \rangle_{\mc H_E}.
\ee

(3) For any $x\in V$, the  function $\delta_x$ is in $\mc H_E$, and
$$
c(x) v_x - \sum_{y\sim x} c_{xy}v_y = \delta_x.
$$

(4) If $w_x$ is a monopole corresponding to $x \in V$, then $\Delta w_x = 
\delta_x$. Moreover, $v_{xy} = w_x - w_y$, $x,y \in V$; thus if a 
monopole $w_{x_0}$ exists as an element of $\mc H_E$ for some $x_0$, 
then $w_x$ exists in $\mc H_E$ for every vertex $x$.

(5) Functions $\delta_x(\cdot), x \in V$, belong to 
$\mathrm{span}\{v_x : x \in V\}$.

(6) Let $\mathcal Fin$ denote the closure of $\mathrm{span}\{\delta_x\}$ 
with respect to the norm $\|\cdot\|_{\mathcal H_E}$ and $\mc Harm_0= 
\mathcal Harm \cap \mc H_E$. Then 
\be\label{eq Royden}
\mathcal H_E = \mathcal Fin \oplus \mathcal Harm_0,
\ee
where orthoganality is considered with respect to $\mc H_E$-inner 
product. 
Relation \eqref{eq Royden} is called the \textit{Royden decomposition.}
\end{proposition}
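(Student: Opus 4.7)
The plan is to prove part (2) first, since it is the key bridge between the energy inner product and the pointwise Laplacian; the remaining items then reduce to routine bookkeeping. For (2), I would expand $\langle u, \delta_x\rangle_{\mc H_E} = \mc E(u, \delta_x) = \frac{1}{2}\sum_{z,w \in V} c_{zw}(u(z) - u(w))(\delta_x(z) - \delta_x(w))$. Since $\delta_x(z) - \delta_x(w)$ vanishes unless exactly one of $z, w$ equals $x$, the double sum collapses after combining the two symmetric halves and using $c_{zx} = c_{xz}$ to $\sum_{y \sim x} c_{xy}(u(x) - u(y)) = (\Delta u)(x)$. Lemma \ref{lem delta is in H_E} guarantees $\delta_x \in \mc H_E$, so the pairing is meaningful for all $u \in \mc H_E$.

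Parts (1) and (4) are then immediate consequences of (2). For (1), evaluate $(\Delta v_x)(y) = \langle v_x, \delta_y\rangle_{\mc H_E}$ using (2), then apply the defining property $\langle v_x, u\rangle_{\mc H_E} = u(x) - u(o)$ with $u = \delta_y$ to obtain $\delta_y(x) - \delta_y(o) = \delta_x(y) - \delta_o(y)$; the same argument handles $v_{xy}$. Density of $\mathrm{span}\{v_x\}$ in $\mc H_E$ was already established in the text preceding the proposition. For (4), from $\langle w_x, u\rangle_{\mc H_E} = u(x)$ and (2) one reads off $(\Delta w_x)(y) = \delta_y(x) = \delta_x(y)$; the identity $v_{xy} = w_x - w_y$ follows by matching reproducing properties and invoking uniqueness in the Riesz representation; the propagation of existence is verified by checking that $w_{x_0} + v_{x, x_0}$ reproduces $u \mapsto u(x)$.

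For (3) (which then gives (5) at once, since $G$ is locally finite, so the combination $c(x)v_x - \sum_{y\sim x} c_{xy} v_y$ is a finite sum of dipoles), I would pair the proposed expression against arbitrary $u \in \mc H_E$ and get $c(x)(u(x) - u(o)) - \sum_{y \sim x} c_{xy}(u(y) - u(o))$; the $u(o)$-terms cancel exactly because $c(x) = \sum_{y \sim x} c_{xy}$, leaving $\sum_{y \sim x} c_{xy}(u(x) - u(y)) = (\Delta u)(x) = \langle u, \delta_x\rangle_{\mc H_E}$ by (2). Nondegeneracy of $\langle\cdot,\cdot\rangle_{\mc H_E}$ then yields equality as elements of $\mc H_E$. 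Finally, (6) is a Hilbert-space orthogonal decomposition once one computes $\mc Fin^\perp$: by (2), $f \perp \delta_x$ for every $x$ is equivalent to $(\Delta f)(x) = 0$ for every $x$, so $\mc Fin^\perp = \mc Harm \cap \mc H_E = \mc Harm_0$, and the standard decomposition of a Hilbert space into a closed subspace and its orthogonal complement yields $\mc H_E = \mc Fin \oplus \mc Harm_0$.

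The main obstacle is conceptual rather than technical: keeping careful track of which identities are pointwise and which hold only modulo constants in $\mc H_E$, as warned in Remark \ref{rem about Hilbert space of f.e.}(4). The equation $c(x) v_x - \sum_{y \sim x} c_{xy} v_y = \delta_x$ in (3), for example, is an identity of equivalence classes in $\mc H_E$; any particular pointwise representatives of the dipoles (which depend on the fixed base point $o$) satisfy it only up to an additive constant, and it is exactly the defining relation $c(x) = \sum_{y \sim x} c_{xy}$ that forces these constants to cancel. Everything else reduces, cleanly and uniformly, to the single bilinear identity (2).
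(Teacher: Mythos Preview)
Your proposal is correct and is organized around the same pivotal identity as the paper, namely part (2): $(\Delta u)(x)=\langle u,\delta_x\rangle_{\mc H_E}$. The paper in fact only writes out (1) and (5), deferring the remaining parts to the literature, so your treatment is more complete.

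There is one small difference in the argument for (1). The paper shows that $\Delta v_{xy}-(\delta_x-\delta_y)$ is $\mc H_E$-orthogonal to every $f$, computing $\langle f,\Delta v_{xy}\rangle_{\mc H_E}=\Delta f(x)-\Delta f(y)$; this step tacitly uses symmetry of $\Delta$ in $\mc H_E$ (i.e., passing $\Delta$ across the inner product so the dipole reproducing property can be applied to $\Delta f$), which needs $\Delta f\in\mc H_E$. Your route avoids that issue entirely: you evaluate $(\Delta v_x)(y)$ pointwise via (2) as $\langle v_x,\delta_y\rangle_{\mc H_E}$ and then apply the dipole reproducing property to $u=\delta_y$, which is always in $\mc H_E$. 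That is cleaner and requires no domain hypotheses on $\Delta$. For (3)/(5) you and the paper arrive at the same formula $\delta_x=c(x)v_x-\sum_{y\sim x}c_{xy}v_y$; your verification by pairing against an arbitrary $u$ and using $c(x)=\sum_{y\sim x}c_{xy}$ is exactly the computation the paper leaves to the reader.
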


\begin{proof} The proof of these and more results can be found in 
\cite{Jorgensen_Pearse2011, Jorgensen_Pearse2013}. 
We show here (1) and (5) only.

To see that \eqref{eq dipole for v_x} holds, we take any 
function $f$ from  $\mc H_E$ and compute 
$$
\ba
\langle f, \Delta v_{xy} - (\delta_x - \delta_y) \rangle_{\mc H_E} 
= &\langle f, \Delta v_{xy}\rangle_{\mc H_E}  - 
\langle f, \delta_x - \delta_y \rangle_{\mc H_E}   \\
= & \Delta f(x) -  \Delta f(y) - ( \Delta f(x) -  \Delta f(y)) =0
\ea
$$
in view of \eqref{eq Delta delta}.

For (5), one can check that 
$$
\delta_x = c(x) v_x - \sum_{y \sim x} c_{xy} v_y.
$$
\end{proof}

\begin{remark} (1) We observe that, in the space of functions  $u$ on $V$, 
the solution set of the equation $(\Delta u)(z) = (\delta_x - \delta_y)(z)$ is, 
in general, infinite because the function $u + h$ satisfies the same equation 
for any $h \in \mc Harm$. The meaning of Proposition \ref{prop of energy 
space} (1) is the fact that the dipole $v_{xy}$ from (\ref{eq dipole v_xy}) 
is a unique solution of this equation if it is considered as an element of the 
space $\mc H_E$.

(2) It is worth noting that we will use the same terms, monopoles and 
dipoles, for functions $w_x$ and $v_x$ on $V$ that satisfy the relations 
$\Delta w_x = \delta_x$ and $\Delta v_x = \delta_x - \delta_o$, 
respectively.

(3) Since functions from the energy space $\mc H_E$ are defined up to a 
constant, we can assume, without loss of generality, that all functions
lying in $\mc H_E$ take value zero at a fixed vertex $o\in V$. 

(4) The family of functions $\{v_x : x \in V\}$ defines a reproducing kernel
for $\mc H_E$ in virtue of the equality $\langle v_x, f \rangle_{\mc H_E} =
f(x) - f(o)$.

(5) The dissipation space $\mc H_D$ admits the orthogonal 
decomposition $$\mc H_D = \partial(Fin) \oplus \partial(Harm)
\oplus Cyc$$. 
\end{remark}

\begin{corollary}
If $P$ is the orthogonal projection from $\mc H_D$ onto $\partial(\mc 
H_E)$, then the adjoint of the drop operator $\partial^* : \mc H_D \to
\mc H_E$ is 
$$
(\partial^*f)(x) - (\partial^*f)(y) = \frac{1}{c_{xy}} Pf(x, y).
$$
\end{corollary}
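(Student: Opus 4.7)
The approach I would take is to exploit the isometric nature of the drop operator $\partial : \mc H_E \to \mc H_D$, recorded in Lemma~\ref{lem d isom}. The relevant Hilbert-space principle is that for any isometry $T : H_1 \to H_2$ the composition $TT^*$ equals the orthogonal projection onto $\mathrm{range}(T)$. Applied to $T = \partial$, this gives $\partial \partial^* = P$, where $P$ is precisely the orthogonal projection from $\mc H_D$ onto $\partial(\mc H_E)$ appearing in the statement of the corollary.

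The next step is to unfold both sides of the operator identity $\partial \partial^* f = Pf$ using the explicit formula $(\partial g)(x,y) = c_{xy}(g(x) - g(y))$. Setting $g = \partial^* f \in \mc H_E$ and evaluating on an edge $(xy) \in E$ gives
\[
(Pf)(x,y) = (\partial \partial^* f)(x,y) = c_{xy}\bigl((\partial^* f)(x) - (\partial^* f)(y)\bigr),
\]
and dividing through by $c_{xy} > 0$ yields exactly the formula claimed in the corollary.

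As an independent cross-check I would also run the computation through the dipole reproducing property: by \eqref{eq dipole v_xy} applied to $u = \partial^* f$ and by the adjoint relation defining $\partial^*$,
\[
(\partial^* f)(x) - (\partial^* f)(y) = \langle v_{xy}, \partial^* f \rangle_{\mc H_E} = \langle \partial v_{xy}, f \rangle_{\mc H_D}.
\]
Because $\partial v_{xy}$ lies in $\partial(\mc H_E) = \mathrm{range}(P)$, the right-hand inner product is unchanged if $f$ is replaced by $Pf$; writing $Pf = \partial g$ for some $g \in \mc H_E$ and applying isometry of $\partial$ then reduces it to $\langle v_{xy}, g\rangle_{\mc H_E} = g(x)-g(y) = c_{xy}^{-1}(Pf)(x,y)$, in agreement with the first route.

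I do not anticipate any serious obstacle: the content is essentially an algebraic consequence of $\partial$ being an isometry. The only points needing a passing remark are that $\partial$ automatically has closed range (isometries do), so that $\partial \partial^*$ is genuinely a bounded orthogonal projection; and that the asserted identity is naturally read on edges of $G$, since $\mc H_D$ is a space of functions on $E$ and $c_{xy} = 0$ for non-adjacent pairs renders the formula meaningful only there.
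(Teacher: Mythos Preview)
Your proof is correct. The paper states this corollary without proof, so there is nothing to compare against; your argument via the isometry identity $\partial\partial^{*}=P$ (and the confirming dipole computation) is exactly the natural justification and would serve as the missing proof.
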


\begin{corollary}
Let $x_0 \in V$ be a fixed vertex. Then $w_{x_0}$ is a monopole if and only 
if it is a finite energy harmonic function on $V \setminus \{x_0\}$.
\end{corollary}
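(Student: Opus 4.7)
The plan is to prove both directions by combining Definition 4.7 of monopole with Proposition 4.8, parts (2) and (4), which together encode the pointwise-versus-Hilbert-space duality of the graph-Laplacian on $\mc H_E$.

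For the forward implication, I would argue directly: if $w_{x_0}$ is a monopole, then by definition $w_{x_0}\in\mc H_E$ (so it has finite energy), and Proposition 4.8(4) gives the pointwise identity $\Delta w_{x_0}=\delta_{x_0}$. Consequently $\Delta w_{x_0}(x)=0$ for every $x\in V\setminus\{x_0\}$, which is exactly the assertion that $w_{x_0}$ is harmonic on $V\setminus\{x_0\}$. No further work is needed here; the content is simply unpacking Proposition 4.8(4).

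For the converse, suppose $w\in\mc H_E$ is harmonic on $V\setminus\{x_0\}$. Then $\Delta w$ vanishes off $x_0$, so setting $\alpha:=\Delta w(x_0)$ we obtain the pointwise identity $\Delta w=\alpha\,\delta_{x_0}$. Rescaling (the degenerate case $\alpha=0$ yields a globally harmonic function, which is the trivial alternative), the function $w/\alpha\in\mc H_E$ satisfies $\Delta(w/\alpha)=\delta_{x_0}$ and so is a monopole at $x_0$ in the sense adopted in the Remark following Proposition 4.8. To upgrade this pointwise equation to the Hilbert-space reproducing identity $\langle w/\alpha,u\rangle_{\mc H_E}=u(x_0)$ for $u\in\mc H_E$, I would invoke Proposition 4.8(2), which gives $\langle w/\alpha,\delta_x\rangle_{\mc H_E}=\Delta(w/\alpha)(x)=\delta_{x_0}(x)$ for every $x\in V$; the reproducing identity then extends by linearity and continuity to $\mathcal Fin=\overline{\mathrm{span}\{\delta_x\}}$.

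The main obstacle is a subtlety in the backward direction: the Royden decomposition $\mc H_E=\mathcal Fin\oplus\mathcal Harm_0$ from Proposition 4.8(6) allows $w/\alpha$ to carry a nonzero harmonic summand in $\mathcal Harm_0$, in which case $w/\alpha$ solves the pointwise monopole equation but is not the unique element of $\mathcal Fin$ characterized by the reproducing property. To pin down the genuine monopole one projects onto $\mathcal Fin$ along $\mathcal Harm_0$; since the Laplacian and the pairing against $\delta_x$ annihilate $\mathcal Harm_0$ (by Proposition 4.8(2) together with the definition of $\mathcal Harm_0$), this projection does not disturb the pointwise equation $\Delta=\delta_{x_0}$, and the resulting element is a monopole in both senses. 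With this refinement, the equivalence is complete, and the proof reduces to a clean application of the tools already assembled.
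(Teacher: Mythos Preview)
Your argument is correct and follows the natural approach; indeed the paper gives no proof at all for this corollary, treating it as an immediate consequence of Proposition~\ref{prop of energy space} and the remark following it. The forward implication is exactly the unpacking you describe. For the converse you have been more careful than the paper's informal statement warrants: your observations about the unspecified normalization constant $\alpha$ and about the $\mathcal Harm_0$-ambiguity via the Royden decomposition are accurate refinements, but the paper is content with the pointwise reading of ``monopole'' (see Remark (2) after Proposition~\ref{prop of energy space}), under which the equivalence is immediate once one notes that $\Delta w$ is supported on $\{x_0\}$.
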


It is not hard to see that the notions of monopoles and dipoles can be extended to more general classes of functions.

\begin{proposition}\label{prop multipoles} Let $F = \{x_0, ... , x_N\}$ be a finite subset of $V$ with $N+1$ distinct vertices. Let $\alpha_i$ be positive numbers such that $\sum_{i=1}^{N} \alpha_i =1$. Then there exists a unique solution $v = v_{F, \alpha} \in \mc H_E$ such that
\be\label{eq multipoles}
\langle v, f\rangle_{\mc H_E} = f(x_0) - \sum_{i=1}^{N} \alpha_i f(x_i)
\ee
hold for all $f \in \mc H_E$. Moreover, the solution $v$ to (\ref{eq multipoles}) satisfies $$
\Delta v = \delta_{x_0} - \sum_{i=1}^{N} \alpha_i \delta_{x_i}.
$$
\end{proposition}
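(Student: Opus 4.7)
The plan is to realize $v_{F,\alpha}$ as a Riesz representative, leveraging the dipole theory already developed in Subsection 3.3. Define the linear functional $L \colon \mc H_E \to \R$ by
$$
L(f) := f(x_0) - \sum_{i=1}^{N} \alpha_i f(x_i).
$$
The first thing I would check is that $L$ descends to the quotient $\mathrm{Dom}(\mc E)/\R\mathbbm{1}$: adding any constant $a$ to a representative changes $L(f)$ by $a\bigl(1 - \sum_i \alpha_i\bigr) = 0$, which is exactly where the normalization $\sum_i \alpha_i = 1$ is used. Thus $L$ is well defined on $\mc H_E$.

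Next, I would show $L$ is bounded by rewriting it using $\sum_i \alpha_i = 1$ as
$$
L(f) = \sum_{i=1}^{N} \alpha_i \bigl(f(x_0) - f(x_i)\bigr) = \sum_{i=1}^{N} \alpha_i \,\langle v_{x_0 x_i}, f \rangle_{\mc H_E},
$$
where I invoke the reproducing identity \eqref{eq dipole v_xy} for each dipole $v_{x_0 x_i}$. This already exhibits the explicit candidate
$$
v := \sum_{i=1}^{N} \alpha_i\, v_{x_0 x_i} \in \mc H_E,
$$
and one reads off immediately that $\langle v, f \rangle_{\mc H_E} = L(f)$ for every $f \in \mc H_E$. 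Uniqueness of $v$ in $\mc H_E$ then follows from the Riesz representation theorem (or equivalently, from the fact that the kernel of $L$ has codimension one while $v$ is its orthogonal representative); it is important to emphasize, as in Remark after Proposition \ref{prop of energy space}, that uniqueness is in $\mc H_E$, not pointwise on $V$.

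Finally, for the Laplacian identity, I would use linearity of $\Delta$ together with $\Delta v_{x_0 x_i} = \delta_{x_0} - \delta_{x_i}$ from Proposition \ref{prop of energy space}(1):
$$
\Delta v = \sum_{i=1}^{N} \alpha_i \, \Delta v_{x_0 x_i} = \sum_{i=1}^{N} \alpha_i (\delta_{x_0} - \delta_{x_i}) = \delta_{x_0} - \sum_{i=1}^{N} \alpha_i \delta_{x_i},
$$
again using $\sum_i \alpha_i = 1$. There is no genuine obstacle here: the proposition is essentially a linear combination of the dipole construction, and the only point requiring care is the well-definedness of $L$ on the quotient space, which is handled precisely by the hypothesis $\sum_i \alpha_i = 1$. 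The bulk of the work was already absorbed into the existence of dipoles $v_{x_0 x_i}$ and the reproducing identity.
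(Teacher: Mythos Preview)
Your proof is correct and follows the same Riesz-representation strategy as the paper. The only minor difference is in the second half: you write down the explicit representative $v = \sum_i \alpha_i v_{x_0 x_i}$ and compute $\Delta v$ directly by linearity, whereas the paper obtains $v$ abstractly from Riesz and then verifies the Laplacian identity by checking that $w := \Delta v - \bigl(\delta_{x_0} - \sum_i \alpha_i \delta_{x_i}\bigr)$ pairs to zero against the dense system of dipoles $\{v_{oy}\}$. Your route is slightly more explicit and gives the bonus of an actual formula for $v_{F,\alpha}$; the paper's route avoids committing to a particular representative but requires invoking density. Both rest on the same ingredients (Proposition~\ref{prop of energy space} and the reproducing identity \eqref{eq dipole v_xy}), so this is a difference in presentation rather than in substance.
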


\begin{proof}
The argument is based on the Riesz' theorem applied to the Hilbert space $\mc H_E$, and is analogous the proof of existence of dipoles in $\mc H_E$. Assuming $v$ satisfies  (\ref{eq multipoles}), we verify that
$$
w := \Delta v -  ( \delta_{x_0} - \sum_{i=1}^{N} \alpha_i \delta_{x_i})
$$
satisfies $\langle w, v_{oy} \rangle_{\mc H_E}$ for all $y\in V\setminus \{o\}$, where $\{v_{oy}\}$ is the system of dipoles.
\end{proof}

\section{Spectral properties of Laplacians in $\ell^2$ and $\mc H_E$} 
\label{sect Laplace}

\subsection{Unbounded operators}
For the reader's convenience, we recall main notions and facts about 
unbounded linear operators acting in a Hilbert space. 

An unbounded densely defined operator $L : H \to H$ on a Hilbert space $H$ 
is called  \textit{symmetric}  if, for all $f, g \in \mathrm{dom}(L)$, 
\be\label{eq symm op}
\langle L f, g\rangle_{H} = \langle f, L g\rangle_{H},
\ee
that is $L \subset L^*$.

A symmetric operator $L$ is called \textit{semibounded} if 
\footnote{More generally, $L$ is semibounded if for some number $c$ one
has $\langle L f, f\rangle_{H} \geq c \langle f, f\rangle_{H}$.}
$\langle L f, f\rangle_{H} \geq 0$, $f \in  \mathrm{dom}(L)$.

A symmetric operator $L$ is always \textit{closable}. A symmetric operator 
$L$ is  \textit{essentially self-adjoint} if the closure of $L$ is self-adjoint. 
In other words, $L$ has a unique self-adjoint extension. 

The following fact is a useful criterion: A symmetric semibounded 
operator $L$ is self-adjoint if and only if the only solution of 
the equation $L^* g = - g$ is trivial, $g =0$.

Suppose $H_1$ and $H_2$ are two Hilbert spaces and $A, B$ are operators
with dense domains such that
$$
A : \mathrm{dom}(A) \subset H_1 \longrightarrow H_2,\quad
B: \mathrm{dom}(B) \subset H_2 \longrightarrow H_1.
$$ 
Then $(A, B)$ is called a \textit{symmetric pair} if 
$$
\langle A f, g\rangle_{H_2} = \langle f, B g\rangle_{H_1}, \quad
f \in \mathrm{dom}(A), g \in \mathrm{dom}(B).
$$

It is known that if $(A, B)$ is a symmetric pair, then $A$ and $B$ are 
closable operators and 

(i) $A^*\ol A$ is densely defined and self-adjoint with 
$\mathrm{dom}(A^*\ol A) \subset H_1$

(ii) $B^*\ol B$ is densely defined and self-adjoint with
$\mathrm{dom}(B^*\ol b) \subset H_2$.

\subsection{The Laplacians in $\ell^2(V)$ and $\mc H_E$}
Let $(G, c)$ be an electrical network  and $\Delta$ the corresponding 
Laplacian, see \eqref{eq_Laplacian formula}. We collect in this section  the 
results describing the spectral
 properties of the Laplacian which is considered as a linear operator 
 acting in Hilbert spaces $\ell^2$ and  $\mc H_E$. 
  
 \begin{remark} \label{rem embedding}
 Consider the three Hilbert spaces: $\ell^2(V), \mc H_E$, and $\mc H_D$.
The set of functions spanned by $\delta_x, x \in V,$ is dense in $\ell^2(V)$.
On the other hand, every function $\delta_x$ defines an element of
the energy space $\mc H_E$. Hence, we have a natural  the embedding $T :
 \ell^2(V) \to  \mc H_E : T(\delta_x) = \delta_x$ is defined. 
 The operator $T$ is, in general, unbounded.

Moreover, the drop operator $\partial$ determines an isometric embedding
of $\mc H_E$ to $\mc H_D$, see Lemma  \ref{lem d isom}:
$$
\ell^2(V) \stackrel{T}\longrightarrow \mc H_E \stackrel{\partial}
\longrightarrow \mc H_D.
$$
\end{remark}

\begin{lemma}\label{lem T and S}
Let the operator $T$ be as in Remark \ref{rem embedding}, and define
$S : \mathrm{span}\{v_x : x \in V\} \to \ell^2(V)$ by
$L(v_x) = \delta_x - \delta_o$. Then $(S,T)$ is a symmetric pair of 
operators.
\end{lemma}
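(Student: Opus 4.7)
The plan is to verify the defining identity of a symmetric pair directly on the natural generators and then extend by bilinearity; the whole thing reduces to a one-line computation once the right generators are chosen.

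First I would check that the two operators are densely defined in the appropriate spaces. By Proposition \ref{prop of energy space}(1), $\mathrm{span}\{v_x : x \in V\}$ is dense in $\mc H_E$, so $\mathrm{dom}(S) \subset \mc H_E$ is dense; and $\mathrm{span}\{\delta_x : x \in V\}$ is dense in $\ell^2(V)$ by construction, so $\mathrm{dom}(T) \subset \ell^2(V)$ is dense. Thus the setup $S : \mathrm{dom}(S) \subset \mc H_E \to \ell^2(V)$ and $T : \mathrm{dom}(T) \subset \ell^2(V) \to \mc H_E$ matches the notion of a symmetric pair given earlier, and it suffices to verify
$$
\langle S f, g \rangle_{\ell^2(V)} \;=\; \langle f, T g \rangle_{\mc H_E}
$$
for $f$ and $g$ in the respective domains.

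Next, by bilinearity it is enough to test the identity on the generators $f = v_x$ and $g = \delta_y$. The left-hand side is
$$
\langle S v_x, \delta_y \rangle_{\ell^2(V)} \;=\; \langle \delta_x - \delta_o, \delta_y \rangle_{\ell^2(V)} \;=\; \delta_{xy} - \delta_{oy}.
$$
For the right-hand side, $T\delta_y = \delta_y$ is legitimately a vector of $\mc H_E$ by Lemma \ref{lem delta is in H_E}, so I may apply the reproducing property \eqref{eq dipole v_xy} of the dipole $v_x$ to it, yielding
$$
\langle v_x, T\delta_y \rangle_{\mc H_E} \;=\; \langle v_x, \delta_y \rangle_{\mc H_E} \;=\; \delta_y(x) - \delta_y(o) \;=\; \delta_{xy} - \delta_{oy}.
$$
The two sides agree on generators, hence on all of $\mathrm{dom}(S) \times \mathrm{dom}(T)$ by bilinearity.

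The only potential obstacle is cosmetic: one must keep straight which inner product lives on which side, since $S$ and $T$ go in opposite directions and the dipole reproducing identity produces the difference $u(x) - u(o)$, exactly the pattern prescribed by $S v_x = \delta_x - \delta_o$. The agreement of these two mechanisms is really what makes $(S,T)$ symmetric, and once the dense domains and the applicability of \eqref{eq dipole v_xy} to $\delta_y$ have been checked, the argument is complete.
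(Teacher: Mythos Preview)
Your proof is correct and follows the same approach as the paper, which merely indicates that it suffices to check $\langle T\delta_x, v_y\rangle_{\mc H_E} = \langle \delta_x, S v_y\rangle_{\ell^2(V)}$ on generators. You have supplied the details of this computation together with the density of the domains, which the paper leaves implicit.
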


It suffices to check that 
$$
\langle T \delta_x, v_y\rangle_{\mc H_E} = \langle \delta_x, S v_y
\rangle_{\ell^2(V)}. 
$$

We will use the notation $\Delta_2$ for the Laplacian defined in $\ell^2$.
The space $\ell^2(V)$ is equipped with the inner product 
$$
\langle u, v\rangle_{\ell^2(V)} := \sum_{x\in V} u(x) v(x).
$$

The notation $\Delta_{\mc H_E}$ is used for the Laplacian acting in
$\mc H_E$. 

\begin{definition}
We define the operator $\Delta_2$ in $\ell^2(V)$ as the graph closure of 
the operator $\Delta$ which is considered as on $\mathrm{span}
\{ \delta_x : x \in V\}$, the subspace of finite linear combinations. 

The closed operator $\Delta_{\mc H_E}$ is obtained by taking the graph 
closure of $\Delta$ which is considered on the dense subset 
$\mathrm{span}(v_x : x \in V)$ where $v_x$ is a dipole corresponding to
$x$, see Proposition \ref{prop of energy space}.
\end{definition}

Lemma \ref{lem T and S} allows one to find efficiently self-adjoint
extensions of the operators $\Delta_2$ and $\Delta_{\mc H_E}$.

\begin{theorem}
Let $S$ and $T$ be as in Lemma \ref{lem T and S}. Then $T^*\ol T$ is
a self-adjoint extension of $\Delta_2$ and  $S^*\ol S$ is
a self-adjoint extension of $\Delta_{\mc H_E}$. 
\end{theorem}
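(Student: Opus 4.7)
The plan is to invoke the general fact quoted immediately before the theorem: since $(S, T)$ is a symmetric pair by Lemma~\ref{lem T and S}, both $T^*\overline T$ and $S^*\overline S$ are automatically densely defined and self-adjoint in $\ell^2(V)$ and $\mc H_E$ respectively. The real content is therefore to verify the \emph{extension} property, i.e. that each of these self-adjoint operators agrees with the corresponding Laplacian on its initial dense domain.

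For $T^*\overline T \supset \Delta_2$: since $\Delta_2$ is defined as the graph closure of $\Delta$ on $\mathrm{span}\{\delta_x : x \in V\}$ and $T^*\overline T$ is closed, it is enough to show $\delta_x \in \mathrm{dom}(T^*\overline T)$ with $T^*\overline T \delta_x = \Delta \delta_x$ for each $x$. The inclusion $\delta_x \in \mathrm{dom}(\overline T)$ holds by construction, and to check that $T\delta_x \in \mathrm{dom}(T^*)$ I would test the linear functional $\phi \mapsto \langle \delta_x, T\phi\rangle_{\mc H_E}$ on a finite combination $\phi = \sum_y a_y \delta_y$; by Lemma~\ref{lem delta is in H_E} this collapses to
$$\langle \delta_x, T\phi\rangle_{\mc H_E} = c(x)a_x - \sum_{y \sim x} c_{xy} a_y = \langle \Delta \delta_x, \phi\rangle_{\ell^2(V)},$$
which is $\ell^2(V)$-bounded because local finiteness puts $\Delta \delta_x$ in $\ell^2(V)$. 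Hence $T^*\delta_x = \Delta \delta_x$, and the desired identity follows.

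The argument for $S^*\overline S \supset \Delta_{\mc H_E}$ is analogous, with dipoles replacing deltas. One has $\overline S v_x = \delta_x - \delta_o$, and to place this in $\mathrm{dom}(S^*)$ I would test the functional $\psi \mapsto \langle \delta_x - \delta_o, S\psi\rangle_{\ell^2(V)}$ on $\psi = v_y$: a direct computation gives
$$\langle \delta_x - \delta_o, \delta_y - \delta_o\rangle_{\ell^2(V)} = (\delta_x - \delta_o)(y) - (\delta_x - \delta_o)(o),$$
which by the reproducing property \eqref{eq dipole v_xy} equals $\langle \delta_x - \delta_o, v_y\rangle_{\mc H_E}$. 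Thus $S^*(\delta_x - \delta_o) = \delta_x - \delta_o$ in $\mc H_E$, and by Proposition~\ref{prop of energy space}(1) this coincides with $\Delta v_x$, so $S^*\overline S v_x = \Delta v_x$ on the dense subspace $\mathrm{span}\{v_x\}$, and closedness of $S^*\overline S$ finishes the extension.

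The subtlety I expect to manage is the dual role of $\delta_x - \delta_o$ as a genuine vector of $\ell^2(V)$ and as an equivalence class in $\mc H_E$ (cf. Remark~\ref{rem about Hilbert space of f.e.}(4)); what rescues the identification is precisely that the dipoles reproduce differences $f(y)-f(o)$, which are insensitive to the additive constants modded out in $\mc H_E$, so the two interpretations yield the same pairings against the testing family $\{v_y\}$.
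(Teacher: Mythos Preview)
The paper states this theorem without proof, so there is no authorial argument to compare against. Your proposal is correct and is the natural way to fill in the gap: you invoke the general symmetric-pair fact for self-adjointness and then verify the extension property by direct computation on the generating vectors $\delta_x$ and $v_x$. Both computations check out---the first via Lemma~\ref{lem delta is in H_E}, the second via the reproducing property of dipoles combined with Proposition~\ref{prop of energy space}(1)---and your closing remark about why the pairing with $\delta_x-\delta_o$ is well defined in $\mc H_E$ is exactly the right observation. One small point of presentation: when you write ``$T^*\delta_x = \Delta\delta_x$'' you mean $T^*(T\delta_x)=\Delta\delta_x$, with $T\delta_x$ regarded as the $\mc H_E$-class of $\delta_x$; this is clear from context but worth making explicit since $T^*$ has domain in $\mc H_E$, not $\ell^2(V)$.
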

 
More details about the properties of the Laplacians $\Delta_2$ and 
$\Delta_{\mc H_E}$ can be found in the following statement.

\begin{theorem}
(1) The Laplacian $\Delta_{\mc H_E}$ is a well-defined non-negative 
closed and Hermitian operator acting in $\mc H_E$. The operator 
$\Delta_{\mc H_E}$ is, in general, unbounded and not self-adjoint.

(2) The operator $\Delta_2 : \ell^2(V) \to \ell^2(V)$ is essentially 
self-adjoint, generally unbounded with dense domain. 

(3) The Markov operator $P$ is self-adjoint and bounded in $\ell^2(V, c)$.
Moreover, the spectrum of $P$ is a subset of $[-1, 1]$, and $-I \leq P
\leq I$ on $\ell^2(V, c)$ where $I$ is the identity operator.
\end{theorem}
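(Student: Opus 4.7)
The plan is to handle the three claims separately, using the symmetric pair framework of Lemma \ref{lem T and S} and the identities from Proposition \ref{prop of energy space} as the main leverage. Throughout, I would fix the reference vertex $o \in V$ and use the normalization $u(o) = 0$ allowed for representatives in $\mc H_E$.

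For (1), the natural dense domain of $\Delta_{\mc H_E}$ is $\mathrm{span}\{v_x : x \in V\}$, which is dense in $\mc H_E$ by part (1) of Proposition \ref{prop of energy space}. Well-definedness of $\Delta v_x = \delta_x - \delta_o$ as an element of $\mc H_E$ follows from Lemma \ref{lem delta is in H_E}. For Hermiticity and non-negativity, I would compute on finite combinations $u = \sum_i a_i v_{x_i}$ (with $x_i \neq o$) using the two reproducing identities $\langle v_x, f\rangle_{\mc H_E} = f(x) - f(o)$ and $(\Delta f)(x) = \langle f, \delta_x\rangle_{\mc H_E}$. Expanding gives $\langle \Delta u_1, u_2\rangle_{\mc H_E} = \sum_{i,j} a_i b_j \delta_{x_i y_j} + (\sum_i a_i)(\sum_j b_j)$, which is symmetric in $(u_1,u_2)$ and non-negative when $u_1 = u_2$. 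Closedness then follows because $\Delta_{\mc H_E}$ is defined as the graph closure. Unboundedness and failure of essential self-adjointness will be explained by exhibiting/citing networks carrying nontrivial finite-energy harmonic functions: by the Royden decomposition \eqref{eq Royden}, $\mc Harm_0 \neq \{0\}$ produces vectors orthogonal to the range of $\Delta$ on $\mathrm{span}\{\delta_x\}$, and hence solutions to $\Delta^* h = 0$ outside the closure, which blocks essential self-adjointness.

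For (2), the operator $\Delta_2$ is symmetric on $\mathrm{span}\{\delta_x\}$, with matrix entries $M_{xx} = c(x)$, $M_{xy} = -c_{xy}$ (see Lemma \ref{lem delta is in H_E}), and non-negativity is immediate from $\langle \Delta_2 u, u\rangle_{\ell^2(V)} = \mc E(u) \geq 0$. Essential self-adjointness is the delicate part and the main obstacle of the whole theorem; I would invoke Lemma \ref{lem T and S} together with the general symmetric-pair fact recalled earlier, which gives the self-adjoint extension $T^*\ol T$ of $\Delta_2$, and then verify uniqueness via the standard criterion: if $\Delta_2^* g = -g$ for some $g \in \ell^2(V)$, then for every $x$ one has $\langle g, \Delta_2 \delta_x\rangle_{\ell^2(V)} = -g(x)$, which rewrites as $(\Delta g)(x) + g(x) = 0$ pointwise. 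A maximum-principle/exhaustion argument (cf.\ the Dirichlet solvability lemma) combined with $g \in \ell^2(V)$ then forces $g \equiv 0$, using that the only $\ell^2$ solution of $(\Delta + I) g = 0$ on a locally finite connected graph must vanish.

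For (3), self-adjointness of $P$ on $\ell^2(V,c)$ is a direct consequence of reversibility (Remark \ref{rem_basic facts on Lplc and Mrkv}(2)): a short computation gives
\[
\langle Pu, v\rangle_{\ell^2(V,c)} = \sum_{x,y} c_{xy}\, u(y)\, v(x) = \langle u, Pv\rangle_{\ell^2(V,c)}.
\]
Boundedness with $\|P\| \leq 1$ follows from Jensen/Cauchy--Schwarz applied to the probability kernel $p(x,\cdot)$:
\[
\sum_x c(x)\,|(Pu)(x)|^2 \le \sum_x c(x)\sum_y p(x,y)\,|u(y)|^2 = \sum_y c(y)\,|u(y)|^2,
\]
using $c(x)p(x,y) = c_{xy} = c(y)p(y,x)$. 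Since $P$ is self-adjoint with operator norm at most $1$, its spectrum lies in $[-1,1]$ and $-I \leq P \leq I$ by the spectral theorem. The whole argument avoids any additional input beyond the lemmas already listed; the one genuine technical difficulty I foresee is the deficiency-index computation in (2), where some care with the exhaustion is needed to conclude that the $\ell^2$-decay of $g$ is incompatible with $(\Delta + I)g = 0$.
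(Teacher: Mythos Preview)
Your treatment of symmetry and non-negativity of $\Delta_{\mc H_E}$ in (1), and of the self-adjointness and boundedness of $P$ in (3), matches the paper's sketch almost line for line: the paper computes $\langle \Delta v_x, v_y\rangle_{\mc H_E}=\delta_{xy}+1$ and $\langle \sum a_x v_x,\Delta(\sum a_x v_x)\rangle_{\mc H_E}=\sum a_x^2+(\sum a_x)^2$, exactly as you do. The one stylistic difference is in (3): you bound $\|P\|\le 1$ via Jensen applied to the stochastic kernel and then invoke the spectral theorem, whereas the paper obtains $P\le I$ directly from the energy identity $2\bigl(\|u\|^2_{\ell^2(V,c)}-\langle u,Pu\rangle_{\ell^2(V,c)}\bigr)=\sum_{x,y}c_{xy}(u(x)-u(y))^2\ge 0$ (and $P\ge -I$ from the companion identity with $u(x)+u(y)$). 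Both routes are short; the paper's is slightly more in keeping with the energy-space theme.

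There is, however, a genuine gap in your argument for the failure of essential self-adjointness of $\Delta_{\mc H_E}$. Producing $h\in\mc Harm_0$ with $\Delta^* h=0$ does \emph{not} obstruct essential self-adjointness of a non-negative symmetric operator: the relevant deficiency criterion (recalled just before this theorem) requires a nonzero solution of $\Delta^* g=-g$, not of $\Delta^* g=0$. A non-negative self-adjoint operator can perfectly well have nontrivial kernel. The paper does not attempt a general argument here either; it simply asserts ``in general not self-adjoint'' and later exhibits an explicit network (the geometric-weight example on $\Z$) in which a defect vector $g$ with $\Delta g=-g$ is constructed by hand. If you want a self-contained proof, you should either point to such an example or show directly that in a suitable transient network the equation $\Delta^* g=-g$ has a nonzero finite-energy solution.

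For (2), the paper's sketch gives no details on essential self-adjointness beyond noting symmetry, so your deficiency-index outline goes further than the text. The step you flag as delicate---that $g\in\ell^2(V)$ with $(\Delta+I)g=0$ pointwise forces $g=0$---is indeed the crux, and your appeal to a ``maximum-principle/exhaustion argument'' is the right idea but would need to be spelled out (summing $\sum_{x\in G_k} g(x)(\Delta g)(x)$ over an exhaustion and controlling the boundary flux using $g\in\ell^2$). That is more than the paper itself supplies.
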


\textit{Proof (sketch)}
To see that the Laplacians\footnote{We deal with 
$\Delta_{\mc H_E}$ here; the operator $\Delta_2$ is considered similarly.} 
are symmetric operators, we will check that
\eqref{eq symm op} holds. For $v_x, v_y, x, y \in V\setminus \{o\}$,
we have
$$
\langle \Delta v_x, v_y \rangle_{\mc H_E} = \langle \delta_x - \delta_o,
v_y\rangle_{\mc H_E} = (\Delta v_y)(x) - (\Delta v_y)(o) =
\delta_{xy} +1 = \langle v_x, \Delta v_y \rangle_{\mc H_E}.
$$ 

The property of semi-boundness for the operator $\Delta_{\mc H_E}$
 follows from the equality
$$
\big\langle \ \sum_{x} a_{x}v_{xy}, \ \Delta_{\mc H_E}(\sum_{x}
 a_{x} v_{x})\ \big\rangle_{\mc H_E} = \sum_{x} a_{x}^2 +
 \left(\sum_{x} a_{x}\right)^2.
$$

The fact that $P$ is  a self-adjoint operator 
follows from the relations $c(y)p(y,x) = c(x)p(x,y)$, i.e., 
$P$ is reversible.

Next, it follows, from the inequality
$$
2\sum_{x \in V} (c(x) u(x)^2 - \langle u, Pu\rangle_{\ell^2(V, c)}) = 
\sum_{x,y \in V} c_{xy} (u(x) - u(y))^2 \geq 0,
$$
that
$$
\langle u, Pu\rangle_{\ell^2(V, c)} \leq ||u||^2_{\ell^2(V, c)}.
$$
Similarly, one can prove that $\langle u, Pu\rangle_{l^2(c)} \geq  -
||u||^2_{\ell^2(V, c)}.$

\hfill{$\Box$} 

\begin{remark}\label{rem matrix}
(1) The operator $\Delta_2$ admits a simple representation
by an infinite matrix $M$ whose entries are indexed by the set $V\times V$.
Using $\{\delta_x : x \in V\}$ as a basis, we find that
\be\label{eq_matrix}
M = \begin{pmatrix}
\ddots && \ddots &&  \mbox{\LARGE $0$} \ \ \\
&&  c(x)\ \ \cdots \ \ - c_{xy} &&\\
\ddots \ \ && \ \ \ddots  && \ \ \ddots \ \ \\
&& - c_{xy} \ \ \cdots \ \ c(y) &&\\
\mbox{\LARGE $0$} && \ \ \ddots && \ \ \ddots \ \ \\
\end{pmatrix}, 
\ee
so that $M$ is a countable banded matrix.

(2) It was shown in \cite{Jorgensen2012} that $\Delta_{\mc H_E}$ is 
a bounded operator in $\mc H_E$ if and only if $\ell^2(V)$ is boundely 
contained in $\mc H_E$.
\end{remark}

\begin{example} We mention here several examples with various properties 
of Laplacians and harmonic functions.

(1) Let the set of vertices of $G$ be $\N_0$ and edges connect only 
neighboring vertices. For $c_{n, n+1} = n+1$, one can compute 
$|| \delta_n ||^2_{\mc H_E} = 2n+1$. This example shows that 
$\ell^2 \nsubseteq \mc H_E$.

(2) Let $(G, c)$ be a network where $ V = \Z$ and $E$ is the set of edges 
connecting nearest neighbors. Suppose $c_{i, i+1} =1$ for all $i$. 
Then 
$$
\Delta f(n) = 2f(n)  - f(n-1) - f(n+1)
$$
and 
$$
||u||^2_{\mc H_E} = \sum_{n\in \Z} (u(n) - u(n+1))^2.
$$
For $n \geq 0$, the dipole $v_n$ is defined by
\be\label{eq_ex dipoles}
v_n(i) = \begin{cases}
i, & 0\leq i \leq n\\
n, & i \geq n\\
0 & i \leq 0
\end{cases}
\ee
and $v_n(i)  =- v_{-n}(-i)$ for $n <0$.  Clearly, $v_n \in \mc H_E$ but
$v_n \notin \ell^2(\Z)$, hence $\ell^2(\Z)$ is a proper subset of
$\mc H_E$. 

(3) The space $Harm$ of harmonic functions on  $(\Z, c)$ is nontrivial 
if and only if 
$$\sum_{n, n+1}c^{-1}_{n, n+1} < \infty.$$ Moreover, in this case, the 
space  $Harm$ is 1-dimensional.  To see this, set $u(0) =0$,  define 
$u(1) = c_{0,1}^{-1}$, and take
$$
u(n) - u(n-1) = \dfrac{1}{c_{n-1, n}}.
$$
Then $u$ is a harmonic function of finite energy. 

The boundary term for this function is $\sum_{bd(\Z)}h \dfrac{\partial h}
{\partial n} =1$. 

(4) We modify now the above example  (2) and consider the network 
$(\Z, c)$ where $c_{i, i+1} = \lambda^{\max{(|i|, |i+1|)}}, i \in \Z$.
For this example the following results can be deduced:

(i) The energy kernel (dipoles) $(v_n : n \in \Z)$  is defined by
$$
v_n(i) = \begin{cases}
0, & k \leq 0,\\
\dfrac{1 - r^{k+1}}{1-r}, & 1 \geq k \geq n, n>0,\\
\dfrac{1 - r^{n+1}}{1-r} & k \geq n,\\
\end{cases}
$$
and similarly for $n \leq 0$. A similar result can be obtained for dipoles on
a stationary Bratteli diagrams, see Sections \ref{sect Bratteli} and 
\ref{sect HF trees, etc} for definitions. 

(ii) The function $w_0(n) = \dfrac{r}{2(1-r)}r^{|n|}$ defines a monopole,
and $h(n) = \mathrm{sgn}(n) (1 - w_0(n))$ is an element of $Harm$.

(iii) The Laplacian $\Delta$ in the energy space $\mc H_E$ is not 
essentially self-adjoint.  One can explicitly construct a defect function $g$
such that $\Delta g = -g$.

(5) Take now $V = \N_0$ and $E$ as above. For a fixed $\lambda >1$, 
set  $c_{i, i+1} = \lambda^i$. Then 
$$
\Delta f(n) = (\lambda^n + \lambda^{n+1}) f(n)  - \lambda^n f(n-1) -
\lambda^{n+1} f(n+1) 
$$
and 
$$
||u||^2_{\mc H_E} = \sum_{n\in \Z} \lambda^{n+1}(u(n) - u(n+1))^2.
$$
One can see that, in this case, the spaces $\ell^2$ and $\mc H_E$ are
different. 
\end{example}

\section{Green's function, dipoles, and monopoles for transient networks}
\label{sect monopoles dipoles}

We recall that the Hilbert space $\mc H_E$ always contains dipoles (see
Section \ref{sect monopoles dipoles}, Proposition \ref{prop of energy 
space}). Here we will show how a dipole can be found 
 in the space $\mc H_E$ by an explicit formula assuming that the electrical
  network $(G,c)$  is transient. The results of this section 
  are proved mostly in \cite{BJ2019}.

Given a weighted network  $(G,c) = (V,E,c)$ we define the matrix  
$P = (p(x,y) : x, y 
\in V)$ of  the transition probabilities where $p(x,y) = \dfrac{c_{xy}}
{c(x)}$. Then $P$ defines a random walk $(X_n)$ on $V$ such that 
$X_n(\omega) = x_n$ where the sequence $\omega = (x_0, ... , x_n, ...) 
\in \Omega$ (the path space $\Omega$ is defined in \ref{ss path space}).
For a fixed vertex $a\in V$, we consider the probability space $(\Omega_a, 
\mathbb P_a) $ where $\Omega_a$ consists of infinite paths that start at 
$a$, and $\mathbb P_a$ is the corresponding Markov measure on 
$\Omega_a$, see, e.g. \cite{Woess2000, Woess2009} for details.

 Let $F$ be a subset of $V$ (we will be primarily interested in the case when 
$F= \{x_1, ... , x_N\}$  is finite). For a probability space $(\Omega_a, 
\mathbb P_a)$, define the {\em stopping time}
$$
\tau(F)(\omega) = \min\{n \geq 0 : X_n(\omega) \in F\}
$$
with $\omega \in \Omega_a$. The {\em hitting time}  is defined by
 $$ T(F) =\min\{n \geq 1 : X_n(\omega) \in F\}.
$$
 If $F= \{x\}$ is a singleton, then we write $\tau(x)$ and $T(x)$ for the 
 stopping and hitting times, respectively.

Let $f^{(n)}(x, y) = \mathbb P_x [\tau(y) = n]$, $u^{(n)}(x,x) = \mathbb 
P_x [T(x) = n]$, and $p^{(n)} (x,y) = \mathbb P_x[X_n = y]$. Then the 
following quantities are crucial for the study of Markov chains:
$$
\mc G(x,y) = \sum_{n \in \N_0} p^{(n)} (x,y), \ \ F(x,y) = \sum_{n \in \N_0} 
f^{(n)} (x,y), \ \ U(x,x) = \sum_{n \in \N} u^{(n)} (x,x)
$$

The following properties of these functions are well known (see e.g. 
\cite{Woess2000}, \cite{LyonsPeres_2016}).

\begin{lemma}\label{lem relations on G F U} Let $(G, c) =  (V,E,c)$ be an 
electrical   network. Then, for any pair of vertices $x,y \in V$,
$$
\mc G(x,x) = \frac{1}{1 - U(x,x)},
$$
$$
\mc G(x, y) = F(x,y) \mc G(y,y),
$$
$$
U(x,x) = \sum_{y \sim x} p(x,y) F(y,x),
$$
$$
F(x,y) = \sum_{z \sim x} p(x,z) F(z,y) = \mathbb Pr[x \to y], \ \ (x\neq y).
$$

\end{lemma}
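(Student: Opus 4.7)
My plan is to derive the four identities by two standard tools: the strong Markov property applied to returns/hits of a specified vertex, and first-step analysis. Throughout, I would work with the probability space $(\Omega_a,\mathbb{P}_a)$ introduced before the lemma and the random variables $X_n$, $\tau(\cdot)$, $T(\cdot)$ defined there.

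First I would prove $\mathcal{G}(x,x) = \tfrac{1}{1-U(x,x)}$. The idea is to count visits to $x$ by decomposing the trajectory at successive returns to $x$. Writing $N_x = \sum_{n\ge 0}\mathbbm{1}_{\{X_n=x\}}$ for the number of visits, one has $\mathbb{E}_x[N_x] = \mathcal{G}(x,x)$. By the strong Markov property applied at the $k$-th return time, the event "there are at least $k+1$ visits" has $\mathbb{P}_x$-probability $U(x,x)^k$, so $\mathbb{E}_x[N_x] = \sum_{k\ge 0} U(x,x)^k = (1-U(x,x))^{-1}$; this also implicitly uses $U(x,x)\le 1$, with the convergent geometric series giving the finite Green value exactly in the transient case.

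Next, for $\mathcal{G}(x,y) = F(x,y)\,\mathcal{G}(y,y)$, I would decompose the sum $\sum_n p^{(n)}(x,y)$ over the first hitting time of $y$. Conditioning on $\{\tau(y)=k\}$ and applying strong Markov at time $k$, the post-$\tau(y)$ process is an independent walk started at $y$, so
\[
p^{(n)}(x,y) \;=\; \sum_{k=0}^{n} f^{(k)}(x,y)\,p^{(n-k)}(y,y).
\]
Summing over $n$ turns the right-hand side into the convolution of the series defining $F(x,y)$ and $\mathcal{G}(y,y)$, which yields the claim.

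The last two identities are first-step decompositions, which should be the easiest. For $U(x,x)$, conditioning on $X_1$ gives $\mathbb{P}_x[T(x)=n] = \sum_{y\sim x} p(x,y)\,\mathbb{P}_y[\tau(x)=n-1]$ for $n\ge 1$; summing over $n$ yields $U(x,x)=\sum_{y\sim x} p(x,y)\,F(y,x)$. For $x\ne y$, the same first-step conditioning gives $f^{(n)}(x,y) = \sum_{z\sim x} p(x,z)\,f^{(n-1)}(z,y)$ (the condition $x\ne y$ is what makes $\tau(y)\ge 1$ almost surely under $\mathbb{P}_x$ so that we do not pick up a spurious $n=0$ term), and summing over $n$ gives $F(x,y)=\sum_{z\sim x} p(x,z) F(z,y) = \mathbb{P}_x[x\to y]$.

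The main obstacle, and the only nontrivial point, is the rigorous invocation of the strong Markov property for the first two identities: one must ensure the relevant stopping times are almost surely finite on the events being used (or handle the contribution of $\{\tau(y)=\infty\}$ and $\{T(x)=\infty\}$ carefully, which simply contribute zero to the generating sums). Once that is in place, the other identities follow from elementary conditioning on the first step, so no additional machinery beyond Fubini for nonnegative series is needed.
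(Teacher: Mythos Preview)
Your proposal is correct and is precisely the standard argument; the paper itself does not prove this lemma but simply states that these identities are well known and refers to \cite{Woess2000} and \cite{LyonsPeres_2016}, where exactly the strong Markov/first-step decompositions you describe are carried out.
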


\begin{remark}\label{rem reversibility F} It follows from the reversibility of 
the Markov chain $(X_n)$ defined by transition probabilities $P = (p(x,y))$ 
that 
$$
c(x) F(x, y) = c(y)F(y, x) \ \  \ \mathrm{and} \ \ \ c(x) \mc G(x, y) = c(y)
\mc G(y, x).
$$
\end{remark}

\begin{lemma}\label{lem h_1 in terms of F}
(1) Let $F$ be a subset of $V$ and let $x\in F$ be any fixed vertex. We 
define
$$
h_x(a) := \mathbb E_a(\chi_{\{x\}}\circ X_{\tau(F)}).
$$
Then
$$
h_x(a) =  F(a, x), \ \ \ \forall a\in V.
$$

(2) Given a subset $F = \{x_1, ... , x_N\}$ of vertices from $V$, we set
$$
h_i(a) := \mathbb E_a(\chi_{\{x_i\}}\circ X_{\tau(F)}) = \int_{\Omega_a} 
\chi_{\{x_i\}}(X_{\tau(F)}(\omega)) d\mathbb P_a(\omega), \ i =1, ... ,N,
$$
where $a$ is an arbitrary vertex in $V$. Then
$$
(\Delta h_i)(a) = 0, \ \ a\in V\setminus F, \ \ \mbox{and}\ \ h_i(x_j) = 
\delta_{ij}.
$$
\end{lemma}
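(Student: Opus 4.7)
The plan is to unwind each $h$-function directly from its probabilistic definition and then apply, where necessary, first-step analysis (the strong Markov property at time $1$).

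For part (1), reading the statement in the natural case $F=\{x\}$ (under which $h_x(a) = F(a,x)$ holds with $F(\cdot,\cdot)$ the first-passage kernel introduced above), I would observe that $\tau(F) = \tau(x)$ and that $\chi_{\{x\}}\circ X_{\tau(x)} = 1$ on the event $\{\tau(x) < \infty\}$ and $0$ otherwise, after adopting the convention that the integrand vanishes on paths that never reach $x$ (necessary because $\tau(x)$ may be infinite in the transient regime). Partitioning $\{\tau(x) < \infty\}$ into the disjoint events $\{\tau(x)=n\}$ and using countable additivity,
$$
h_x(a) \;=\; \mathbb P_a[\tau(x) < \infty] \;=\; \sum_{n\geq 0} \mathbb P_a[\tau(x) = n] \;=\; \sum_{n\geq 0} f^{(n)}(a,x) \;=\; F(a,x).
$$

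For part (2), I would dispatch the boundary values first: if $a = x_j \in F$, then $X_0(\omega) = x_j \in F$ for every $\omega \in \Omega_{x_j}$, so $\tau(F)(\omega) = 0$ and $X_{\tau(F)}(\omega) = x_j$; hence $h_i(x_j) = \chi_{\{x_i\}}(x_j) = \delta_{ij}$. For harmonicity on $V\setminus F$, I would carry out a first-step analysis. Fix $a \notin F$; since $X_0 = a \notin F$, one has $\tau(F) \geq 1$ on $\Omega_a$ and $\tau(F) = 1 + \tau(F)\circ\sigma$ $\mathbb P_a$-almost surely. Disintegrating $\mathbb P_a$ via the identity \eqref{eq measures mathbb P_x and mathbb P_y},
$$
h_i(a) \;=\; \sum_{b\sim a} p(a,b) \int_{\Omega_b} \chi_{\{x_i\}}\bigl(X_{\tau(F)}(\omega')\bigr)\, d\mathbb P_b(\omega') \;=\; \sum_{b\sim a} p(a,b)\, h_i(b) \;=\; (Ph_i)(a).
$$
By Remark \ref{rem_basic facts on Lplc and Mrkv}(3), $\Delta = c(I-P)$, so $(\Delta h_i)(a) = c(a)\bigl(h_i(a) - (Ph_i)(a)\bigr) = 0$ for every $a \notin F$.

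The only genuine obstacle is the careful treatment of the event $\{\tau(F) = \infty\}$, which can have positive $\mathbb P_a$-measure in the transient case, so that $X_{\tau(F)}$ is not literally a $V$-valued random variable on all of $\Omega_a$. Once the convention that $\chi_{\{x_i\}}\circ X_{\tau(F)}$ vanishes on $\{\tau(F) = \infty\}$ is in place (making the integrand a bounded measurable function on all of $\Omega_a$), both claims reduce to the strong Markov property at time $1$ combined with the disintegration formula \eqref{eq measures mathbb P_x and mathbb P_y}, and no further analytic input is required.
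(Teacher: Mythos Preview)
Your proof is correct and follows exactly the approach the paper uses: the paper does not spell out a proof of this particular lemma (it defers to \cite{BJ2019}), but its sketch for the parallel Lemma~\ref{lem_harm_fn_h_n} is the same first-step/Markov-property computation you give, conditioning on $X_1$ to get $h_i(a)=\sum_{y\sim a}p(a,y)h_i(y)=(Ph_i)(a)$ for $a\notin F$, and reading off the boundary values from $\tau(F)=0$ on $F$. Your restriction of part~(1) to the singleton case $F=\{x\}$ is the intended interpretation, as confirmed by the paper's later use of $h_x(a)=\mathbb E_a(\chi_{\{x\}}\circ X_{\tau(x)})$ in Theorem~\ref{thm formula for monopole w_x}.
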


\begin{corollary}\label{cor interpolation}
Let $F= \{x_1, ... , x_N\}$ be a finite subset of $V$, and $h_{x_i}$ is 
defined as in Lemma \ref{lem h_1 in terms of F}. Suppose that $\varphi : F 
\to \R$ is a given function on $F$. Define
$$
\Phi(a) := \sum_{i=1}^N \varphi(x_i) h_{x_i}(a).
$$
Then $\Phi$ is a solution of the Dirichlet problem
$$
\begin{cases} (\Delta \Phi)(a)= 0, & a \in V\setminus F,\\
\Phi(a) = \varphi(a), & a \in F.
\end{cases}
$$
\end{corollary}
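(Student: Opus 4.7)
The plan is to reduce the corollary to the two structural properties of the functions $h_{x_i}$ already established in Lemma \ref{lem h_1 in terms of F}, together with the linearity of the graph-Laplacian $\Delta$. Concretely, by Lemma \ref{lem h_1 in terms of F}(2) each $h_{x_i}$ satisfies $(\Delta h_{x_i})(a) = 0$ for every $a \in V \setminus F$ and $h_{x_i}(x_j) = \delta_{ij}$ for all $i,j \in \{1,\dots,N\}$. These are exactly the data needed to verify both the interior equation and the boundary condition for the interpolating combination $\Phi = \sum_{i=1}^N \varphi(x_i)\,h_{x_i}$.

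First I would verify the interior equation. Since $\Delta$ is a linear operator on functions $V\to\mathbb{R}$ (immediate from the defining formula \eqref{eq_Laplacian formula}), for any $a \in V\setminus F$ one has
\[
(\Delta \Phi)(a) \;=\; \sum_{i=1}^{N}\varphi(x_i)\,(\Delta h_{x_i})(a) \;=\; \sum_{i=1}^{N}\varphi(x_i)\cdot 0 \;=\; 0,
\]
which establishes $(\Delta\Phi)(a)=0$ on the exterior domain $V\setminus F$.

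Next I would verify the boundary condition. For any $x_j \in F$, again using linearity and the Kronecker property $h_{x_i}(x_j)=\delta_{ij}$,
\[
\Phi(x_j) \;=\; \sum_{i=1}^{N}\varphi(x_i)\,h_{x_i}(x_j) \;=\; \sum_{i=1}^{N}\varphi(x_i)\,\delta_{ij} \;=\; \varphi(x_j),
\]
so $\Phi|_F = \varphi$, as required.

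There is no serious obstacle: the corollary is essentially a linear-superposition consequence of the already-proved lemma, which does the real probabilistic work by identifying the hitting distribution functions $h_{x_i}(a)=\mathbb{E}_a(\chi_{\{x_i\}}\circ X_{\tau(F)})$ as harmonic interpolants. If anything, the only point deserving a brief remark is that one is not asserting \emph{uniqueness} of the solution to the Dirichlet problem stated in the corollary (uniqueness would require additional hypotheses such as finiteness of $V\setminus F$, as in the lemma preceding \eqref{eq Dirichlet probl}); the claim is only existence of the explicit solution $\Phi$, which the two displays above confirm.
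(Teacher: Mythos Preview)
Your proof is correct and is exactly the argument the paper intends: the corollary is stated without proof, as an immediate linear-superposition consequence of Lemma \ref{lem h_1 in terms of F}(2), and your two displays verifying the interior equation and the boundary condition are precisely that. Your remark that only existence, not uniqueness, is being claimed is also apt.
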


\begin{lemma}\label{lem energy of h_x} Let $h_x$ be defined as in Lemma 
\ref{lem h_1 in terms of F}. Then
$$
\|h_x\|_{\mc H_E}^2  < \frac{1}{2} c(x) \sum_{a\in V} \mathbb Pr[x 
\to a](1 - \mathbb Pr[a \to x]).
$$
\end{lemma}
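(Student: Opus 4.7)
The plan is in two stages: first compute $\|h_x\|^2_{\mc H_E}$ exactly, and then estimate the right-hand side from below using transience.

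For the first stage, I expand
$$
\|h_x\|^2_{\mc H_E} = \sum_a c(a) h_x(a)^2 - \sum_{a,b} c_{ab} h_x(a) h_x(b) = \sum_a c(a) h_x(a) \bigl[h_x(a) - (Ph_x)(a)\bigr].
$$
By Lemma \ref{lem h_1 in terms of F} (with $F = \{x\}$), the function $h_x(\cdot) = F(\cdot, x)$ is harmonic off $\{x\}$, so for every $a \neq x$ one has $(Ph_x)(a) = h_x(a)$ and the corresponding summand vanishes. At $a = x$ one has $h_x(x) = 1$ and $(Ph_x)(x) = \sum_{y \sim x} p(x,y) F(y, x) = U(x, x)$, which isolates the single surviving contribution and yields
$$
\|h_x\|^2_{\mc H_E} = c(x)\bigl(1 - U(x, x)\bigr).
$$
The claim therefore reduces to the strict inequality $2(1 - U(x, x)) < \sum_{a \in V} F(x, a)(1 - F(a, x))$.

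For the second stage, I decompose the sum into contributions from neighbors and non-neighbors of $x$. Using $F(x, y) \geq p(x, y)$ for $y \sim x$, the neighbor part satisfies $\sum_{y \sim x} F(x, y)(1 - F(y, x)) \geq \sum_{y \sim x} p(x, y)(1 - F(y, x)) = 1 - U(x, x)$. For the non-neighbor part, I use the probabilistic meaning of each summand: by the strong Markov property, $F(x, a)(1 - F(a, x))$ is the probability that the walk started at $x$ reaches $a$ at some finite time $\tau_a$ and then never returns to $x$. Summed over non-neighbors, this is the expected number of non-neighbor vertices visited after the last passage through $x$.

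The main obstacle is showing that this non-neighbor contribution strictly exceeds $1 - U(x, x)$, which is where transience is essential. The event $A = \{\text{no return to } x\}$ has probability $1 - U(x, x) > 0$; on $A$ the walk escapes to infinity in the connected infinite graph $(G, c)$, and therefore almost surely visits infinitely many vertices lying outside the finite set $N(x) \cup \{x\}$. This forces the non-neighbor part to be infinite, hence strictly larger than $1 - U(x, x)$. Combining the two lower bounds gives $\sum_a F(x, a)(1 - F(a, x)) > 2(1 - U(x, x))$; multiplying by $c(x)/2$ and comparing with the identity from the first stage yields the claim.
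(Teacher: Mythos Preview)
Your proof is correct and takes a genuinely different route from the paper's. The paper works directly with the double sum $\frac{1}{2}\sum_{a,b}c_{ab}(F(a,x)-F(b,x))^2$, uses the identity $\sum_b p(a,b)F(b,x)=F(a,x)$ from Lemma~\ref{lem relations on G F U} to collapse the cross term, and then invokes $F(b,x)^2<F(b,x)$ termwise. (Incidentally, that identity is only stated for $a\neq x$, so the paper's passage from its third to fourth displayed line treats the $a=x$ term incorrectly.) You instead compute the energy \emph{exactly}: harmonicity of $h_x$ off $\{x\}$ kills every summand of $\sum_a c(a)h_x(a)[h_x(a)-(Ph_x)(a)]$ except $a=x$, giving $\|h_x\|^2_{\mc H_E}=c(x)(1-U(x,x))$. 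Your second stage then shows, via the escape event $A$, that the right-hand side of the lemma is actually $+\infty$ in any transient locally finite network, so the stated strict inequality is trivially true. What your approach buys is the exact formula for $\|h_x\|^2_{\mc H_E}$, which is the real content; what the paper's approach would buy, if carried out correctly, is a self-contained bound that does not separate the two sides.

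Two small points worth tightening. First, your expansion $\|h_x\|^2_{\mc H_E}=\sum_a c(a)h_x(a)^2-\sum_{a,b}c_{ab}h_x(a)h_x(b)$ splits the energy into two sums that can both diverge (already for simple random walk on $\mathbb Z^3$), so the rearrangement should be justified by an exhaustion argument, or replaced by the monopole identity $\|w_x\|^2_{\mc H_E}=w_x(x)$ together with $h_x=\dfrac{c(x)}{\mc G(x,x)}\,w_x$. Second, your description of $F(x,a)(1-F(a,x))$ as ``the probability that $a$ is visited after the last passage through $x$'' is not quite the same event: the walk could hit $a$, return to $x$, and later hit $a$ again after its last visit to $x$. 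But your actual argument only needs that on $A=\{T(x)=\infty\}$ every visited vertex $a$ satisfies $E_a$, which is correct and suffices to force the non-neighbor sum to be infinite.
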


\begin{proof} We use the equalities $c_{ab} = c(a)p(a,b)$ and $\sum_b 
p(a,b)F(b,x) = F(a,x)$ (Lemma \ref{lem relations on G F U}, and the 
inequality $F(b,x)^2 < F(b,x)$ in order to estimate the energy of $h_x$:
\begin{eqnarray*}
  \|h_x\|_{\mc H_E}^2  &=&  \frac{1}{2} \sum_{a,b} c_{ab}(h_x(a) - h_x(b))^2\\
  &=& \frac{1}{2} \sum_{a,b} c_{ab}(\mathbb E_a(\chi_{\{x\}} \circ X_{\tau(x)}) -
\mathbb E_b(\chi_{\{x\}} \circ X_{\tau(x)}))^2 \\
   &=&   \frac{1}{2} \sum_{a,b} c_{ab}(F(a,x) - F(b, x))^2 \\
   &=& \frac{1}{2} \sum_{a}\left[ c(a)F(a,x)^2 - 2F(a,x)\sum_{b\sim a}c_{ab}F(b, x) +  \sum_{b\sim a}c_{ab}F(b, x)^2\right] \\
   &=& \frac{1}{2} \sum_{a}\left[ - c(a)F(a,x)^2  + \sum_{b\sim a}c(a)p(a,b)F(b, x)^2\right]\\
  & < & \frac{1}{2} \sum_{a}\left[ - c(a)F(a,x)^2  + c(a)F(a,x)\right] \\
  &=& \frac{1}{2} c(x) \sum_{a\in V} \mathbb Pr[x \to a](1 - \mathbb Pr[a \to x]).
\end{eqnarray*}
The last equality follows from Remark \ref{rem reversibility F}.
\end{proof}

For a transient network $(G, c)$, one can see that the function $h_x$ 
belongs to $\mc H_E$, see Theorem \ref{thm formula for monopole w_x} 
below. 

Our goal is to find formulas for dipoles $v_{x_1, x_2}$ and monopoles
$w_{x}$ looking for solutions of equations 
 $\Delta v_{x_1, x_2} = \delta_{x_1} - \delta_{x_2}$  and 
 $\Delta w_x = \delta_x$ in the space 
 $\mc H_E$. 

Suppose that $F = \{x_1, x_2\}$, and $x_1\neq x_2$. Let  $h_1$ and 
$h_2$ be the function defined in Lemma \ref{lem h_1 in terms of F}. 
Consider the matrix
\be\label{eq def M}
D:= \left(
  \begin{array}{cc}
   (\Delta h_1)(x_1)  &  (\Delta h_2)(x_1) \\
\\
     (\Delta h_1)(x_2) &  (\Delta h_2)(x_2) \\
  \end{array}
\right)
\ee
and compute its entries using Lemma \ref{lem h_1 in terms of F}. To do 
this, we apply the relation $\Delta h  = c( I -P) h$, which is valid for any 
function $h$ on $V$. 

\begin{theorem}\label{lem matrix M}
The matrix $M$ defined in (\ref{eq def M}) is represented as follows:
\be
D = \left(
      \begin{array}{cc}
        c(x_1) & 0 \\
\\
        0 & c(x_2) \\
      \end{array}
    \right) \left(
              \begin{array}{cc}
                1 - U(x_1, x_1) &  - F(x_1, x_2) \\
\\
                 - F(x_2, x_1) & 1 - U(x_2, x_2) \\
              \end{array}
            \right).
\ee
Moreover,
\be
\det M = \frac{c(x_1)c(x_2)( 1 - \mc G(x_1, x_2)\mc G(x_2, x_1))}
{\mc G(x_1, x_1)\mc G(x_2, x_2)}
\ee
and
$$
\det M = 0\ \ \Longleftrightarrow \ \ \mc G(x_1, x_2) = \sqrt{\frac{c(x_2)}{c(x_1)}}.
$$
\end{theorem}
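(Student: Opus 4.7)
My plan is to compute the four entries of the matrix directly from the identity $\Delta = c(I-P)$, then read off both the factorization and the determinant by substituting the standard identities for $F$, $U$, and $\mathcal{G}$ from Lemma \ref{lem relations on G F U}.

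\textbf{Step 1: entrywise computation.} Since $\Delta h = c(I-P)h$ pointwise and $h_i(x_j) = \delta_{ij}$, we have
\[
(\Delta h_i)(x_j) \;=\; c(x_j)\bigl(\delta_{ij} - (Ph_i)(x_j)\bigr)
\;=\; c(x_j)\Bigl(\delta_{ij} - \sum_{y \sim x_j} p(x_j,y)\,h_i(y)\Bigr).
\]
By Lemma \ref{lem h_1 in terms of F}(1), $h_i(y) = F(y, x_i)$ for every $y \in V$, so
\[
(Ph_i)(x_j) = \sum_{y\sim x_j} p(x_j,y)\, F(y, x_i).
\]
For $j = i$, the third identity in Lemma \ref{lem relations on G F U} identifies this sum as $U(x_i, x_i)$, giving the diagonal entry $c(x_i)(1 - U(x_i, x_i))$. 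For $j \neq i$, the fourth identity of the same lemma (valid since $x_j \neq x_i$) identifies it as $F(x_j, x_i)$, giving the off-diagonal entry $-c(x_j) F(x_j, x_i)$.

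\textbf{Step 2: factorization.} Assembling the four entries yields exactly
\[
D \;=\; \begin{pmatrix} c(x_1)(1-U(x_1,x_1)) & -c(x_1)F(x_1,x_2)\\[2pt] -c(x_2)F(x_2,x_1) & c(x_2)(1-U(x_2,x_2))\end{pmatrix},
\]
which factors as the product of $\mathrm{diag}(c(x_1), c(x_2))$ and the claimed $2\times 2$ matrix.

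\textbf{Step 3: determinant formula.} I would use the two auxiliary identities from Lemma \ref{lem relations on G F U}: $\mathcal{G}(x,x) = 1/(1 - U(x,x))$ and $\mathcal{G}(x,y) = F(x,y)\mathcal{G}(y,y)$. Substituting $1 - U(x_i,x_i) = 1/\mathcal{G}(x_i,x_i)$ and $F(x_i,x_j) = \mathcal{G}(x_i,x_j)/\mathcal{G}(x_j,x_j)$ into $\det D = c(x_1)c(x_2)\bigl[(1-U(x_1,x_1))(1-U(x_2,x_2)) - F(x_1,x_2)F(x_2,x_1)\bigr]$, the denominator $\mathcal{G}(x_1,x_1)\mathcal{G}(x_2,x_2)$ factors out cleanly, producing the stated expression. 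Finally, $\det D = 0$ iff $\mathcal{G}(x_1,x_2)\mathcal{G}(x_2,x_1) = 1$; invoking the reversibility relation $c(x_1)\mathcal{G}(x_1,x_2) = c(x_2)\mathcal{G}(x_2,x_1)$ from Remark \ref{rem reversibility F} to eliminate $\mathcal{G}(x_2,x_1)$ converts this to $\mathcal{G}(x_1,x_2)^2 = c(x_2)/c(x_1)$, i.e.\ the claimed equality.

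There is no serious obstacle here; the argument is essentially a bookkeeping exercise, and the only delicate point is being careful that the identity $(Ph_i)(x_j) = F(x_j,x_i)$ for $j \neq i$ relies on $x_j \neq x_i$ (the $F$-identity in Lemma \ref{lem relations on G F U} has this proviso) whereas the diagonal case $j=i$ uses the separate $U$-identity. Once those are distinguished, Steps 2 and 3 are purely algebraic substitutions.
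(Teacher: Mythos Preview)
Your proof is correct and follows precisely the approach the paper indicates in the paragraph preceding the theorem: compute the entries of $D$ via the identity $\Delta h = c(I-P)h$ together with Lemma~\ref{lem h_1 in terms of F}, then invoke the relations from Lemma~\ref{lem relations on G F U} and Remark~\ref{rem reversibility F}. The paper does not write out the computation in full (the details are deferred to \cite{BJ2019}), but your Steps~1--3 are exactly the intended argument.
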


The main results of this section is included in the following theorem.

\begin{theorem}\label{thm formula for monopole w_x}
Let $(G,c)$ be  a transient network, and $x$ a fixed vertex in $V$. Let 
$h_x$ be the function defined in Lemma \ref{lem h_1 in terms of F}, i.e.,
$$
h_x(a) := \mathbb E_a(\chi_{\{x\}} \circ X_{\tau(x)}), \ \ a \in V.
$$
Then the function $a\mapsto w_x(a) :=  \dfrac{\mc G(a,x)}{c(x)}$ is a 
monopole at $x$ and satisfies the relation
\be\label{eq def of w_x}
w_x(a) = \frac{1}{c(x)(1 - U(x,x))}h_x(a),  \ \ a\in V.
\ee
 In other words,  $w_x\in \mc H_E$ and it satisfies the equation $\Delta w_x = \delta_x$.

\end{theorem}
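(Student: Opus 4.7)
The plan is to verify four claims in sequence: (a) the algebraic identity $w_x = h_x/[c(x)(1-U(x,x))]$, (b) the pointwise equation $\Delta w_x = \delta_x$, (c) membership $w_x \in \mc H_E$, and (d) the Hilbert-space reproducing identity $\langle w_x, u\rangle_{\mc H_E} = u(x)$ for all $u \in \mc H_E$. Step (a) is immediate from Lemma \ref{lem relations on G F U} and Lemma \ref{lem h_1 in terms of F}(1): combining $\mc G(a,x) = F(a,x) \mc G(x,x)$, $\mc G(x,x) = 1/(1 - U(x,x))$, and $h_x(a) = F(a,x)$, one reads off
$$
w_x(a) \;=\; \frac{\mc G(a,x)}{c(x)} \;=\; \frac{F(a,x)}{c(x)(1 - U(x,x))} \;=\; \frac{h_x(a)}{c(x)(1 - U(x,x))}.
$$

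For (b), I would use the renewal identity
$$
\mc G(a,x) \;=\; \delta_{ax} \;+\; \sum_{b \sim a} p(a,b)\, \mc G(b,x),
$$
obtained by splitting $\sum_{n \ge 0} p^{(n)}(a,x)$ into its $n=0$ contribution and its Markov-step tail. Since $\Delta = c(I - P)$ (Remark \ref{rem_basic facts on Lplc and Mrkv}(3)), dividing by $c(x)$ yields $\Delta w_x(a) = \frac{c(a)}{c(x)}\,\delta_{ax}$, which equals $\delta_x(a)$ because only $a = x$ contributes.

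Step (c) should follow from (a) together with Lemma \ref{lem energy of h_x}: that bound expresses $\|h_x\|_{\mc H_E}^2$ in terms of $\sum_a \Pr[x \to a](1 - \Pr[a \to x])$, and in a transient network this series is finite (one uses reversibility, $c(x)F(x,a) = c(a)F(a,x)$, and that $1 - F(a,x)$ stays positive so that the "leak to infinity" controls the tail). Hence $h_x$, and therefore $w_x$, lies in $\mc H_E$.

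For (d), which I expect to be the main obstacle, I would apply the Gauss--Green identity (Theorem \ref{thm G-G}) with $v = w_x$: since $w_x$ is a monopole (in the broad, pointwise sense of Proposition \ref{prop of energy space}), it belongs to the dense subspace to which the identity applies, and using $\Delta w_x = \delta_x$ the bulk sum collapses to $u(x)$, giving
$$
\langle u, w_x\rangle_{\mc H_E} \;=\; u(x) \;+\; \sum_{y \in bd(G)} u(y)\,\dfrac{\partial w_x}{\partial n}(y).
$$
The technical core is showing that the boundary term vanishes along the exhaustion. This is an intrinsically transient phenomenon: the Green-kernel representative $w_x(a) = \mc G(a,x)/c(x)$ must decay fast enough on $bd(G_k)$ for the normal-derivative sum to tend to zero. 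Once this decay is secured for $u$ in the dense span of dipoles and monopoles, the reproducing identity extends to all of $\mc H_E$ by density. Combining (b), (c), and (d) then gives that $w_x$ is a monopole in the Hilbert-space sense, and (a) supplies the explicit formula in terms of $h_x$.
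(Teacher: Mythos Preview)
The paper does not actually supply a proof of this theorem; it is deferred to \cite{BJ2019}. So there is no in-paper argument to compare against, and I assess your proposal on its own terms.

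Steps (a) and (b) are correct and routine.

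Step (c) has a real gap. You invoke the bound of Lemma~\ref{lem energy of h_x}, namely $\|h_x\|_{\mc H_E}^2 < \tfrac{1}{2}c(x)\sum_a F(x,a)\bigl(1 - F(a,x)\bigr)$, and assert that this series converges for transient networks. You have not justified that, and in fact it typically fails: for simple random walk on $\Z^d$, $d\ge 3$, one has $F(x,a)\asymp |a-x|^{2-d}$, so $\sum_a F(x,a)$ already diverges, while $1-F(a,x)\to 1$ at infinity and provides no rescue. The bound in Lemma~\ref{lem energy of h_x} is therefore useless for establishing finite energy. The right move is to carry the computation in that proof one step further \emph{without} the crude estimate $F(b,x)^2<F(b,x)$: using $\sum_b p(a,b)F(b,x)=F(a,x)$ for $a\neq x$ and $=U(x,x)$ for $a=x$, the telescoping yields the exact value
\[
\|h_x\|_{\mc H_E}^2 \;=\; c(x)\bigl(1-U(x,x)\bigr),
\]
which is finite precisely when $U(x,x)<1$, i.e.\ in the transient case. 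This also gives $\|w_x\|_{\mc H_E}^2=\mc G(x,x)/c(x)$, matching the Remark following the theorem.

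Step (d) is both unnecessary and circular as written. The theorem's ``in other words'' clause makes explicit that the claim is $w_x\in\mc H_E$ together with the pointwise identity $\Delta w_x=\delta_x$, i.e.\ the monopole condition in the sense of the Remark after Proposition~\ref{prop of energy space}; the Hilbert-space reproducing identity \eqref{eq-def for w_x} is not part of the assertion. Moreover, your appeal to Theorem~\ref{thm G-G} requires $v$ to already lie in the span of dipoles and monopoles, which is exactly what you are trying to prove; and the vanishing of the boundary term is asserted, not argued. In general the Green's-kernel solution $\mc G(\cdot,x)/c(x)$ is only one of many finite-energy solutions of $\Delta w=\delta_x$ (any two differ by an element of $Harm$), so singling it out via a boundary argument would require substantially more work than you indicate.
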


\begin{remark} (1)  It follows from Theorem \ref{thm formula for monopole w_x} that  the monopole $w_x$ has finite energy and
$||w_x||_{\mc H_E} = \dfrac{\mc G(x,x)}{c(x)} \|h_x\|_{\mc H_E}$.

(2) Moreover, we deduce from relation (\ref{eq def of w_x}) the following result:  an electrical network $(G,c)$ is transient if and only if the function $a \mapsto \mc G(a,x)$ has finite energy for every fixed $x\in V$.
\end{remark}

\begin{corollary}\label{cor dipole formula}
Let $x_1, x_2$ be any distinct vertices in $V$. Set
$$
v_{x_1, x_2}(a) := (w_{x_1} - w_{x_2})(a) = \frac{\mc G(a, x_1)}
{c(x_1)}  -  \frac{\mc G(a, x_2)}{c(x_2)}.
$$
 Then $v_{x_1, x_2}$ is a dipole in $\mc H_E$.
\end{corollary}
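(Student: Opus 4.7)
The plan is to deduce this corollary as an essentially immediate consequence of Theorem \ref{thm formula for monopole w_x} combined with the linearity of the energy inner product, and the fact (noted in Proposition \ref{prop of energy space}(4)) that $v_{xy} = w_x - w_y$ whenever monopoles exist in $\mc H_E$.

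First, I would invoke Theorem \ref{thm formula for monopole w_x}: since $(G,c)$ is assumed transient, at each of the two vertices $x_1, x_2 \in V$ there exists a monopole $w_{x_i} \in \mc H_E$, given by the explicit formula $w_{x_i}(a) = \mc G(a, x_i)/c(x_i)$. In particular, $w_{x_1}$ and $w_{x_2}$ satisfy the reproducing identity \eqref{eq-def for w_x}, namely
\[
\langle w_{x_i}, u\rangle_{\mc H_E} = u(x_i), \qquad u \in \mc H_E, \ i=1,2.
\]

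Next, since $\mc H_E$ is a linear space, the function
\[
v_{x_1,x_2}(a) := w_{x_1}(a) - w_{x_2}(a) = \frac{\mc G(a,x_1)}{c(x_1)} - \frac{\mc G(a,x_2)}{c(x_2)}
\]
lies in $\mc H_E$. Using linearity of the inner product and the two monopole reproducing identities,
\[
\langle v_{x_1,x_2}, u\rangle_{\mc H_E} = \langle w_{x_1}, u\rangle_{\mc H_E} - \langle w_{x_2}, u\rangle_{\mc H_E} = u(x_1) - u(x_2)
\]
for every $u \in \mc H_E$. This is precisely the defining property \eqref{eq dipole v_xy} of the dipole $v_{x_1,x_2}$, and since the Riesz representation argument preceding \eqref{eq dipole v_xy} guarantees uniqueness in $\mc H_E$, we conclude that $v_{x_1,x_2}$ as defined by the formula in the corollary is indeed the dipole.

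There is no real obstacle here: the entire content is packed into Theorem \ref{thm formula for monopole w_x}, and once monopoles are in hand, the corollary is formal. One mild bookkeeping point worth noting is that both incarnations of ``dipole'' in the paper (the one defined via $\Delta v_{x_1,x_2} = \delta_{x_1} - \delta_{x_2}$ in Definition \ref{def Laplace Markov}, and the reproducing-kernel version in \eqref{eq dipole v_xy}) are compatible on $\mc H_E$ by Proposition \ref{prop of energy space}(1), so one may equivalently check $\Delta v_{x_1,x_2} = \Delta w_{x_1} - \Delta w_{x_2} = \delta_{x_1} - \delta_{x_2}$ directly from $\Delta w_{x_i} = \delta_{x_i}$.
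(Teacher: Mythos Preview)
Your proposal is correct and matches the paper's intended argument: the corollary is stated immediately after Theorem \ref{thm formula for monopole w_x} with no separate proof, so the implicit derivation is precisely the one you give---subtract the two explicit monopoles furnished by the theorem and invoke Proposition \ref{prop of energy space}(4) (or equivalently the reproducing identity \eqref{eq dipole v_xy}) to recognize the difference as the dipole.
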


We finish this section with another result regarding dipoles.

\begin{theorem}\label{thm dipole formula}
Let $(G, c)$ be  a transient electrical network, and let $x_1, x_2$ be any 
two distinct vertices in $V$ such that the Green's function $G$ (see Lemma 
\ref{lem relations on G F U}) satisfies the relation
$$
\mc G(x_1, x_2) \neq \sqrt{\frac{c(x_2)}{c(x_1)}}.
 $$
 Let $D$ be the matrix defined by (\ref{eq def M}). Then the function
$$
\ol v_{x_1, x_2}(a) = \alpha h_1(a) + \beta h_2(a), \ \ a\in V,
$$
is a dipole defined on $V$, where the coefficients $\alpha$ and $\beta$ are determined as  the solution to the equation
\be\label{eq sol for M}
D\left(
   \begin{array}{c}
     \alpha \\
     \beta \\
   \end{array}
 \right) = \left(
   \begin{array}{r}
   1\\
    -1\\
   \end{array}
 \right)
\ee
Equivalently, we can write
$$
\ol v_{x_1, x_2}(a) = [h_1(a), h_2(a)]D^{-1}\left(
   \begin{array}{r}
   1\\
    -1\\
   \end{array}
 \right), \qquad a\in V.
$$
\end{theorem}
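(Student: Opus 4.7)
The plan is to verify directly, via the characterizing equation $\Delta v_{x_1,x_2} = \delta_{x_1} - \delta_{x_2}$ (Proposition \ref{prop of energy space}(1)), that $\ol v_{x_1,x_2}$ is the (unique) dipole in $\mc H_E$. Three things must be checked: (i) the coefficients $\alpha, \beta$ are well-defined, (ii) $\ol v_{x_1,x_2}$ lies in $\mc H_E$, and (iii) the Laplacian equation holds pointwise on $V$.

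For (i), the non-degeneracy hypothesis $\mc G(x_1,x_2) \neq \sqrt{c(x_2)/c(x_1)}$ combined with Theorem \ref{lem matrix M} gives $\det D \neq 0$, so $D^{-1}$ exists and \eqref{eq sol for M} uniquely determines $\alpha, \beta$. For (ii), I would invoke transience: Theorem \ref{thm formula for monopole w_x} produces the monopole $w_{x_i} \in \mc H_E$ and the identity $w_{x_i} = \bigl(c(x_i)(1-U(x_i,x_i))\bigr)^{-1} h_{x_i}$, so each $h_i$ is a nonzero scalar multiple of an element of $\mc H_E$, hence $h_i \in \mc H_E$ and $\ol v_{x_1,x_2} = \alpha h_1 + \beta h_2$ belongs to $\mc H_E$.

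For (iii), I would split the pointwise computation of $\Delta \ol v_{x_1,x_2}$ into two cases. If $a \in V \setminus \{x_1,x_2\}$, Lemma \ref{lem h_1 in terms of F}(2) gives $(\Delta h_i)(a) = 0$ for $i = 1,2$, so $(\Delta \ol v_{x_1,x_2})(a) = 0 = (\delta_{x_1} - \delta_{x_2})(a)$. For $a \in \{x_1,x_2\}$, observe that the column vector $\bigl((\Delta\ol v_{x_1,x_2})(x_1), (\Delta\ol v_{x_1,x_2})(x_2)\bigr)^T$ equals precisely $D\bigl(\alpha,\beta\bigr)^T$, which by \eqref{eq sol for M} equals $(1,-1)^T = \bigl((\delta_{x_1}-\delta_{x_2})(x_1), (\delta_{x_1}-\delta_{x_2})(x_2)\bigr)^T$. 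This confirms $\Delta \ol v_{x_1,x_2} = \delta_{x_1} - \delta_{x_2}$ on all of $V$.

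Finally, by the uniqueness of dipoles in $\mc H_E$ (from the Riesz representation argument following \eqref{eq dipole v_xy}, and Proposition \ref{prop of energy space}(1)), the identity $\ol v_{x_1,x_2} = v_{x_1,x_2}$ follows, and the equivalent matrix formula is just a rewriting of $\alpha h_1 + \beta h_2$ using $\alpha, \beta$ from $D^{-1}(1,-1)^T$. There is no genuine obstacle here; the only delicate ingredient is step (ii), where we must use transience, not a direct application of Lemma \ref{lem energy of h_x} (which only yields an upper bound), to guarantee $h_x \in \mc H_E$.
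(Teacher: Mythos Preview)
Your steps (i)--(iii) are correct and constitute a complete proof of the theorem as stated. The paper itself does not include a proof of this result (it refers to \cite{BJ2019}), so there is no approach to compare against; your direct verification is exactly what is needed.

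However, your final paragraph contains an overreach that contradicts the paper. You claim that ``by the uniqueness of dipoles in $\mc H_E$\ldots the identity $\ol v_{x_1,x_2} = v_{x_1,x_2}$ follows.'' This is not valid: uniqueness of the Riesz representative $v_{x_1,x_2}$ satisfying $\langle v_{x_1,x_2}, u\rangle_{\mc H_E} = u(x_1)-u(x_2)$ does \emph{not} imply that every element of $\mc H_E$ solving $\Delta v = \delta_{x_1}-\delta_{x_2}$ equals $v_{x_1,x_2}$. Indeed, if $\mc{Harm}_0 = \mc{Harm}\cap\mc H_E$ is nontrivial (which is possible for transient networks), then $v_{x_1,x_2}+h$ solves the same Laplace equation for any $h\in\mc{Harm}_0$. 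The paper's own remark immediately following the theorem makes exactly this point: the difference $v_{x_1,x_2}-\ol v_{x_1,x_2}$ is a harmonic function, generally nonzero. The theorem only asserts that $\ol v_{x_1,x_2}$ is \emph{a} dipole in the pointwise sense of Remark (2) after Proposition \ref{prop of energy space}, and your argument through step (iii) already delivers that. Simply drop the uniqueness claim.
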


\begin{remark} It follows from Corollary \ref{cor dipole formula} and 
Theorem \ref{thm dipole formula} that the functions $v_{x_1,x_2}$ and $
\ol v_{x_1,x_2}$
satisfy the same equation. Therefore their difference $f =  v_{x_1,x_2} - 
\ol v_{x_1,x_2}$ is a harmonic function which again is a linear combination 
of $h_1$ and $h_2$.

\end{remark}

\section{Combinatorial diagrams and electrical networks}
\label{sect Bratteli}

\subsection{Fundamentals on Bratteli diagrams}
 In the introduction, we described the notion of a Bratteli diagram 
considering it as an infinite graded graph. We give here the detailed 
definition and discuss the two principal cases: (i) finite number of 
vertices at each level, and (ii) countably many vertices at each level.
In case (i), we deal with ``classical'' Bratteli diagrams, and case (ii) is
considered as  generalized Bratteli diagrams.
The following definition is given for the second case, but it can be easily 
adopted to case (i). The notion of Bratteli diagrams is widely used in 
operator algebras, dimension groups, the theory of dynamical systems,
and other adjoint areas. The reader can find more information in 
\cite{Bratteli1972}, \cite{GiordanoPutnamSkau1995}, 
\cite{HermanPutnamSkau1992}, \cite{Durand2010}, 
\cite{BezuglyiJorgensen2015}, \cite{BezuglyiKarpel2016}, 
\cite{BezuglyiKarpel2020}, \cite{BJ-2020}, \cite{Putnam2018}.

The results given below in Section \ref{sect Bratteli} are taken mostly from
\cite{BJ2019} and \cite{BJ-2020}.

\begin{definition}\label{def GBD}
Let $V_0$ be a countable vertex set (which may be identified with either 
$\N$ or $\Z$ when it is convenient).
Set $V_i = V_0$ for all $i \geq 1$. A countable graded graph   
$B = (V, E)$ is called a \textit{generalized Bratteli diagram} if it 
satisfies the following properties.

(i) The set of vertices $V$ of $B$ is a disjoint union of subsets $V_i$:
$V= \bigsqcup_{i=0}^\infty  V_i$. 

(ii) The set of edges $E$ of $B$ is represented as $\bigsqcup_{i=0}^\infty  
E_i$ where $E_i$ is the subset of edges connecting vertices of the levels 
$V_i$ and  $V_{i+1}$. 

(iii) For every $w \in V_i, v \in V_{i+1}$, the set of edges $E(w, v)$  
between the vertices $w$ and $v$ is finite (or empty); set $|E(w, v)| =  
f^{(i)}_{vw}$. It defines a sequence of infinite (countable-by-countable) 
\textit{incidence  matrices} $(F_n ; n \in \N_0)$ whose entries are 
non-negative  integers: 
$$
F_i = (f^{(i)}_{vw} : v \in V_{i+1}, w\in V_i),\ \   f^{(i)}_{vw} 
 \in \N_0.
$$ 

(iv) The matrices 
$F_i$ have  \textit{finitely many non-zero entries in each row}. 

(v) The maps  $r,s : E \to V$ are defined on the diagram $B$: 
for every $e \in E$, there are $w, v$ such that $e \in E(w, v)$; then $s(e) =
w$ and$r(e) = v$. They are called the \textit{range} ($r$) and 
\textit{source} ($s$) maps. 

(vi) For  every $w \in V_i, \; i \geq 0$,
there exists an edge $e \in E_i$ such that $s(e) = w$ and edge $e' \in 
E_{i-1}$ such that $r(e') = w$. In other 
words, every incidence matrix $F_i$ has no zero row and zero column. 
\end{definition}

\begin{remark}
 (1) If $V_0$ is a singleton, and each $V_n$ is a finite  set, then we obtain   
 the standard definition of a Bratteli diagram originated in 
 \cite{Bratteli1972}. Later it was used in the theory
 of $C^*$-algebras  and dynamical systems for solving some 
 classification  problems  and the construction of models of transformations 
 in ergodic theory, Cantor, and Borel dynamics (the references are given
 above and in Introduction \ref{sect Introduction}). 

(2) Property (iv) of Definition \ref{def GBD} allows to multiply matrices 
$F_n$. We emphasize that no restriction on the entries of
columns of  the incidence matrices $F_n$ are assumed in the case when
generalized Bratteli diagram is considered in the framework of dynamical
systems. But interpreting $B = (V,E)$ as a network, we will usually assume
that every vertex has finitely many neighbors.

(3)  It follows from Definition \ref{def GBD} that every generalized 
Bratteli diagram is uniquely determined by a sequence of matrices $(F_n)$
such that every matrix satisfies (iii) and (iv). When such a sequence is 
given, one can easily restore a Bratteli diagram with the prescribed
incidence matrices. 
For this, one uses the rule that the entry $\ent$ indicates  the
 number of edges between the vertex $w \in V_n$ and vertex 
 $v\in V_{n+1}$. It defines the set $E(w, v)$; then one takes 
$$E_n = \bigcup_{w\in V_n, v \in V_{n+1}} E(w, v)$$

(4) An important particular case of a generalized Bratteli diagram is 
obtained when all incidence matrices $F_n$ are the same, $F_n = F$ 
for all $n\in \N_0$. Then, the generalized Bratteli diagram  $B =B(F)$ is
 called \textit{stationary.}
 \end{remark}

On Figure 1, we give an example of a generalized Bratteli diagram. 
This example is a small (finite) part of a diagram since every Bratteli
diagram has infinitely many levels and every level is a countably 
infinite set. We note that horizontal lines on Figure 1 are not edges, they
are used to show the levels of a diagram.

\begin{figure}[!htb]\label{fig Br D} 
\centering
  \includegraphics[width=0.55\textwidth, height=0.45\textheight]
  {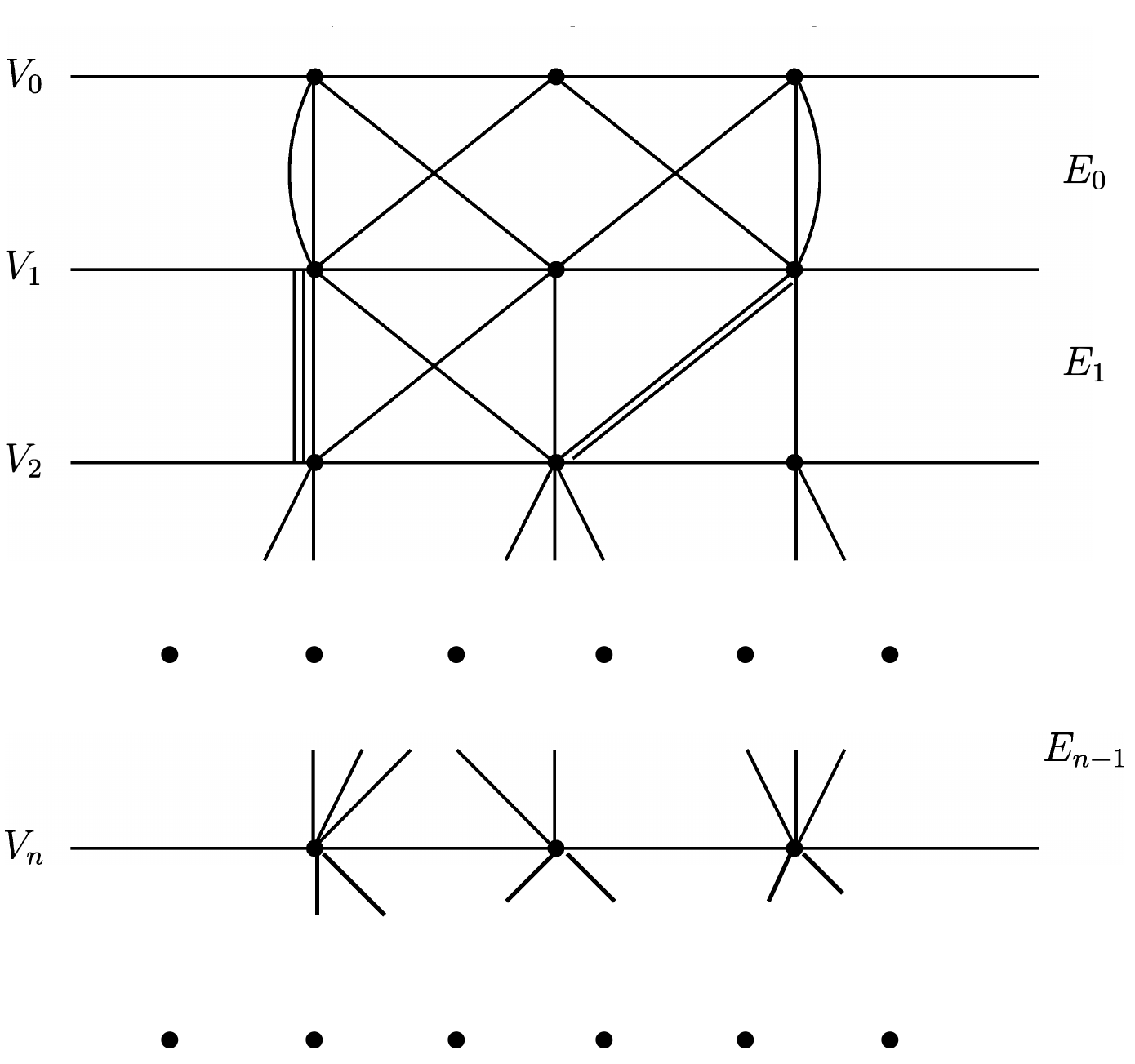}
 % {BD.jpg}
  \caption{Example of a Bratteli diagram: levels, verices, and edges
  (see Definition \ref{def GBD})}
  \label{fig:kernels}
\end{figure}

\begin{definition}\label{def path space}
A finite or infinite \textit{path} in a Bratteli diagram $B = (V,E)$ is a
 sequence of edges $(e_i : i \geq 0)$ such that $r(e_i) = s(e_{i+1})$. 
Denote by $X_B$ the set of all infinite paths. Every finite path $\ol e =
(e_0, ... , e_n)$ determines a cylinder subset $[\ol e]$ of $X_B$:
$$
[\ol e] := \{x = (x_i) \in X_B : x_0 = e_0, ..., x_n = e_n\}.
$$
The collection of all cylinder subsets forms a base of neighborhoods  for a
topology on $X_B$. In this topology, $X_B$ is a  Polish 
zero-dimensional space and every cylinder set is clopen. 
The metric on $X_B$ can be defined by the formula: 
$$
\mathrm{dist}(x, y) = \frac{1}{2^N},\ \ \ N = \min\{i \in \N_0 : x_i 
\neq y_i\}.
$$
\end{definition}

\begin{remark} 
(1) It follows from Definition \ref{def path space} that  $X_B$ is a
 standard Borel space. In  case (i) of classical Bratteli diagrams, the space
  $X_B$ is compact. 

(2) For a generalized Bratteli diagram, i.e., case (ii) is considered, every 
point $x = (x_i) \in X_B$  is  represented as follows:
$$
\{x\} = \bigcap_{n\geq 0} [\ol e]_n
$$ 
where $[\ol e]_n = [x_0, ... ,x_n]$. But, in general, it is not true that 
any nested sequence of cylinder sets determines a point in 
 $X_B$ because the intersection  may be empty. 

(3) Considering $X_B$ as a zero-dimensional metric space, we will 
\textit{assume} 
that the diagram $B$ is chosen so that the space $X_B$ 
\textit{has no isolated points}.  

 \end{remark}

\begin{definition}\label{def irreducible}
It is  said  that a generalized 
Bratteli diagram $B$ is \textit{irreducible} if, for any two 
vertices $v$ and 
$w$,  there exists a level $V_m$ such that $v \in V_0$ and $w\in V_m$ 
are connected by a finite path. This is equivalent to the property that,
for any fixed $v,w$, there exists $m \in \N$ such that the product
of matrices $F_{m-1} \cdots F_0$ has non-zero $(w,v)$-entry.
\end{definition}

\medskip

In this paper, we focus on the case of \textit{$0-1$ Bratteli diagrams}, 
i.e., the entries of matrices $F_n$ are either 0 or 1. There exists a simple
procedure that allows to transform any Bratteli diagram to this case
\cite{GiordanoPutnamSkau1995}.

In what follows we discuss the question about conditions under which 
a countable locally finite graph $G$ can be represented as a Bratteli 
diagram with finite levels.

For a finite path $\gamma(x, y)$ between $x, y \in V$, define its length 
$\ell(\gamma)$ as the number of edges from $E$ that form $\gamma$. 
Define
$$
\mathrm{dist} (x,y) = \min\{ \ell(\gamma) : \gamma\in E(x, y)\},
$$
where $E(x, y)$ is the set of all finite paths $\gamma$ from $x$ to $y$.

\begin{proposition}\label{prop  graph is BD}
(1) A connected locally finite graph $G(V, E)$ has the structure of a Bratteli 
diagram if and only if:

(i) $\mathrm{deg}(x) \geq 2$ for all but at most one $o\in V$;

(ii) there exists a vertex $x_0 \in V$ such that, for any $n \geq 1$, there are 
no edges between any vertices from the set $V_n :=\{y \in V : 
\mathrm{dist} (x_0, y) = n \}$;

(iii) for any vertex $x \in V_n$ there exists an edge $e_{(xy)}$ connecting 
$x$ with
some vertex $y \in V_{n+1}, n \in \N$.

(2) In general, the vertex $x_0$ is not unique: there are graphs  $G(V, E)$ 
that satisfy  (i) - (iii) for different vertices $x_0$ and  $y_0$ from $V$.
\end{proposition}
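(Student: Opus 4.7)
The plan is to handle (1) by verifying both implications separately and then to exhibit an explicit graph realising the non-uniqueness in (2). The unifying idea is that the levels $V_n$ of a Bratteli diagram coincide with the spheres around the root in the graph metric; once that is recognised, conditions (i)--(iii) translate essentially tautologically into the axioms of Definition \ref{def GBD}.

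For the forward direction, I would assume $G$ already carries a Bratteli diagram structure $V = \bigsqcup V_n$ with $V_0 = \{o\}$ and set $x_0 := o$. The key observation is that any walk in a Bratteli diagram shifts the level index by $\pm 1$ at each step, so $\mathrm{dist}(o,v) \geq n$ whenever $v \in V_n$, while the ``straight-down'' path supplied by property (vi) realises this lower bound. Hence $V_n = \{y : \mathrm{dist}(x_0, y) = n\}$, which yields (ii) immediately (no edges within a level in a Bratteli diagram) and (iii) directly from (vi). For (i), each vertex in $V_n$ with $n \geq 1$ has at least one predecessor and one successor by (vi), so $\deg \geq 2$; only the root $o$ may fail this.

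For the converse, I would define $V_n := \{y \in V : \mathrm{dist}(x_0, y) = n\}$ and let $E_n$ be the edges connecting $V_n$ to $V_{n+1}$. Connectedness yields $V = \bigsqcup_n V_n$ and $V_0 = \{x_0\}$, and local finiteness gives inductively that each $V_n$ is finite (as a finite union of finite neighbourhoods of vertices in $V_{n-1}$). The core technical step is to show $E = \bigsqcup_n E_n$: the triangle inequality forces every edge $(v,w)$ with $v \in V_n$, $w \in V_m$ to satisfy $|m-n| \leq 1$, and then (ii) excludes $m = n \geq 1$ while $V_0$ is a singleton. A shortest-path argument produces a predecessor in $V_n$ for every $w \in V_{n+1}$ (no zero columns), (iii) supplies a successor (no zero rows), and local finiteness of $G$ gives property (iv). It remains to observe that the unique vertex of degree $<2$ permitted by (i) must coincide with $x_0$: any other $y \in V_n$ with $n \geq 1$ inherits a predecessor from the shortest path and a successor from (iii), forcing $\deg(y) \geq 2$.

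For (2) I would take the two-way infinite path $V = \Z$ with edges $\{(n, n+1) : n \in \Z\}$. Every vertex has degree $2$, the spheres $V_n = \{x_0 - n, x_0 + n\}$ (for $n \geq 1$) carry no internal edge, and each $x_0 \pm n$ connects to $x_0 \pm (n+1)$ in the next sphere; thus (i)--(iii) hold for \emph{any} choice of $x_0 \in \Z$. The main obstacle I anticipate is not any single computation but the bookkeeping in the backward direction: one has to ensure simultaneously that the vertex $x_0$ from (ii) really plays the role of root, that (ii) together with the triangle inequality rules out edges within or across non-adjacent levels, and that the exceptional low-degree vertex allowed by (i) is forced to coincide with $x_0$.
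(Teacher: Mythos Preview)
The paper states this proposition without proof (the results of the section are attributed to \cite{BJ2019} and \cite{BJ-2020}), and only follows it with the illustrative remark that $\Z^d$ for $d\geq 2$ and certain Cayley graphs satisfy the hypotheses. So there is nothing in the paper to compare against beyond that.

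Your argument is correct and is the natural one: the crux is the identification of the Bratteli levels with the metric spheres around the root, after which (i)--(iii) and Definition~\ref{def GBD} translate into one another. Two small remarks. First, in the converse direction condition (i) is not actually needed as a separate hypothesis: your final observation (any $y\in V_n$ with $n\geq 1$ has a predecessor from a shortest path and a successor from (iii), hence $\deg(y)\geq 2$) already derives (i) from (ii), (iii), and connectedness. Second, your example $\Z$ for part (2) is correct and in fact cleaner than the paper's $\Z^d$ examples, which are given only as illustrations of part (1) and not specifically to demonstrate non-uniqueness of $x_0$.
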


To illustrate Proposition \ref{prop  graph is BD}, we note that $\Z^d, d 
\geq 2$, can be represented as a Bratteli diagram. Another example of this
kind is the Cayley graph $\mc C(S)$ of finitely generated group $F$ with 
a finite generating set $S\subset F$ such that $S = S^{-1}$ and $SS \cap
 S= \emptyset$. 
 
 \subsection{Weighted Bratteli diagrams} 
 Let $B = (V, E)$ be a graph which is represented by a Bratteli diagram  
 with \textit{finite levels} $V_n$ 
 for all $n$ which is constructed by the incidence matrices $F_n$. 
 The case of countably
infinite sets $V_n$ will be considered later.  Recall that we denote
 by $A_n$ the matrix transpose to  $F_n$. We assume that 
 $B = (V,E)$ is a 0-1 Bratteli diagram, i.e., every entry of $A_n$ is either 
 0 or 1. 

The conductance function $c$  is defined on $E$ and takes positive value 
at every edge $e$. Since every edge $e$ is uniquely determined by a pair 
of vertices $(x,y)$, we write also $c_e = c_{xy} = c_{yx}$. Based on the
structure of the vertex set $V = \coprod_{n\geq 0}V_n$ and edge set 
$E= \coprod_{n\geq 0}E_n$ of the Bratteli diagram $B$,
we define a sequence of matrices $(C_n)_{n \ge 0}$, $C_n = 
(c^{(n)}_{xy})$, which is naturally 
related to the matrices $(A_n)$ and the conductance function $c$:
$$
 c^{(n)}_{xy} := \begin{cases} c_{xy},\  & x =s(e), y=r(e), e \in E_n\\
 0, & \mbox{otherwise}\\
\end{cases}
$$
Then $c^{(n)}_{xy} > 0$  if and only if $a^{(n)}_{xy} =1$ and the size of 
$C_n$ is $|V_n| \times |V_{n+1}|, n \ge 0$.
In particular, $C_0 $ is a row matrix with entries $(c^{(0)}_{ox} : x \in 
V_1)$ where $o$ is the root of the diagram. It is helpful to remember that
 for every $n$, the matrix $C_n$ determines a linear transformation 
 $C_n : \R^{|V_{n+1}|}  \to \R^{|V_{n}|}$.

We note that the order of indexes in $c^{(n)}_{xy}$ is important: although 
the values of the conductance function $c$ depend on edges only, the entry  
$c^{(n)}_{yx}$ belongs to the matrix $C_n^T$  transpose of $C_n$.

It is said that the sequence of matrices $(C_n)$ is {\em  associated to the 
weighted Bratteli diagram $(B,c)$}.

Together with the sequence of associated matrices $(C_n)$, we will consider 
two other sequences of matrices. They are denoted by 
$(\overleftarrow{P}_n)$ and $(\overrightarrow{P}_{n-1})$, and their 
entries are defined by the formulas
\be\label{eq P_n <-}
\overleftarrow{p}_{xz}^{(n)} = \frac{c^{(n)}_{xz}}{c_n(x)}, \ \ x\in 
V_n, z \in V_{n+1},
\ee
\be\label{eq P_n ->}
\overrightarrow{p}_{xy}^{(n-1)} = \frac{c^{(n-1)}_{yx}}{c_n(x)}, \ \ x
\in V_n, y \in V_{n-1}.
\ee
This means, in particular,  that $\overleftarrow{P}_0$ is a row matrix, and,  
for all $n$, $\overrightarrow{P}_{n} = \overleftarrow{P}_{n+1}^T$ 
where  $T$ stands for the transpose matrix.

\begin{remark}
(1) We represent a Bratteli diagram as an infinite graph that is expanding 
in the ``horizontal'' direction from left to right, see Figure 2.

\begin{figure}[ht!]
\begin{center}
\includegraphics[scale=0.6]{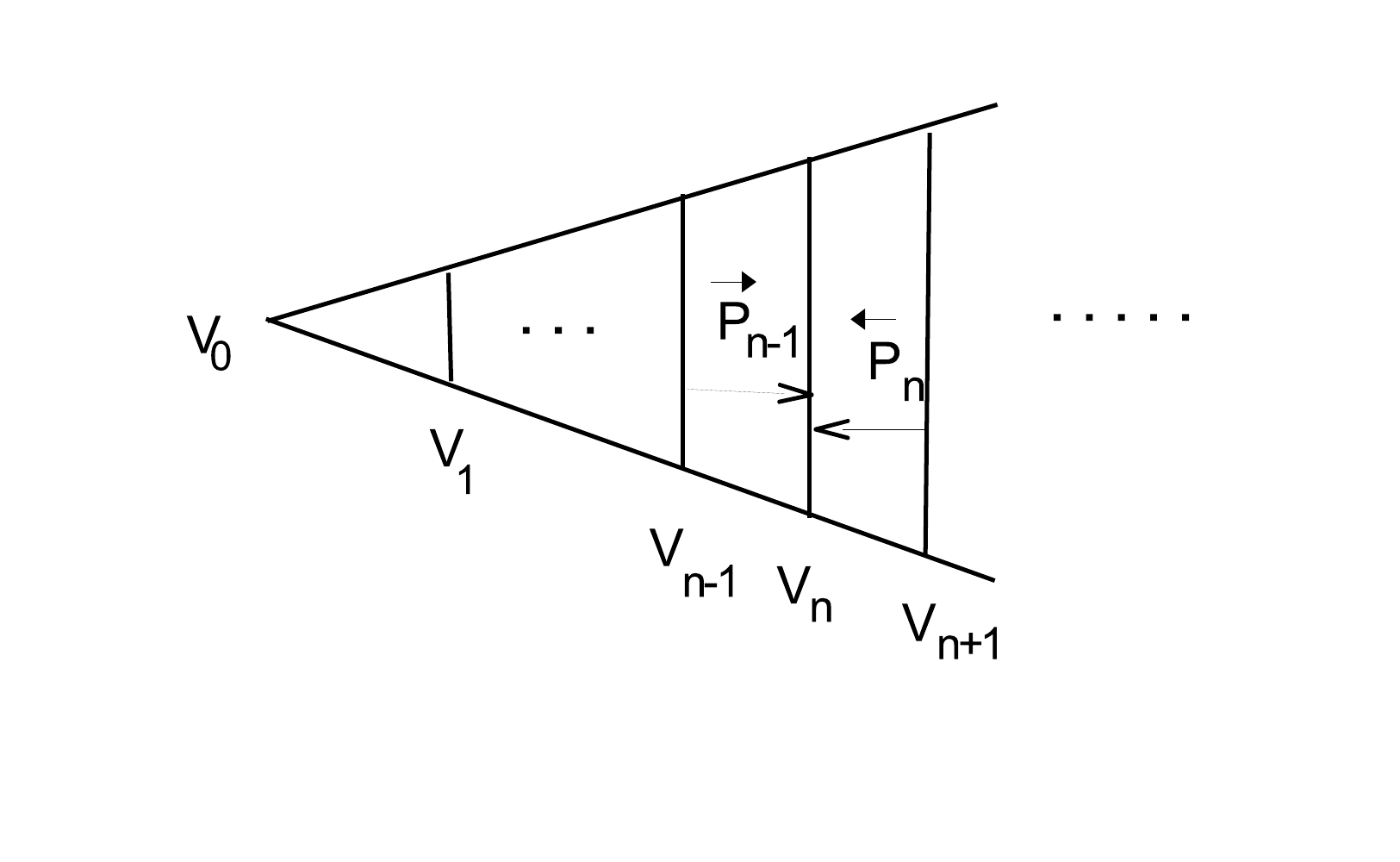}
\caption{Matrices  acting on a Bratteli diagram.}
\label{BD_via_matrices}
\end{center}
\end{figure}

Then the arrows used in the notation of the matrices show how the 
transformations defined by the matrices act: $\overleftarrow{P}_n$ sends 
$\R^{|V_{n+1}|}$ to $\R^{|V_{n}|}$ and $\overrightarrow{P}_{n-1}$ 
sends $\R^{|V_{n-1}|}$ to $\R^{|V_{n}|}$, 

(2) The matrix $P$ of transition probabilities has a simple form. It can be schematically represented as follows
\be\label{eq P in BD}
P = \left(
  \begin{array}{cccccc}
    0 & \overleftarrow{P}_0 & 0 & 0 & \cdots &\cdots \\
    \overrightarrow{P}_0 & 0 & \overleftarrow{P}_1 & \cdots &\cdots \\
    0 & \overrightarrow{P}_1 & 0 & \overleftarrow{P}_2 & \cdots &\cdots \\
    0 & 0 & \overrightarrow{P}_2 & 0 & \overleftarrow{P}_3 & \cdots  \\
    \cdots &\cdots &\cdots &\cdots  &  \cdots &\cdots \\
  \end{array}
\right).
\ee
Here every entry $P_{ij}, i,j = 0,1,2, ... ,$ corresponds to a block matrix 
whose rows are enumerated by vertices from $V_i$ and columns are 
enumerated by vertices from $V_j$.
\end{remark}

\subsection{Existence of harmonic functions on a Bratteli diagram}
In this subsection, we give some algebraic criteria under which
a function $f$, defined on the set of vertices, is  harmonic, i.e., 
$pf = f$ where $P$ is as in \eqref{eq P in BD}.

Let $D_n$ be the diagonal matrix with $d^{(n)}_{xx} = c(x), x \in V_n$.
Every function $f$ on the vertices of $B = (V, E)$ is represented as a
sequence of vectors $(f_n : n \in \N_0)$.

\begin{proposition}\label{thm ness-suff matrix cond for harm fns}
(1) Let $(B, c) = (V,E, c)$ be a weighted 0-1 Bratteli diagram with 
conductance function $c$. Let $(C_n)$ be  the sequence of matrices  
associated to $(B,c)$. Then a function $f : V\to \R$ is harmonic if and only 
the sequence of vectors $(f_n)$ where $f_n = f|_{V_n}$ satisfies
\be\label{D_n and C_n represents harm fn}
C_{n} f_{n+1} = D_n f_n - C_{n-1}^T f_{n-1},\ \ \ n\in \N.
\ee

(2) Let the matrices $(\overrightarrow{P}_n)$ and $(\overleftarrow{P}_n)$
are defined as in \eqref{eq P_n <-} and \eqref{eq P_n ->}. Then a 
sequence of vectors $(f_n)$
 ($f_n \in \R^{|V_n|}$) represents a harmonic function 
 $f  = (f_n) : V \to \R$ if and only if for any $n\geq 1$
\be\label{eq ness-suff for harmonic f-n}
f_n - \overrightarrow{P}_{n-1} f_{n-1} = \overleftarrow{P}_n f_{n+1}.
\ee
\end{proposition}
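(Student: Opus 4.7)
The plan is to exploit the fact that in a Bratteli diagram edges only join consecutive levels, so that for any vertex $x\in V_n$ with $n\ge 1$ the neighbor set $\{y:y\sim x\}$ decomposes into a part lying in $V_{n-1}$ and a part lying in $V_{n+1}$. This allows the pointwise condition $\Delta f(x)=0$ to be rewritten level-by-level as a three-term recursion between the block-vectors $f_{n-1}$, $f_n$, and $f_{n+1}$.

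For part (1), I would fix $x\in V_n$ and unfold the defining formula
\[
(\Delta f)(x)=\sum_{y\sim x}c_{xy}\bigl(f(x)-f(y)\bigr)=c(x)f(x)-\sum_{y\sim x,\,y\in V_{n-1}}c_{xy}f(y)-\sum_{z\sim x,\,z\in V_{n+1}}c_{xz}f(z).
\]
The first term is exactly the $x$-th entry of $D_n f_n$. The sum over $z\in V_{n+1}$ is, by definition of the associated matrix, the $x$-th entry of $C_n f_{n+1}$, since the row of $C_n$ indexed by $x\in V_n$ has entries $c^{(n)}_{xz}=c_{xz}$. For the sum over $y\in V_{n-1}$, I use symmetry $c_{xy}=c_{yx}$: the entry of $C_{n-1}^T$ in row $x\in V_n$, column $y\in V_{n-1}$ is $c^{(n-1)}_{yx}=c_{yx}=c_{xy}$, and therefore this sum equals $(C_{n-1}^T f_{n-1})_x$. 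Setting $\Delta f(x)=0$ and collecting gives the identity \eqref{D_n and C_n represents harm fn}. The converse is immediate: if the block equation holds at every level $n\ge 1$, then evaluating at each coordinate $x\in V_n$ reverses the computation and yields $\Delta f(x)=0$.

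For part (2), the observation is that $D_n$ is the diagonal matrix of total conductances, and all entries $c(x)$ are strictly positive (by local finiteness and the standing assumption that every vertex has a neighbor). Multiplying the identity \eqref{D_n and C_n represents harm fn} on the left by $D_n^{-1}$ converts $D_n^{-1}C_n$ into the row-stochastic block $\overleftarrow{P}_n$ (since $\overleftarrow{p}^{(n)}_{xz}=c_{xz}/c(x)$), and $D_n^{-1}C_{n-1}^T$ into $\overrightarrow{P}_{n-1}$ (since $\overrightarrow{p}^{(n-1)}_{xy}=c_{yx}/c(x)$). Rearranging yields exactly \eqref{eq ness-suff for harmonic f-n}. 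Equivalently, one can see this directly from Remark~\ref{rem_basic facts on Lplc and Mrkv}(3), which says $\Delta=c(I-P)$: harmonicity of $f$ is equivalent to $Pf=f$, and the block form of $P$ in \eqref{eq P in BD} gives the row corresponding to $V_n$ as $\overrightarrow{P}_{n-1}f_{n-1}+\overleftarrow{P}_n f_{n+1}$.

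The proof is essentially bookkeeping, and the only subtle point is getting the orientations of the matrices straight: the edges between $V_{n-1}$ and $V_n$ are encoded by $C_{n-1}$ (viewed as a map $\mathbb R^{|V_n|}\to\mathbb R^{|V_{n-1}|}$), so the contribution of $f_{n-1}$ at a vertex $x\in V_n$ must use the transpose $C_{n-1}^T$. Once this convention is fixed, the symmetry $c_{xy}=c_{yx}$ supplies the matching needed, and both equivalences are obtained by term-by-term verification rather than any deeper argument.
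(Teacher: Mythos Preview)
Your argument is correct: the paper states this proposition without proof (referring instead to \cite{BJ2019}), and your direct level-by-level unfolding of $(\Delta f)(x)=0$ together with the identification $D_n^{-1}C_n=\overleftarrow{P}_n$ and $D_n^{-1}C_{n-1}^T=\overrightarrow{P}_{n-1}$ is exactly the natural verification one would expect. There is nothing to add.
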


Harmonic functions exist on a Bratteli diagram if the following criterion holds.
 Let  $\mathcal N_1$ be the solution set of 
$C_0 f_1 =0$ and $\mathcal N_2 = \{f_2 \in \R^{|V_2|} : 
\overleftarrow{P}_1 f_2 \in \mathcal N_1\}$. 
We define $\mathcal N_{n+1} = \{ f_{n+1} \in \R^{|V_{n+1}|} : 
\overleftarrow{P}_{n} f_{n+1} \in \mathcal G_{n}\}$ where $\mathcal 
G_{n} = \{ f_{n} - \overrightarrow{P}_{n-1} f_{n-1} \in \R^{|V_{n}|} :  
f_{n} \in \mathcal N_{n},\ f_{n-1} \in \mathcal N_{n-1}\}$.
Then we have the following result. 

\begin{proposition} \label{prop suff cond for exist harm fns}
The space $\mathcal Harm$ of  harmonic functions on a weighted Bratteli 
diagram $(B,c)$ is nontrivial if and only if  for every $n$
\be\label{suff cond notrivial harm fns}
\mbox{Col}(\overleftarrow{P}_n) \cap \mathcal G_n \neq  \{0\}
\ee
where $\mbox{Col}(\overleftarrow{P}_n) $ is the column space of 
$\overleftarrow{P}_n$.
In particular, if $\mbox{Rank}(\overleftarrow{P}_n) = |V_n|$ for all $n\geq 
1$ then (\ref{suff cond notrivial harm fns}) is automatically satisfied.
\end{proposition}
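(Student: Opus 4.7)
The plan is to recast the problem as inductive solvability of a two-step linear recursion. By Proposition \ref{thm ness-suff matrix cond for harm fns}(2), a sequence $(f_n)$ represents a harmonic function on $(B,c)$ if and only if, at every level $n\geq 1$,
\[
\overleftarrow{P}_n f_{n+1} = f_n - \overrightarrow{P}_{n-1} f_{n-1}.
\]
Reading this as an equation for the unknown $f_{n+1}$, it has a solution precisely when the right-hand residual belongs to $\mbox{Col}(\overleftarrow{P}_n)$. Thus producing a harmonic function amounts to solvability of this equation simultaneously at every level, with the added constraint that each $f_{n+1}$ must itself admit further continuation.

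The key step is to verify by induction on $n$ that the sets $\mathcal N_n$ and $\mathcal G_n$ defined just before the proposition are, respectively, the collection of all admissible values of $f_n$ and the collection of all residuals $f_n - \overrightarrow{P}_{n-1} f_{n-1}$ arising from compatible initial segments $(f_0, f_1,\ldots, f_n)$ of some harmonic sequence. The base case matches the harmonicity equation at the root, encoded by $C_0 f_1 = 0$, so $\mathcal N_1$ is precisely the admissible set at level one. For the inductive step: given $f_{n-1}\in \mathcal N_{n-1}$ and $f_n\in \mathcal N_n$, the residual lies in $\mathcal G_n$ by construction, and a continuation $f_{n+1}$ can be adjoined iff this residual lies in $\mbox{Col}(\overleftarrow{P}_n)$, in which case $f_{n+1}\in \mathcal N_{n+1}$ by the definition of $\mathcal N_{n+1}$.

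Given this identification, both directions of the equivalence follow. If $\mathcal Harm$ is nontrivial, fix a non-constant harmonic sequence $(f_n)$; its residual at each level furnishes a nonzero element of $\mathcal G_n \cap \mbox{Col}(\overleftarrow{P}_n)$. Conversely, if the intersection is nontrivial at every level, selecting a nonzero element and lifting it to a preimage under $\overleftarrow{P}_n$ yields a sequence $(f_n)$ solving the recursion; the nonzero residual guarantees that $(f_n)$ is not the constant sequence. The final clause is immediate: $\mbox{Rank}(\overleftarrow{P}_n) = |V_n|$ forces $\mbox{Col}(\overleftarrow{P}_n) = \R^{|V_n|}$, so the intersection equals $\mathcal G_n$ and the criterion reduces to $\mathcal G_n \neq \{0\}$, which holds automatically for $n$ large enough under mild irreducibility.

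The main obstacle will be the coherent-lifting step. Solvability at each isolated level does not by itself produce an infinite compatible sequence, since a choice of $f_{n+1}$ may preclude later continuation. The honest argument requires that at each stage one choose $f_{n+1}$ inside $\mathcal N_{n+1}$, i.e., that at least one preimage under $\overleftarrow{P}_n$ of the selected residual lies in the admissible set for the next level. This is exactly what the recursive definition $\mathcal N_{n+1} = \{f_{n+1} : \overleftarrow{P}_n f_{n+1} \in \mathcal G_n\}$ guarantees, so the inductive bookkeeping closes; a König-style selection argument (using finiteness of each $V_n$) then assembles the chosen preimages into a single infinite non-constant sequence $(f_n)$.
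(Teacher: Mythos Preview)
The paper does not actually supply a proof of this proposition; it is quoted from \cite{BJ2019}, with only the recursive definitions of $\mathcal N_n$ and $\mathcal G_n$ given as setup. So there is nothing in the text to compare your argument against, and the assessment below is on the internal merits of your proposal.

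Your outline has the right shape, but two steps do not close as written.

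\textbf{The sets $\mathcal N_n,\mathcal G_n$ are not what you say they are.} You identify $\mathcal G_n$ with the residuals $f_n-\overrightarrow P_{n-1}f_{n-1}$ arising from \emph{compatible} initial segments $(f_0,\dots,f_n)$. But the paper's definition takes $f_n\in\mathcal N_n$ and $f_{n-1}\in\mathcal N_{n-1}$ \emph{independently}: there is no requirement that $f_{n-1}$ be the predecessor of $f_n$ in any partial harmonic sequence. Hence $\mathcal G_n=\mathcal N_n+\overrightarrow P_{n-1}\mathcal N_{n-1}$ as subspaces, and $\mathcal N_{n+1}=(\overleftarrow P_n)^{-1}(\mathcal G_n)$ is in general strictly larger than the set of $(n{+}1)$-st entries of genuine partial harmonic sequences. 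So your claim that ``the inductive bookkeeping closes'' is not justified: knowing $\overleftarrow P_n f_{n+1}\in\mathcal G_n$ tells you there exist \emph{some} $f_n'\in\mathcal N_n$, $f_{n-1}'\in\mathcal N_{n-1}$ producing that residual, not that these can be taken consistent with your earlier choices of $f_{n-1},f_{n-2}$.

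\textbf{K\"onig's lemma is not available.} K\"onig's lemma requires finitely many choices at each node. The admissible $f_{n+1}$ at each stage form an affine subspace of $\R^{|V_{n+1}|}$, which is finite-dimensional but certainly not finite; finiteness of $|V_n|$ does not help. If you want a selection argument you need a genuine linear-algebra or inverse-limit argument (e.g.\ a Mittag-Leffler-type stabilization of images, or a direct forward construction), not K\"onig.

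A smaller gap in the forward direction: a fixed non-constant harmonic $(f_n)$ can have residual $f_n-\overrightarrow P_{n-1}f_{n-1}=0$ at some levels (take $f$ vanishing on $V_0,\dots,V_m$ with $0\neq f_{m+1}\in\ker\overleftarrow P_m$), so it does not by itself witness $\mbox{Col}(\overleftarrow P_n)\cap\mathcal G_n\neq\{0\}$ at \emph{every} $n$. Finally, for the ``in particular'' clause, the phrase ``holds automatically for $n$ large enough under mild irreducibility'' is not an argument; one actually shows $\mathcal N_1=\ker C_0\neq\{0\}$ (since $|V_1|\geq 2$) and then, using surjectivity of each $\overleftarrow P_n$, propagates $\mathcal N_n\neq\{0\}\Rightarrow\mathcal G_n\neq\{0\}\Rightarrow\mathcal N_{n+1}\neq\{0\}$ by induction.
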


\begin{remark}
(1) We can refine the definition of a stationary Bratteli diagrams given 
above in this section: a weighted Bratteli diagram $(B, c)$ is called 
{\em stationary} if $A_n = A$ and $C_n = C$ for all $n \geq 1$.
It can be seen that there are stationary weighted Bratteli diagrams such that
 the space $Harm$ is finitely dimensional.

(2) On the other hand, if  a Bratteli diagram has the property that 
$|V_n| < |V_{n+1}|$ for all $n$, 
 then the space $\mathcal Harm$ is infinite-dimensional.
\end{remark}

Now we consider some properties of harmonic functions defined on a Bratteli 
diagram $(B, c)$.
Given a function $f : V \to \R$, define the {\em current}  $I(x)$ through 
$x\in V$ as
$$
I(x) := \sum_{y\sim x} c_{xy}(f(x) - f (y)).
$$

The following statement represents a form of the Kirchhoff law and can serve 
as a  characterization of harmonic functions defined on vertices of a Bratteli 
diagram.

\begin{lemma}\label{current for harm fns}
 A function $f : V \to \R$ is  harmonic on a weighted Bratteli diagram $(B,c)$ 
 if and only if for every $x \in V_n, n \geq 1$,
$$
I_{in}(x) := \sum_{y\in V_{n-1}} c_{xy}(f(x) - f (y)) = \sum_{z\in V_{n
+1}} c_{xz}(f(z) - f(x)) =: I_{out}(x).
$$
Hence,  the incoming current is equal to outgoing current for every vertex if 
and only if the function $f$ is harmonic.

\end{lemma}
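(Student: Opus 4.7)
The plan is to unfold the definition of the graph-Laplacian at a vertex $x \in V_n$ with $n \geq 1$ and use the defining feature of a Bratteli diagram, namely that edges only connect vertices from neighboring levels. Concretely, for $x \in V_n$ the neighbor set $\{y \in V : y \sim x\}$ is contained in $V_{n-1} \cup V_{n+1}$, so the sum defining $(\Delta f)(x)$ in \eqref{eq_Laplacian formula} splits cleanly into two blocks:
\begin{equation*}
(\Delta f)(x) = \sum_{y \in V_{n-1},\, y \sim x} c_{xy}(f(x) - f(y)) + \sum_{z \in V_{n+1},\, z \sim x} c_{xz}(f(x) - f(z)).
\end{equation*}

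The first block is exactly $I_{in}(x)$. In the second block, pulling a minus sign through the difference gives
\begin{equation*}
\sum_{z \in V_{n+1},\, z \sim x} c_{xz}(f(x) - f(z)) = -\sum_{z \in V_{n+1},\, z \sim x} c_{xz}(f(z) - f(x)) = -I_{out}(x).
\end{equation*}
Therefore $(\Delta f)(x) = I_{in}(x) - I_{out}(x)$ for every $x \in V_n$, $n \geq 1$. By Definition~\ref{def Laplace Markov}, $f$ is harmonic on $(B,c)$ iff $(\Delta f)(x) = 0$ for all such $x$ (and also at the root if it is included in the harmonicity condition), which by the identity above is equivalent to $I_{in}(x) = I_{out}(x)$ at every $x \in V_n$ with $n \geq 1$. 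This gives the ``if and only if'' claimed in the lemma.

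There is no real obstacle here: the only subtle point is to make sure the sign bookkeeping is consistent with the definitions of $I_{in}$ and $I_{out}$, where $I_{in}$ uses the difference $f(x) - f(y)$ with $y$ at the previous level while $I_{out}$ uses $f(z) - f(x)$ with $z$ at the next level, so the two terms appear with opposite signs in $\Delta f(x)$. The graded (leveled) structure of the Bratteli diagram does all the work by guaranteeing the clean splitting of the neighbor set, which is why the statement takes the form of Kirchhoff's current law with a well-defined ``incoming'' and ``outgoing'' direction determined by the levels.
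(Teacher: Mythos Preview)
Your argument is correct and is precisely the natural one: use the graded structure of the Bratteli diagram to split the neighbor set of $x\in V_n$ into $V_{n-1}$ and $V_{n+1}$, and observe that $(\Delta f)(x)=I_{in}(x)-I_{out}(x)$. The paper does not supply a proof of this lemma; it is stated as an immediate consequence of the definitions, and your write-up fills in exactly those details (including the sign bookkeeping and the remark about the root).
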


Based on this result,  we can define, for $x\in V_n$,
$$
I_n(x) := I_{in}(x), \ \ \ \mbox{and} \ \ \ I_n = \sum_{x\in V_n} I_n(x).
$$

\begin{lemma}\label{lem I_n = const}
Let $f$ be a harmonic function on a weighted Bratteli diagram $(B,c)$. Then,
for any $n\geq 1$, we have $I_n = I_1 = = \sum_{x\in V_{1}} c_{ox}
(f(x) - f (o))$ and
\be\label{sum I_n^2}
\sum_{x\in V_n} (I_n(x))^2 \geq \frac{I_1^2}{|V_n|}.
\ee
\end{lemma}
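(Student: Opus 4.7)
The plan is to establish the equality $I_n = I_1$ by a current conservation argument on successive ``cuts'' of the diagram, and then obtain the inequality by a one-line Cauchy--Schwarz.

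First I would unpack the notation. Because $B$ is a Bratteli diagram, any vertex $x\in V_n$ is only adjacent to vertices in $V_{n-1}\cup V_{n+1}$, so
\[
I_n(x) = I_{in}(x) = \sum_{y\in V_{n-1},\, y\sim x} c_{xy}(f(x)-f(y)),
\qquad
I_{out}(x) = \sum_{z\in V_{n+1},\, z\sim x} c_{xz}(f(z)-f(x)).
\]
By Lemma \ref{current for harm fns}, harmonicity of $f$ at each $x\in V_n$ (with $n\ge 1$) yields $I_{in}(x)=I_{out}(x)$. Summing this identity over $x\in V_n$ gives $I_n=\sum_{x\in V_n} I_{out}(x)$.

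The main step is a Fubini-style reindexing. The sum $\sum_{x\in V_n} I_{out}(x)$ is a double sum over pairs $(x,z)$ with $x\in V_n$, $z\in V_{n+1}$, $x\sim z$, of $c_{xz}(f(z)-f(x))$. Swapping the order of summation, and using $c_{xz}=c_{zx}$, the expression becomes $\sum_{z\in V_{n+1}}\sum_{x\in V_n,\, x\sim z} c_{zx}(f(z)-f(x))=\sum_{z\in V_{n+1}} I_{in}(z)=I_{n+1}$. Hence $I_n=I_{n+1}$ for every $n\ge 1$, and an induction downward to the initial cut yields $I_n=I_1=\sum_{x\in V_1} c_{ox}(f(x)-f(o))$, matching the stated formula.

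For the inequality, applying Cauchy--Schwarz in $\R^{|V_n|}$ to the pair of vectors $(1,\ldots,1)$ and $(I_n(x))_{x\in V_n}$ gives
\[
I_1^2 = I_n^2 = \Bigl(\sum_{x\in V_n} I_n(x)\Bigr)^{2} \le |V_n|\sum_{x\in V_n}(I_n(x))^{2},
\]
which, after dividing by $|V_n|$, is exactly \eqref{sum I_n^2}.

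The only subtlety, rather than a genuine obstacle, is to be careful that the harmonicity hypothesis is used only at vertices in $V_n$ for $n\ge 1$; the root $o$ is treated as a boundary vertex (consistent with the indexing $n\ge 1$ in Proposition \ref{thm ness-suff matrix cond for harm fns}), so $I_1$ is free to be nonzero. Local finiteness of $B$ makes all the sums finite and the reordering of summation trivially justified.
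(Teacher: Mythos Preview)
Your proof is correct. The paper itself does not include a proof of this lemma (the results of Section~\ref{sect Bratteli} are attributed to \cite{BJ2019}), so there is nothing to compare against directly; your argument---Kirchhoff's identity from Lemma~\ref{current for harm fns}, summed over the level and reindexed to pass from $I_n$ to $I_{n+1}$, followed by Cauchy--Schwarz for the inequality---is exactly the standard one and is what one would expect the cited reference to contain.
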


We formulate the following statement for harmonic functions only although  
it can be given in more general terms of subharmonic functions (that is 
$(\Delta f)(x) \le 0$ for every $x$) and superharmonic functions (that is 
$(\Delta f)(x) \ge 0$ for every $x$) functions which are not discussed here.

\begin{proposition}\label{prop max/min principl}
Let $(B, c)$ be a weighted Bratteli diagram and $G_n = \{o\} \cup V_1 \cup \cdots \cup V_n$. Then for any nontrivial harmonic function $f : V \to \R$
$$
\max \{f(x) : x \in G_n\} = \max \{f(x) : x \in \partial G_n = V_{n}\} =: M_{n}(f).
$$
$$
\min \{f(x) : x \in G_n\} = \min \{f(x) : x \in \partial G_n = V_{n}\} =: m_{n}(f).
$$
Moreover, for any $x, y \in G_n$,
\be\label{inequality for harm max - min}
f(x) - f(y) \leq M_{n}(f) - m_{n}(f),\ \ n\in \N.
\ee
The sequence $\{M_n(f)\}$ is strictly increasing, and the sequence 
$\{m_n(f)\}$ is strictly decreasing.
\end{proposition}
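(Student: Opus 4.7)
The plan is to invoke the strong maximum principle for harmonic functions on the finite subgraph $G_n$, exploiting the fact that in a Bratteli diagram a vertex in $\{o\}\cup V_1\cup\cdots\cup V_{n-1}$ has all its neighbors inside $G_n$, whereas each vertex in $V_n$ has at least one neighbor in $V_{n+1}\setminus G_n$. Thus $V_n$ is precisely the boundary of $G_n$ and the remaining vertices form its interior.

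First I would prove $\max_{G_n} f = M_n(f)$. By Remark \ref{rem_basic facts on Lplc and Mrkv}(3), harmonicity at an interior vertex $x$ rewrites as $f(x)=\sum_{y\sim x}p(x,y)f(y)$, a convex combination with strictly positive weights summing to one. If $f$ attains its maximum on $G_n$ at an interior vertex $x_0$, then every neighbor $y$ of $x_0$ must satisfy $f(y)=f(x_0)$. Iterating this step through the connected interior of $G_n$ (from any interior vertex one can always ascend to the next level via the Bratteli structure, since every vertex has at least one outgoing edge to $V_{k+1}$), the maximal value propagates until some vertex of $V_n$ is reached. The argument for $\min_{G_n} f = m_n(f)$ is the same applied to $-f$, and the inequality $f(x)-f(y)\leq M_n(f)-m_n(f)$ is an immediate consequence of the two one-sided bounds.

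For the strict monotonicity I would argue by contradiction. Assume $M_n(f)=M_{n+1}(f)=:M$ and pick $x_0\in V_n$ with $f(x_0)=M$. Now $x_0$ lies in the interior of $G_{n+1}$ and $f(x_0)$ is the maximum of $f$ on $G_{n+1}$, so the propagation step above forces $f\equiv M$ at every vertex reachable from $x_0$ via interior-to-interior edges of $G_{n+1}$. Since the interior of $G_{n+1}$ is the connected graph $G_n$, this yields $f\equiv M$ on $G_n$, and one further round of propagation from each vertex of $V_n$ (now an interior vertex of $G_{n+1}$) out to its children in $V_{n+1}$ delivers $f\equiv M$ on $G_{n+1}$. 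The idea is then to iterate: each $y\in V_{n+1}$ is interior in $G_{n+2}$ with $f(y)=M$, and the same reasoning, carried out one level higher, forces $f\equiv M$ on $G_{n+2}$, and so on through every level, giving $f\equiv M$ on $V$ in contradiction with nontriviality. The case of $m_n$ follows by replacing $f$ with $-f$.

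The main obstacle is precisely this inductive extension beyond $G_{n+1}$. The mean-value identity at $y\in V_{n+1}$ only ensures that the weighted average of $f$ over the $V_{n+2}$-neighbors of $y$ equals $M$; it does not individually pin each such value to $M$. To close the induction one needs the global form of the strong maximum principle, namely that a harmonic function bounded above and attaining its supremum on a connected graph must be constant. This is where the nontriviality hypothesis (read as nonconstancy of $f$ on $V$) enters decisively, and where the careful bookkeeping between interior and boundary at successive levels $G_k$ has to be executed; any gap in this step would necessitate an extra connectivity or irreducibility hypothesis on $(B,c)$.
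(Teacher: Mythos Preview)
The paper states this proposition without proof (it is attributed to \cite{BJ2019}), so there is no argument to compare against. Your treatment of the first two assertions is correct and is exactly the standard strong-maximum-principle argument using that the interior of $G_n$ is $\{o\}\cup V_1\cup\cdots\cup V_{n-1}$ and that every interior vertex has an outgoing edge to the next level.

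The obstacle you flag in the strict-monotonicity step is genuine, and the fix you suggest does not close it. Once you have $f\equiv M$ on $G_{n+1}$, the harmonic identity at each $x\in V_{k}$ (for $k\ge n+1$) only yields $\sum_{y\in V_{k+1},\,y\sim x}c_{xy}\bigl(f(y)-M\bigr)=0$; this is one linear relation per vertex of $V_k$, and when $|V_{k+1}|>|V_k|$ it cannot force $f\equiv M$ on $V_{k+1}$. The global strong maximum principle you invoke requires $M$ to be the supremum of $f$ on all of $V$, which is not assumed and need not hold. Concretely, take a diagram with $|V_1|=|V_2|=1$ and $|V_3|=2$ (equal conductances): any harmonic $f$ with $f(o)=0$ is forced to vanish on $V_1$ and $V_2$ but can take values $\pm1$ on $V_3$, so $M_1(f)=M_2(f)=0<M_3(f)$ and strict increase fails at the first step even though $f$ is nonconstant. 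Thus the strict-monotonicity claim, as written, needs either an additional hypothesis excluding such bottlenecks (for instance an irreducibility or rank condition on the matrices $\overleftarrow{P}_n$ as in Proposition~\ref{prop suff cond for exist harm fns}), or it should be read as ``eventually strictly increasing.'' Your closing caveat about a possible extra connectivity assumption is therefore exactly on target; what is missing is the recognition that without it the statement is simply false, not merely hard to prove.
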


It can be noticed that in conditions of Proposition \ref{prop max/min 
principl} one can always assert that the sequence $\{M_n(f)\}$ is formed by 
positive numbers and the sequence   $\{m_n(f)\}$ has only negative terms  
provided $f(o) = 0$.

\begin{corollary}
Let $(B(V,E), c)$ be a weighted Bratteli diagram. A harmonic function $f :V 
\to \R$ belongs to $\ell^\infty(V)$ if and only if the sequences 
$\{M_n(f)\}$,  $\{m_n(f)\}$ have finite limits.
\end{corollary}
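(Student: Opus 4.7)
\medskip

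The plan is to reduce everything to the monotone convergence principle for real sequences, using Proposition \ref{prop max/min principl} as the only real input. The key observation is that because $V_n \subset G_n$ and $G_n \subset G_{n+1}$ with $V = \bigcup_n G_n$ (every vertex of the Bratteli diagram lies at some level), we have the identifications
\begin{equation*}
\sup_{x \in V} f(x) \;=\; \sup_{n \in \N} M_n(f), \qquad \inf_{x \in V} f(x) \;=\; \inf_{n \in \N} m_n(f).
\end{equation*}
Thus $f \in \ell^\infty(V)$ if and only if both $\sup_n M_n(f) < \infty$ and $\inf_n m_n(f) > -\infty$.

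For the forward direction, I would argue: assume $f \in \ell^\infty(V)$, so $\|f\|_\infty =: C < \infty$. Then $M_n(f) \leq C$ for every $n$, and by Proposition \ref{prop max/min principl} the sequence $\{M_n(f)\}$ is strictly increasing. A bounded monotone sequence converges in $\R$, so $\lim_n M_n(f)$ exists and is finite. The same argument applied to $\{m_n(f)\}$, which is strictly decreasing and bounded below by $-C$, produces a finite limit as well.

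For the reverse direction, suppose $\lim_n M_n(f) = M < \infty$ and $\lim_n m_n(f) = m > -\infty$. Since $\{M_n(f)\}$ is strictly increasing with finite limit $M$, one has $M_n(f) \leq M$ for all $n$; similarly $m_n(f) \geq m$ for all $n$. Given any $x \in V$, choose $n$ with $x \in G_n$; then
\begin{equation*}
m \;\leq\; m_n(f) \;\leq\; f(x) \;\leq\; M_n(f) \;\leq\; M,
\end{equation*}
so $\|f\|_\infty \leq \max(|m|,|M|)$ and $f \in \ell^\infty(V)$.

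There is really no main obstacle here: the entire content sits in Proposition \ref{prop max/min principl}, together with the exhaustion $V = \bigcup_n G_n$ built into the definition of a Bratteli diagram. The only point requiring mild care is to observe that the corollary's hypothesis (that the limits exist and are \emph{finite}) is equivalent, via monotonicity, to boundedness of the two sequences, which is exactly what is needed to bound $f$ pointwise.
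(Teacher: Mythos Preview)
Your argument is correct and is precisely the intended one: the paper states this corollary without proof, treating it as an immediate consequence of the monotonicity in Proposition \ref{prop max/min principl} together with the exhaustion $V=\bigcup_n G_n$. The only cosmetic point is that the strict monotonicity in Proposition \ref{prop max/min principl} is asserted for \emph{nontrivial} harmonic functions, so you might note that the constant case is trivial; your use of monotonicity (strict or not) is all that is actually needed.
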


\subsection{Explicit integral representation of harmonic functions}
In this subsection we will assume that the networks considered on Bratteli
diagrams $(B,c)$ are \textit{transient}; see Definition 
\ref{def recurrence and 
transience}. We use the paper \cite{Ancona_Lyons_Peres1999} and find an 
integral representation of harmonic functions in terms of a Poisson kernel, 
and investigate the convergence of harmonic functions on the path space of 
a Bratteli diagram.

The transition probabilities matrix $P = (p_{xy} : x,y \in V)$ defines a 
random walk on the set of all vertices $V$. Let $\Omega \subset 
V^{\infty}$ be the set of all paths $\omega = (x_0, x_1, ..., x_n,...)$ where 
$(x_{i-1}x_i) \in E$. By $\Omega_x$ we denote the subset of $\Omega$ 
formed by those paths that starts with $x$. Then  $\mathbb P_x$ denotes 
the Markov measure on $\Omega_x$ generated by $P$ (see Subsection 
\ref{ss path space} for details).

Let $X_i : \Omega_x \to V$ be the random variable on $(\Omega_x, 
\mathbb P_x)$ such that $X_i(\omega) = x_i$. For a given vertex $x\in V$ 
and some level $V_n \subset V$ such that $x \notin V_n$, we determine the 
function of stopping time (more information on this notion can be found, for 
instance, in \cite{Du2012, Sokol2013}):
$$
\tau(V_n) (\omega) = \min\{i \in \N : X_i(\omega) \in V_n\}, \ \ \omega\in 
\Omega_x.
$$
For $x \in V_n$, we set $\tau(V_n)(\omega) = 0$. The value $\tau(V_n) 
(\omega)$ shows when the orbit $\omega$ reaches $V_n$ at the first time.

\begin{lemma}\label{lem_stopping_time}
Let $(B,c)$ be a transient network, and $W_{n -1} = \bigcup_{i=0}^{n-1} 
V_i$. Then for every $n \in \N$ and any $x\in W_{n-1}$, there exists $m > 
n$ such that for $\mathbb P_x$-a.e. $\omega\in \Omega_x$
\be
\tau(V_{i+1})(\omega) = \tau(V_{i})(\omega) +1, \ \ i \geq m.
\ee
\end{lemma}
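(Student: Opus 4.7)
The plan is to leverage transience of the network $(B,c)$: since each level $V_j$ is finite and the walk is transient, every finite set $W_J = V_0 \cup \cdots \cup V_J$ is visited only finitely many times by the process $(X_k)$, $\mathbb P_x$-almost surely. First I would introduce the last-exit times
$$R_j(\omega) := \sup\{k \ge 0 : X_k(\omega) \in W_j\},$$
each finite $\mathbb P_x$-a.s. A crucial structural feature of a Bratteli diagram is that edges connect only consecutive levels, so $|\ell(X_{k+1}) - \ell(X_k)| = 1$ for every $k$, where $\ell(\cdot)$ denotes the level map. Consequently the walk cannot re-enter $W_j$ without first passing through $V_j$, which forces $\ell(X_{R_j}) = j$ and $\ell(X_{R_j+1}) = j+1$, with $\ell(X_k) \ge j+1$ for every $k > R_j$. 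This already yields $\ell(X_k) \to \infty$ $\mathbb P_x$-a.s., the drift-to-infinity we need as a starting point.

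Next I would relate the first-hitting times $\tau(V_i)$ to the exit times $R_{i-1}$. In general $\tau(V_i) \le R_{i-1}+1$, with equality precisely when the walk, upon first arriving at $V_i$, never subsequently returns to $W_{i-1}$. Introduce the bad indices
$$B(\omega) := \{i > n : \tau(V_i)(\omega) < R_{i-1}(\omega)+1\},$$
the levels at which at least one backtrack into $W_{i-1}$ occurs after $\tau(V_i)$. On the event $\{|B(\omega)| < \infty\}$, I set $m(\omega) := \sup B(\omega)+1$; then $\tau(V_i) = R_{i-1}+1$ for every $i \ge m$, and the Bratteli $\pm 1$-rule combined with non-return to $W_{i-1}$ forces $R_i = R_{i-1}+1$ (the walk lands in $V_i$ at time $R_{i-1}+1$ and must step to $V_{i+1}$ at the next instant, never returning to lower levels). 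The desired identity $\tau(V_{i+1}) = R_i+1 = R_{i-1}+2 = \tau(V_i)+1$ follows.

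The main obstacle is therefore to verify that $B(\omega)$ is a.s.\ finite. I would tackle this through a Borel--Cantelli argument built on the Green kernel $\mc G(x,y) = \sum_n p^{(n)}(x,y)$ from Lemma \ref{lem relations on G F U}: by the strong Markov property at $\tau(V_i)$, the expected number of visits to $V_{i-1}$ occurring strictly after $\tau(V_i)$ is bounded by $\sum_{y \in V_{i-1}} \mc G(v,y)$ for the hitting vertex $v \in V_i$. Summability of $\sum_i \mathbb P_x(i \in B)$ should then follow from transience together with the Bratteli layering, after which Borel--Cantelli delivers $|B(\omega)| < \infty$ $\mathbb P_x$-a.s. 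Establishing this summability is the delicate point, since individual backtrack probabilities need not themselves form a summable series; it requires carefully exploiting the reversibility relations $c(x)F(x,y) = c(y)F(y,x)$ of Remark \ref{rem reversibility F} and the finiteness of each level to convert pointwise decay of the Green kernel between far-apart levels into a summable bound.
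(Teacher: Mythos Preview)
The paper does not include a proof of this lemma; it is attributed to \cite{BJ2019}, so there is no in-paper argument against which to compare your attempt directly.

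Your reduction is correct and cleanly argued: the equivalence between ``$B(\omega)$ finite'' and ``$\tau(V_{i+1}) = \tau(V_i)+1$ for all large $i$'' follows exactly as you describe from the $\pm 1$ level-step structure of the Bratteli diagram and the interplay between first-hitting and last-exit times. Your identification of the Borel--Cantelli step as the crux is also right.

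That step, however, cannot be completed, and not for lack of cleverness: the assertion fails in the stated generality. Take the simplest admissible Bratteli diagram, $|V_n|=1$ for every $n$ (so the underlying graph is the half-line $\N_0$, an instance the paper itself treats in Section~\ref{sect Laplace}), with conductances $c_{n,n+1}=2^n$. This yields a transient birth--death chain with backward probability $q_i=1/3$ at every vertex $i\ge 1$. Applying the strong Markov property at each $\tau(V_i)$, the events $A_i:=\{X_{\tau(V_i)+1}\in V_{i-1}\}$ are mutually independent with common probability $1/3$; the \emph{second} Borel--Cantelli lemma then forces infinitely many $A_i$ to occur $\mathbb P_x$-a.s. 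Each occurrence gives $\tau(V_{i+1})>\tau(V_i)+1$, so $B(\omega)$ is a.s.\ infinite and $\sum_i\mathbb P_x(i\in B)=\infty$. Neither the reversibility identities of Remark~\ref{rem reversibility F} nor the finiteness of the levels can manufacture summability here, since both hypotheses hold in this example.

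So the obstruction you flagged is genuine and fatal: no argument along these lines can succeed without an additional hypothesis on $(B,c)$, for instance that the one-step backward probabilities from the first-hitting vertices of each level form a summable sequence. What your first two paragraphs \emph{do} establish, unconditionally from transience, is the weaker fact $\ell(X_k)\to\infty$ $\mathbb P_x$-a.s.; you may wish to check whether that already suffices for the downstream application in Theorem~\ref{thm convergence of h_n}.
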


Now we fix a vector $f_n \in \R^{|V_n|}$ and define the function $h_n : X 
\to \R$ by setting
\be\label{eq for h_n}
h_n(x) := \mathbb E_x( f_n \circ X_{\tau(V_n)}) = \int_{\Omega_x} 
f_n(X_{\tau(V_n)}(\omega)) d\mathbb P_x(\omega), \ \ n\in \N.
\ee

\begin{lemma}\label{lem_harm_fn_h_n} For a given function $f = (f_n)$, 
and, for  every $n$, the function $h_n(x)$ is harmonic on $V \setminus 
V_n$ and $h_n(x) = f_n(x), x \in V_n$. Furthermore, $h_n(x)$ is uniquely 
defined on $W_{n-1}$.
\end{lemma}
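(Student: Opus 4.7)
The lemma bundles three assertions: (a) $h_n(x)=f_n(x)$ for $x\in V_n$; (b) $h_n$ is harmonic on $V\setminus V_n$; and (c) for every $x\in W_{n-1}$ the integral \eqref{eq for h_n} defining $h_n(x)$ is well defined and finite. My plan is to treat them in that order, using the first-step decomposition \eqref{eq measures mathbb P_x and mathbb P_y} together with the hitting-time control of Lemma \ref{lem_stopping_time}.

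Claim (a) is immediate: the definition of the stopping time forces $\tau(V_n)(\omega)=0$ identically on $\Omega_x$ when $x\in V_n$, so $X_{\tau(V_n)}(\omega)=X_0(\omega)=x$ and \eqref{eq for h_n} collapses to $f_n(x)$. Claim (b) is a standard first-step analysis. Fix $x\in V\setminus V_n$; since $X_0\equiv x\notin V_n$ on $\Omega_x$, we have $\tau(V_n)\geq 1$, and the pathwise identities
\[
\tau(V_n)(\omega) = 1 + \tau(V_n)(\sigma\omega),\qquad X_{\tau(V_n)(\omega)}(\omega) = X_{\tau(V_n)(\sigma\omega)}(\sigma\omega)
\]
hold on $\Omega_x$. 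Feeding the disintegration \eqref{eq measures mathbb P_x and mathbb P_y} into \eqref{eq for h_n} and using the shift identity yields
\[
h_n(x) = \sum_{y\sim x} p(x,y)\int_{\Omega_y} f_n\circ X_{\tau(V_n)}\, d\mathbb P_y = \sum_{y\sim x} p(x,y)\, h_n(y) = (Ph_n)(x),
\]
so $\Delta h_n(x)=0$ by Remark \ref{rem_basic facts on Lplc and Mrkv}(3). Equivalently, one may invoke Theorem \ref{thm HF from ALP}(i) applied to $\wt F := f_n\circ X_{\tau(V_n)}$, noting that $\wt F=\wt F\circ \sigma$ holds on $\Omega_x$ for every $x\notin V_n$.

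For claim (c), the only issue is to show that, for $x\in W_{n-1}$, the random variable $f_n\circ X_{\tau(V_n)}$ is $\mathbb P_x$-integrable; this reduces to establishing $\tau(V_n)<\infty$ $\mathbb P_x$-a.s., since $f_n$ is a prescribed (in particular bounded on any set of interest) boundary vector. This is exactly the content of Lemma \ref{lem_stopping_time}: for $\mathbb P_x$-a.e.\ $\omega$ the trajectory eventually advances by exactly one level per step past some random level $m>n$, so by the Bratteli structure (edges only between consecutive levels) the path must cross $V_n$ at some finite time. Hence the hitting distribution of $V_n$ starting from $x$ is concentrated on the at most countably many vertices of $V_n$ reachable from $x$, and the integral in \eqref{eq for h_n} is finite and unambiguously defined on $W_{n-1}$.

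The delicate step I anticipate is the bookkeeping in (b): the shift identity for $\tau(V_n)$ relies essentially on the hypothesis $X_0\notin V_n$, so the $\sigma$-invariance of $\wt F$ fails on $\Omega_x$ with $x\in V_n$, which is precisely what allows $h_n$ to be pinned to the prescribed data $f_n$ on $V_n$ rather than harmonic there. Once this case split is handled and the measurability of $\tau(V_n)$ is confirmed (standard, as it is a stopping time for the natural filtration of $(X_k)$), the harmonic equation drops out of first-step analysis and the integrability in (c) is supplied directly by Lemma \ref{lem_stopping_time}.
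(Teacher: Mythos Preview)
Your arguments for parts (a) and (b) are correct and essentially identical to the paper's: the paper also observes that $\tau(V_n)=0$ on $\Omega_x$ for $x\in V_n$, and then runs the first-step/Markov-property computation
\[
h_n(x)=\sum_{y\sim x}p(x,y)\,\mathbb E_y(f_n\circ X_{\tau(V_n)})=\sum_{y\sim x}p(x,y)\,h_n(y)=(Ph_n)(x)
\]
for $x\notin V_n$. Your shift-operator phrasing via \eqref{eq measures mathbb P_x and mathbb P_y} is just a repackaging of the same conditioning.

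Where you diverge from the paper is in the reading of the clause ``$h_n(x)$ is uniquely defined on $W_{n-1}$.'' You interpret this as an integrability/well-definedness assertion and invoke Lemma~\ref{lem_stopping_time} to get $\tau(V_n)<\infty$ $\mathbb P_x$-a.s. The paper, however, means \emph{uniqueness}: since in this section the levels $V_i$ are finite, the set $W_{n-1}=\bigcup_{i=0}^{n-1}V_i$ is finite with outer boundary $\partial W_{n-1}=V_n$, and the paper simply cites the uniqueness of the solution to the finite Dirichlet problem
\[
(\Delta u)(x)=0\ \ (x\in W_{n-1}),\qquad u|_{V_n}=f_n,
\]
i.e., the lemma following \eqref{eq Dirichlet probl}. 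This is the content you are missing: the point is not that the integral makes sense, but that \emph{any} function harmonic on $W_{n-1}$ with boundary values $f_n$ on $V_n$ must coincide with $h_n$ there. Your Lemma~\ref{lem_stopping_time} argument is not wrong as a side remark about well-posedness, but it does not establish the intended uniqueness statement; replace it (or supplement it) with the finite Dirichlet uniqueness.
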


\begin{proof} (Sketch)
We see that $h_n(x) = f_n(x)$ when $x \in V_n$ because in the relation 
$h_n(x) = \mathbb E_x( f_n \circ X_{\tau(V_n)}(\omega)) $ the right side 
does not depend on $\omega$ and $\tau(V_n) =0$.
Then we compute
\begin{eqnarray*}
h_n(x)  &=& \sum_{y\sim x} p(x, y) \mathbb E_x(f_n \circ X_{\tau(V_n)} 
\ | \ X_1 = y)\\
   &=&  \sum_{y\sim x} p(x, y) \mathbb E_y(f_n \circ X_{\tau(V_n)} )\ \ 
   \mbox{(using\ the\ Markov\ property}) \\
   &=&  \sum_{y\sim x} p(x, y) h_n(y)\\
   & = & (Ph_n)(x)
\end{eqnarray*}

The fact that $h_n(x)$ is uniquely determined on $W_{n-1}$ follows from 
the uniqueness of the solution of the Dirichlet problem
$$
(\Delta u)(x) = 0, \ x \in W_{n-1}, \ \ \mbox{and} \ \ u(x) = f_n(x),\ x \in 
V_n
$$
where  $V_n = \partial W_{n-1}$.
\end{proof}

\begin{theorem}\label{thm convergence of h_n}
Let $f = (f_n) \geq 0$ be a function on $V$ such that $\overleftarrow{P}_n 
f_{n+1} = f_n$. Then the sequence $(h_n(x))$ defined in 
(\ref{eq for h_n}) converges pointwise to a harmonic function $H(x)$. 
Moreover, for every $x\in V$, there exists $n(x)$ such that $h_i(x) = H(x), i 
\geq n(x)$. Equivalently, the sequence $(f_n\circ X_{\tau(V_n)})$ 
converges in $L^1(\Omega_x, \mathbb P_x)$.
\end{theorem}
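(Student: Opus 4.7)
The plan is to realize $Y_n := f_n \circ X_{\tau(V_n)}$ on the path space $(\Omega_x, \mathbb P_x)$ as a non-negative martingale with respect to the filtration $(\mathcal F_{\tau(V_n)})$ of stopped positions, from some level onward. The two ingredients are Lemma \ref{lem_stopping_time}, which forces the stopped chain to move forward by a single edge for all large $i$, and the hypothesis $\overleftarrow{P}_n f_{n+1} = f_n$, which collapses the resulting one-step conditional expectation onto $f_n$. Eventual constancy of $h_n(x)$ and $L^1$-convergence of $Y_n$ will then follow from standard martingale arguments.

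Fix $x \in V$ and choose $n_0$ with $x \in W_{n_0-1}$. Lemma \ref{lem_stopping_time} supplies $m = m(x) > n_0$ such that, for $\mathbb P_x$-a.e.\ $\omega$ and every $i \geq m$, $\tau(V_{i+1})(\omega) = \tau(V_i)(\omega) + 1$; equivalently, $X_{\tau(V_{i+1})} = X_{\tau(V_i)+1}$ lies in $V_{i+1}$ almost surely. Applying the strong Markov property at $\tau(V_i)$ and using this a.s.\ forward step to restrict the neighbor sum to $V_{i+1}$, one obtains
\[
\mathbb E_x\!\left[Y_{i+1} \mid \mathcal F_{\tau(V_i)}\right]
= \sum_{z \in V_{i+1},\, z \sim X_{\tau(V_i)}} p\!\left(X_{\tau(V_i)}, z\right) f_{i+1}(z)
= f_i\!\left(X_{\tau(V_i)}\right) = Y_i,
\]
where the penultimate equality is precisely the hypothesis $\overleftarrow{P}_i f_{i+1} = f_i$. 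Hence $(Y_n)_{n \geq m}$ is a non-negative martingale. Taking expectations yields $h_i(x) = h_m(x) =: H(x)$ for all $i \geq m$, which is the eventual constancy with $n(x) = m$. Doob's theorem applied to this non-negative martingale produces a $\mathbb P_x$-a.s.\ limit $Y_\infty$, and the constancy of $\mathbb E_x[Y_n]$ combined with uniform integrability upgrades this to convergence in $L^1(\Omega_x, \mathbb P_x)$ via Scheff\'e's lemma.

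Harmonicity of $H$ is then automatic: for any $y \in V$ at level $k$, Lemma \ref{lem_harm_fn_h_n} gives $\Delta h_n(y) = 0$ whenever $n > k$, and since $\Delta$ is a local finite-sum operator (local finiteness of the graph), $\Delta H(y) = \lim_n \Delta h_n(y) = 0$. The principal obstacle I anticipate lies in the martingale identity above: it hinges on interpreting Lemma \ref{lem_stopping_time} as providing a \emph{deterministic} threshold $m$ past which the forward-step event has $\mathbb P_x$-probability one, so that the backward-neighbor terms in the strong Markov decomposition can be dropped cleanly; if the lemma must instead be read with a random threshold $m(\omega)$, one would rescue the argument by truncating on $\{m(\omega) \leq i\}$ and exploiting the non-negativity of $f$ together with monotone convergence. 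A secondary subtlety is the verification of uniform integrability, which is not automatic for non-negative martingales with constant expectation and may require a tail control on $f$ implicit in the framework.
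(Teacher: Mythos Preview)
The paper does not include a proof of this theorem in the text you were given; it is stated with the results of the section attributed to \cite{BJ2019}. So there is no in-paper argument to compare against directly, and I evaluate your proposal on its own.

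Taking Lemma~\ref{lem_stopping_time} literally with a deterministic threshold $m$, your martingale computation and the passage to eventual constancy and harmonicity are correct. Your two self-identified caveats are, however, genuine obstacles rather than cosmetic worries. First, the deterministic-$m$ reading in fact forces $\sum_{z\in V_{i+1}} p(w,z)=1$ for every $w$ in the support of $X_{\tau(V_i)}$, which is incompatible with the Bratteli structure (every $w\in V_i$, $i\ge 1$, has at least one backward edge). So the lemma must be read with a random $m(\omega)$, and then your key identity breaks: the hypothesis $\overleftarrow P_i f_{i+1}=f_i$ is the \emph{sub-stochastic} relation $\sum_{z\in V_{i+1}} p(w,z)f_{i+1}(z)=f_i(w)$, not the hitting-kernel relation $\sum_{z} Q_i(w,z)f_{i+1}(z)=f_i(w)$ that would make $(Y_n)$ a martingale for the chain $(X_{\tau(V_n)})$. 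Your proposed rescue by truncating on $\{m(\omega)\le i\}$ does not go through as stated, because that event depends on the entire future of the path and is not $\mathcal F_{\tau(V_i)}$-measurable; you cannot pull it outside the conditional expectation. A workable route would instead compare $h_{n+1}$ and $h_n$ via the Dirichlet characterization in Lemma~\ref{lem_harm_fn_h_n}: both are harmonic on $W_{n-1}$, so one must show $h_{n+1}|_{V_n}=f_n$, which requires relating the hitting kernel $Q_n$ to $\overleftarrow P_n$ rather than to the one-step $p(\cdot,\cdot)$.

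Second, on $L^1$-convergence: even if the martingale structure were in place, constancy of $\mathbb E_x[Y_n]$ alone does not give uniform integrability for a non-negative martingale (one only has $\mathbb E_x[Y_\infty]\le \lim_n \mathbb E_x[Y_n]$ by Fatou), so Scheff\'e's lemma cannot be invoked without a separate argument. You flag this correctly, but it remains an actual gap in the proposal.
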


\begin{remark} (1) We notice that the condition 
$\overleftarrow{P}_n f_{n+1} = f_n$ need not to be true for all $n$. It 
suffices to have this property for all sufficiently large $n$; the function $f_i$ 
can be chosen arbitrary for a finite set of $i$'s.

(2) In \cite{Ancona_Lyons_Peres1999}, the following  statement was 
proved: If a reversible Markov chain $X_n$ is transient and $f$ is a 
(harmonic) function of finite energy, then $(f\circ X_n)$ converges almost 
everywhere. Our result above is of the same nature, but we do not require 
that the harmonic function has finite energy.

\end{remark}

\section{Harmonic functions on trees, Pascal graph, stationary Bratteli 
diagrams}\label{sect HF trees, etc}

The  goal of this section: we will show that the algorithm of 
finding harmonic functions and monopoles/dipoles described in
Section \ref{sect Bratteli}. We use the notations introduced in Section 
\ref{sect Bratteli}. The results of this section are obtained in 
\cite{BJ2019}.

\subsection{Harmonic functions on trees} 

\begin{proposition}\label{prop HF on tree}
Let $T$ be a tree with conductance function $c$. The space $Harm$ of 
harmonic functions on the electrical network $(T,c)$ is infinite-dimensional. 
Any harmonic function can be found according to Proposition 
\ref{prop suff cond for exist harm fns}.
\end{proposition}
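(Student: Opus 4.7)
The plan is to realize $T$ as a (generalized) Bratteli diagram and then specialize the algorithm of Proposition \ref{prop suff cond for exist harm fns}. First, I would fix a root $o \in V(T)$ and define the levels $V_n := \{x \in V(T) : \mathrm{dist}(o,x) = n\}$. Since $T$ is a tree, every $x \in V_n$ with $n \geq 1$ has a unique parent $\pi(x) \in V_{n-1}$, and all remaining neighbors of $x$ lie in $V_{n+1}$ as its children. By Proposition \ref{prop  graph is BD} this casts $(T,c)$ as a (generalized) Bratteli diagram, with the conductance function transferring unchanged to the edges of the diagram.

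Next, I would write down the matrices $\overrightarrow{P}_{n-1}$ and $\overleftarrow{P}_n$ explicitly. In the tree case $\overrightarrow{P}_{n-1}$ has exactly one nonzero entry per row, namely $\overrightarrow{p}^{(n-1)}_{x,\pi(x)} = c_{x,\pi(x)}/c(x)$. The harmonic recurrence \eqref{eq ness-suff for harmonic f-n} of Proposition \ref{thm ness-suff matrix cond for harm fns} then reads, at each vertex $x \in V_n$,
\[
f(x) - \frac{c_{x,\pi(x)}}{c(x)}\,f(\pi(x)) \;=\; \sum_{z \,\in\, \mathrm{children}(x)} \frac{c_{xz}}{c(x)}\,f(z),
\]
a single scalar constraint on the children-values $\{f(z) : z \in \mathrm{children}(x)\}$. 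Invoking Proposition \ref{prop suff cond for exist harm fns}, I would then build harmonic functions level-by-level: choose $f(o) \in \R$ arbitrarily, then $f|_{V_1}$ subject only to the equation $(\Delta f)(o) = 0$, and at each $x$ with $k := |\mathrm{children}(x)| \geq 1$ pick the $k$ children-values subject to the one scalar constraint above, leaving $k - 1$ free real parameters. Since at every stage the harmonic equation at each processed vertex is exactly enforced, this scheme produces every harmonic function on $T$, yielding the ``any harmonic function can be found'' part of the statement.

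To establish infinite-dimensionality of $\mc Harm$, I would exploit the accumulating free parameters. For each branching vertex $x$ (with $|\mathrm{children}(x)| \geq 2$) one can prescribe a nonzero ``split'' among the children-values while matching any value already assigned at $x$; distinct branching vertices then give rise to linearly independent harmonic functions because the level-by-level recurrence is triangular with respect to the filtration $\{o\} \subset \{o\} \cup V_1 \subset \{o\} \cup V_1 \cup V_2 \subset \cdots$, so a nontrivial degree of freedom introduced at level $n$ cannot be cancelled by choices made at later levels. Since an infinite, locally finite tree that is not a single ray must contain infinitely many branching vertices reachable from $o$ (otherwise $T$ would reduce to finitely many rays glued at a finite core), this supplies infinitely many independent harmonic functions, as required.

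The main obstacle I anticipate is precisely this last linear-independence step: verifying that the locally prescribed splits at distinct branching vertices extend to globally well-defined harmonic functions that remain independent after the free parameters at deeper levels are filled in. The triangularity of the recurrence is the mechanism for this non-cancellation, but making it precise requires selecting a coherent basis indexed by branching vertices and tracking the propagation of each basis element outward through the tree. The existence and propagation half of the argument (the algorithmic part) is routine once the Bratteli-diagram identification is in hand; the genuine content lies in ruling out hidden dependencies at infinity.
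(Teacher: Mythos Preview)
The paper gives no proof of this proposition; it simply states it and passes to the binary-tree example. Your plan---root the tree, view it as a Bratteli diagram via Proposition \ref{prop  graph is BD}, and run the level-by-level recursion \eqref{eq ness-suff for harmonic f-n}---is exactly the intended reading of ``any harmonic function can be found according to Proposition \ref{prop suff cond for exist harm fns}.''

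Your anxiety about linear independence at infinity is unwarranted; the tree structure makes this a purely finite-level matter. For a branching vertex $x\in V_n$ with children $z_1,\dots,z_k$ you can build a harmonic function $h_x$ that vanishes on $V_0\cup\cdots\cup V_n$ and on all of $V_{n+1}$ except at the children of $x$ (assign values there with $\sum_j c_{xz_j}h_x(z_j)=0$, not all zero, and set $h_x\equiv 0$ on the children of every other $y\in V_n$; then propagate outward). In a tree the children-sets of distinct $x,x'\in V_n$ are disjoint, so the $h_x$'s are independent already by looking at level $n+1$; triangularity across levels handles branching vertices at different depths. No ``hidden dependencies at infinity'' can arise.

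There is, however, a genuine gap in your claim that an infinite locally finite tree which is not a single ray must contain infinitely many branching vertices. Three copies of $\N_0$ glued at a common origin is a counterexample, and for that tree $\dim\mc Harm=3$. So the proposition as literally stated fails for such trees. This is a deficiency of the statement rather than of your method: the paper evidently has in mind trees with unbounded level sizes (compare the remark immediately after Proposition \ref{prop suff cond for exist harm fns}, which notes that $|V_n|<|V_{n+1}|$ for all $n$ forces $\mc Harm$ to be infinite-dimensional, and the binary-tree example that follows). You should flag this hypothesis explicitly rather than try to argue around it.
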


\begin{example}[Symmetric harmonic functions on the binary tree]
\label{ex hf on a tree}

Let $x_0$ be the root of the binary tree $T$, and let $V_n$ denote the set 
of vertices on the distance $n$ from the root. Next, we assume that the 
conductance function $c = c(e), e \in E$, has the property: $c(e) = 
\lambda^n$ for all $e \in E_n, n\geq 0$. Hence, the associated matrices 
$C_n$ are of the size $2^n \times 2^{n+1}$, and the $i$-th row of $C_n$ 
consists of all zeros but $c^{(n)}_{i, 2i-1} = c^{(n)}_{i, 2i} =
\lambda^n$.
Denote by $x_n(1), ... , x_n(2^n)$ the vertices of $V_n$ enumerated from 
the top to the bottom, see Figure 3. %\ref{fig Tree}.

\begin{figure}[ht!]\label{fig Tree}
\begin{center}
\includegraphics[scale=0.6]{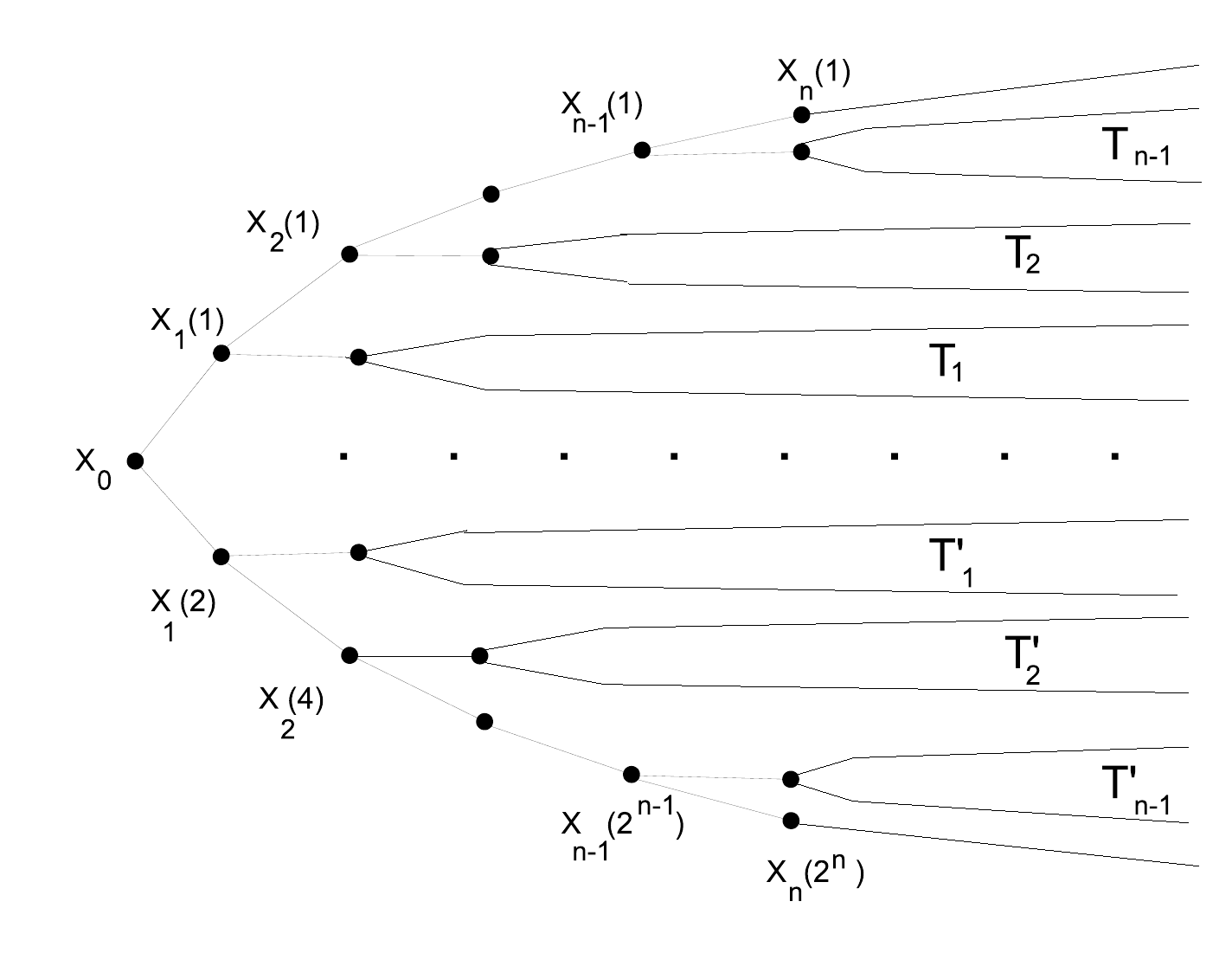}
\caption{Symmetric harmonic function on the binary tree}
\label{HF_on_tree}
\end{center}
\end{figure}

\begin{proposition}\label{prop formula for HF on tree} Let $(T, c)$ be the 
weighted binary tree defined above.

(1) For each positive $\lambda$ there exists a unique harmonic function $f = 
f_\lambda$ satisfying the following conditions:

(i) $f(x_0) =0$;

(ii) $f(x_1(1)) = - f(x_1(2)) = \lambda$ and
$$
f(x_n(1)) = - f(x_n(2^n)) = \frac{1 + \cdots + \lambda^{n-1}}{\lambda^{n-2}},\ n\geq 2;
$$

(iii) function  $f$ is  constant on each of subtrees $T_i$  and $T_i'$ whose all infinite paths start at the roots  $x_i(1)$ and $x_i(2^i)$, respectively, and go through the vertices $x_{i+1}(2)$ and $x_{i+1}(2^{i+1} -1)$, $i \geq 1$ (see Figure 3).

(2) The function $f_\lambda$ has finite energy if and only if $\lambda >1$.

\end{proposition}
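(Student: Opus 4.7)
The plan is to exploit the imposed anti-symmetry and the constancy condition (iii) to reduce the harmonicity equation on the full tree to a one-variable linear recurrence for $a_n := f(x_n(1))$, solve that recurrence, and finally evaluate a single geometric series for the energy. Set $a_n := f(x_n(1))$; then $f(x_n(2^n)) = -a_n$ by (ii). Since $x_i(1)$ is the root of $T_i$, condition (iii) forces $f|_{T_i} \equiv a_i$, and likewise $f|_{T_i'} \equiv -a_i$. Every vertex is either on the top path, on the bottom path, or lies in the interior of some $T_i$ or $T_i'$; in the latter case all three neighbors lie in the same subtree and carry the same value, so $\Delta f = 0$ there is automatic.

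Thus the only nontrivial harmonicity equations sit on the top and bottom paths. At $x_n(1)$ with $n \geq 1$, the three neighbors are the parent $x_{n-1}(1)$ (edge conductance $\lambda^{n-1}$), the top-path child $x_{n+1}(1)$ (conductance $\lambda^n$), and the branching child $x_{n+1}(2) \in T_n$ (conductance $\lambda^n$, with $f(x_{n+1}(2)) = a_n$). The branching term vanishes and harmonicity collapses to
\[
\lambda^{n-1}(a_n - a_{n-1}) \;=\; \lambda^n (a_{n+1} - a_n),
\]
i.e., $a_{n+1} - a_n = \lambda^{-1}(a_n - a_{n-1})$. With $a_0 = 0$ and $a_1 = \lambda$ dictated by (i)--(ii), induction gives $a_n - a_{n-1} = \lambda^{2-n}$ for $n \geq 1$, so
\[
a_n \;=\; \sum_{k=1}^{n} \lambda^{2-k} \;=\; \frac{1 + \lambda + \cdots + \lambda^{n-1}}{\lambda^{n-2}},
\]
matching (ii); harmonicity at $x_0$ is automatic from the anti-symmetry $f(x_1(1)) + f(x_1(2)) = 0$. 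Taking this as the definition of $f$ delivers existence, and since any candidate satisfying (i)--(iii) obeys the same recurrence with the same initial data, uniqueness follows at once.

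For part (2), the only edges across which $f$ has a nonzero drop are the top-path edges $(x_n(1), x_{n+1}(1))$ and, by anti-symmetry, their bottom-path mirror images; all branching edges from $x_n(1)$ into $T_n$ and all internal edges of the subtrees contribute zero because both endpoints share a common value. Consequently
\[
\|f\|^2_{\mc H_E} \;=\; 2\sum_{n=0}^{\infty} \lambda^n (a_{n+1}-a_n)^2 \;=\; 2\sum_{n=0}^{\infty} \lambda^n \cdot \lambda^{2(1-n)} \;=\; 2\lambda^2 \sum_{n=0}^{\infty}\lambda^{-n},
\]
which converges if and only if $\lambda > 1$. The only potential subtlety is verifying that (i)--(iii) together with harmonicity really do leave no residual degrees of freedom on the subtrees; but the constancy in (iii) together with $x_i(1) \in T_i$ pins down each subtree value to equal the corresponding path value at its root, so nothing beyond $a_0$ and $a_1$ remains unfixed, and the calculation above proceeds without further obstacle.
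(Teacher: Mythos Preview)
Your proof is correct. The paper states this proposition inside an example and does not supply a proof, so there is nothing to compare against; your argument---reducing harmonicity to the one-variable recurrence $a_{n+1}-a_n=\lambda^{-1}(a_n-a_{n-1})$ along the extremal paths via the constancy condition (iii), and then reading off the energy as a geometric series---is exactly the natural way to fill in the omitted details.
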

\end{example}

\subsection{Harmonic functions on the Pascal graph}

By definition, the Pascal graph is the 0-1 Bratteli diagram  with the sequence 
of incidence matrices $(A_n)_{n\ge 0}$ of the size $(n+1) \times (n+2)$ 
where
$$
A_n =  \left(
  \begin{array}{ccccccc}
    1 & 1 & 0 & 0 & \cdots & 0 & 0\\
    0 & 1 & 1 & 0 & \cdots  & 0 & 0 \\
    0 & 0 & 1 & 1 & \cdots & 0  & 0\\
    \cdots & \cdots & \cdots & \cdots  & \cdots  & \cdots  & \cdots\\
    0 & 0 & 0 & 0& \cdots  & 1 & 1 \\
  \end{array}
\right)
$$

Every vertex $v\in V_n$ of the Pascal graph can be enumerated by two 
numbers (coordinates) $(n, i)$, where  $0 \leq i \leq n$ is the position of 
$v$ in $V_n$ (it is assumed that the set of vertices $\{(x, 0) : x \in \N_0\}$ 
is the upper bound line of the graph).

It can be proved that the algorithm of finding harmonic functions from 
Section \ref{sect Bratteli} is applicable for the Pascal graph. In other 
words, the equation
\be\label{eq HF for Pascal}
\overleftarrow{P}_n f_{n+1} = f_n - \overrightarrow{P}_{n-1} f_{n-1}
\ee
always has a solution for $f_{n+1}$ assuming that $f_n$ and $f_{n-1}$ 
have been determined in the previous steps. Moreover, the solution set of 
this equation is one-dimensional for every $n$. 

Equation (\ref{eq HF for Pascal}) becomes more transparent if we 
additionally require that the
conductance function $c$ is defined by the rule $c(e) = \lambda^n$, for any 
$e \in E_n$, and the harmonic function $f$ vanishes at $(0,0)$. Then  one 
can easily find the explicit form of  $\overleftarrow{P}_n$ for any $n \geq 
1$:
$$
\overleftarrow{P}_n =
\left(
  \begin{array}{cccccc}
    \frac{\lambda}{1 + \lambda} & \frac{\lambda}{1 + \lambda}  & 0 & 0 
    & \cdots  & 0\\
    0 & \frac{\lambda}{2 + \lambda}  & \frac{\lambda}{2 + \lambda} & 0 
    & \cdots & 0\\
    0 & 0 & \frac{\lambda}{2 + \lambda} & \frac{\lambda}{2 + \lambda} & 
    \cdots & 0\\
    \cdots & \cdots & \cdots & \cdots & \cdots & \cdots \\
    0 & 0 & 0 & \cdots &  \frac{\lambda}{1 + \lambda}  &  \frac{\lambda}{1 
    + \lambda}  \\
  \end{array}
\right)
$$

We say that a harmonic function $h$ on the Pascal graph $(B, c)$ is {\em 
symmetric} if it satisfies  the condition $h(n, i) = - h(n, n-i)$ for any $n$ 
and $0 \leq i \leq n$.

\begin{lemma} Let $(B,c)$ be a weighted Pascal graph. Then  $Harm$ is a 
non-empty infinite dimensional space containing the subspace of symmetric 
harmonic functions. Moreover, this subspace is also infinite dimensional.
\end{lemma}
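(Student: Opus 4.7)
The plan is to derive both assertions from the general inductive scheme developed in Proposition \ref{prop suff cond for exist harm fns}, together with the left--right reflection symmetry of the Pascal diagram. First I would record that for every $n\ge 0$ the kernel of $\overleftarrow{P}_n$ is the line spanned by $\kappa_n:=((-1)^i)_{i=0}^{n+1}$: each row of $\overleftarrow{P}_n$ has its two nonzero entries equal and in consecutive positions, forcing $v_{i+1}=-v_i$. Combined with the solvability statement given right before \eqref{eq HF for Pascal}, this yields a one--parameter family of admissible extensions at each level; choosing the free parameter independently at each of the infinitely many levels produces linearly independent elements of $\mc Harm$, establishing non-emptiness and infinite dimensionality.

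For the second claim, I would introduce the involution $\rho$ on functions $V\to\R$ defined by $(\rho f)(n,i):=-f(n,n-i)$. The map $(n,i)\mapsto(n,n-i)$ is a graph automorphism of $(B,c)$---the conductance $c(e)=\lambda^n$ depends only on the level and is therefore reflection-invariant---so $\rho$ commutes with the Laplacian and hence preserves $\mc Harm$. The ``symmetric'' harmonic functions of the lemma are precisely the fixed points $\mc Harm^{\rho}$. I would then rerun the inductive construction inside the reflection $(-1)$-eigenspaces $W_n:=\{v\in\R^{n+1}:v_i=-v_{n-i}\}$. The displayed form of $\overleftarrow{P}_n$ shows that its coefficients satisfy $\alpha_i=\alpha_{n-i}$, giving the intertwining identity $\tau_n\overleftarrow{P}_n=\overleftarrow{P}_n\tau_{n+1}$ (and its analogue for $\overrightarrow{P}_{n-1}$), where $\tau_k$ denotes index reversal on $\R^{k+1}$. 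Consequently, if $f_{n-1}\in W_{n-1}$ and $f_n\in W_n$, then the right-hand side of \eqref{eq HF for Pascal} lies in $W_n$, and any preimage $g$ can be averaged via $g\mapsto\tfrac12(g-\tau_{n+1}g)$ to give a preimage in $W_{n+1}$. Hence the recursion stays inside the symmetric subspace.

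The main obstacle, which I expect to carry the real content of the proof, is counting the surviving degrees of freedom inside this symmetrized recursion: $\kappa_n$ itself lies in $W_{n+1}$ iff $(-1)^i=-(-1)^{n+1-i}$ for every $i$, iff $n$ is even. Thus at every odd level the symmetric extension is uniquely determined by the data already built, whereas at every even level exactly one genuinely free scalar remains. Since there are infinitely many even levels, the construction produces infinitely many linearly independent symmetric harmonic functions, which proves that $\mc Harm^\rho$ is itself infinite dimensional and completes the lemma.
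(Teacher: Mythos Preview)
Your argument is correct and considerably more detailed than anything the paper supplies. The paper does not give a proof of this lemma at all: it only records, in the paragraph preceding \eqref{eq HF for Pascal}, that the recursion is always solvable and that the solution set at each step is one-dimensional, and then simply states the lemma. Your treatment of the first claim follows exactly this line (surjectivity of $\overleftarrow{P}_n$ from its staircase shape, one free parameter per level), so for that part you and the paper agree.

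For the symmetric subspace the paper offers nothing, and what you add is genuinely new relative to the text: the intertwining $\tau_n\overleftarrow{P}_n=\overleftarrow{P}_n\tau_{n+1}$, the antisymmetrization $g\mapsto\tfrac12(g-\tau_{n+1}g)$ to stay inside $W_{n+1}$, and the parity observation that $\kappa_n\in W_{n+1}$ exactly when $n$ is even. This last point is the sharpest part of your write-up and is not hinted at in the paper. One small caveat worth flagging in your exposition: although the lemma is phrased for an arbitrary weight $c$, your identification of $\ker\overleftarrow{P}_n$ with the alternating vector and your use of the reflection automorphism both rely on the specific weighting $c(e)=\lambda^n$ displayed just before the lemma (equal entries in each row, level-dependent conductance). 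For general $c$ the kernel is still one-dimensional (so $\mc Harm$ is still infinite dimensional), but the symmetry argument needs $c$ to be reflection-invariant; you should state that hypothesis explicitly rather than leave it implicit.
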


For the case when $c=1$, we can find an explicit formula of symmetric 
harmonic function.

\begin{proposition}\label{prop HF on Pascal} Define $h(0,0) = 0$ and  set, for every vertex $v = (n, i)$,
\be\label{HF on Pascal}
h(n,i) := \frac{n(n+1)}{2} - i(n+1),
\ee
 where  $0 \leq i \leq n$ and $n \geq 1$.
Then $h : V \to \mathbb R$ is an integer-valued harmonic function on $(B, 1)$ satisfying the symmetry condition $h(n, i) = - h(n, n-i)$ (see Figure 4) .
\end{proposition}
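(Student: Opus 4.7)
The plan is to verify the stated formula defines a harmonic function by direct computation, exploiting the very simple neighbor structure of the Pascal graph. From the incidence matrices $A_n$ one reads off that the (unoriented) neighbors of a vertex $(n,i)$ with $1 \le n$ and $0 \le i \le n$ are: $(n-1,i-1)$ (if $i \ge 1$), $(n-1,i)$ (if $i \le n-1$), and both $(n+1,i)$ and $(n+1,i+1)$. Since $c \equiv 1$, the graph-Laplacian reads
\[
(\Delta h)(n,i) \;=\; \bigl(\text{number of neighbors of $(n,i)$}\bigr) h(n,i) \;-\; \sum_{y \sim (n,i)} h(y),
\]
so one only has to check the mean-value property at three types of vertex: the root, boundary vertices, and interior vertices.

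First I would record the elementary identity $h(n, n-i) = -h(n,i)$, which is immediate from the formula $h(n,i) = \tfrac{n(n+1)}{2} - i(n+1)$. This symmetry cuts the boundary work in half: harmonicity at $(n,n)$ will follow automatically from harmonicity at $(n,0)$.

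Next comes the main computation, the interior case $0 < i < n$. Using $h(m,j) = \tfrac{m(m+1)}{2} - j(m+1)$, I would expand
\[
h(n-1,i-1) + h(n-1,i) + h(n+1,i) + h(n+1,i+1),
\]
grouping the two level-$(n-1)$ and two level-$(n+1)$ terms. Collecting the quadratic-in-$n$ part yields $n(n-1) + (n+1)(n+2) = 2n^2 + 2n$, and collecting the linear-in-$i$ part yields $-(2i-1)n - (2i+1)(n+2) = -4in - 4i - 2 + 2$. The total simplifies to $2(n+1)(n - 2i) = 4 h(n,i)$, which is exactly what harmonicity at an interior (four-neighbor) vertex requires.

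For the remaining cases I would handle the root $(0,0)$ by plugging in $h(1,0) = 1$ and $h(1,1) = -1$ to see $\Delta h(0,0) = -1 + 1 = 0$, and the left-boundary vertex $(n,0)$ for $n \ge 1$ (three neighbors) by computing
\[
h(n-1,0) + h(n+1,0) + h(n+1,1) \;=\; \tfrac{(n-1)n}{2} + \tfrac{(n+1)(n+2)}{2} + \tfrac{(n-1)(n+2)}{2} \;=\; \tfrac{3n(n+1)}{2} \;=\; 3 h(n,0).
\]
Right-boundary vertices $(n,n)$ are then handled by the symmetry identity above. Integrality of $h$ is immediate, since $n(n+1)/2$ is always an integer. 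The only place that demanded care was keeping the $n\pm 1$ and $i\pm 1$ shifts straight in the interior calculation; everything else is bookkeeping.
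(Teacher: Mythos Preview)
Your proof is correct: the paper does not include a proof of this proposition at all (it states the formula and refers to the figure), so your direct verification via the mean-value property at the root, boundary, and interior vertices is exactly the natural argument the reader is implicitly expected to supply. One small cosmetic slip: in your interior computation you write $n(n-1)+(n+1)(n+2)=2n^2+2n$, but the right-hand side is $2n^2+2n+2$; the stray ``$+2$'' you then insert in the linear-in-$i$ part compensates for this, and your final identity $2(n+1)(n-2i)=4h(n,i)$ is right.
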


\begin{figure}[ht!]
\begin{center}
\includegraphics[scale=0.6]{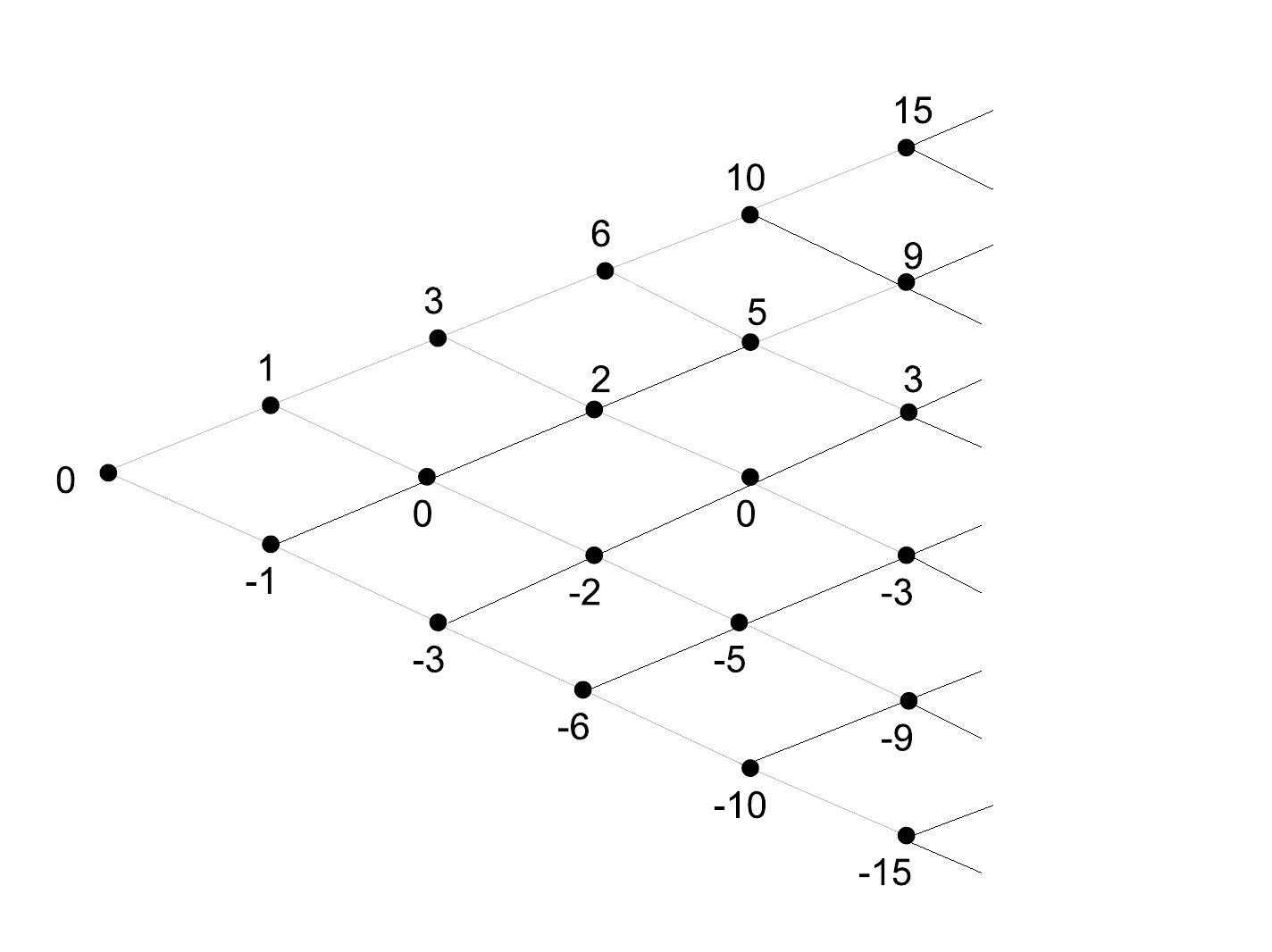}
\caption{Harmonic function on the Pascal graph.}
\label{Fig_Pascal}
\end{center}
\end{figure}

\subsection{Harmonic functions on stationary Bratteli diagrams}

In case of stationary Bratteli diagrams, we can clarify the structure of  the 
space $Harm$. 

Let $A$ be the incidence matrix of a stationary Bratteli diagram $B$, and 
suppose  $A$ has $d \times d$ size. Assume that the conductance function 
$c$ has the property: $c_e = c_{xy} = \lambda^n$ for any $e \in E_n$ 
with $s(e) =x \in V_n, r(e)= y \in V_{n+1}$. Then the associated matrix 
$C_n = \lambda^n A$ ($C_n$ is defined in Section \ref{sect Bratteli}).

We start with rewriting relations \eqref{D_n and C_n represents harm fn}
and \eqref{eq ness-suff for harmonic f-n} for the stationary case and
the chosen $c$
\be\label{eq stationary}
A^T(f_{n-1} - f_n(x) \mathbf 1_d) + \lambda A(f_{n+1} - f_n(x) \mathbf 1_d) =0
\ee
where $x \in V_n, n \in \N$ and $\mathbf 1_d = (1, ... , 1)^T \in \R^d$.

\begin{proposition}\label{prop stat BD}
Let the weighted stationary Bratteli diagram $(B,c)$ be as defined above. 
Suppose that $A = A^T$, and $A$ is invertible. Then any harmonic function 
$f =(f_n)$ on $(B,c)$ can be found by the formula:
\be\label{eq HF stat BD}
f_{n+1} (x) = f_1(x) \sum_{i =0}^n \lambda^{-i}
\ee
where  $x \in V$.
\end{proposition}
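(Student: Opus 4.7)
The plan is to prove the formula by induction on $n$, using the stationary form of the harmonic recurrence from Proposition \ref{thm ness-suff matrix cond for harm fns}.

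First, I would specialize equation \eqref{D_n and C_n represents harm fn} to the stationary case at hand. With $C_n = \lambda^n A$ and $A = A^T$, the recurrence collapses to
$$\lambda^n A f_{n+1} \;=\; D_n f_n \,-\, \lambda^{n-1} A f_{n-1}, \qquad n \geq 1,$$
and, because $A$ is invertible, this expresses $f_{n+1}$ uniquely in terms of $f_n$ and $f_{n-1}$. Combined with the Dirichlet-type constraint $D_0 f_0 = A f_1$ coming from the harmonic equation at level $V_0$ (vertices in $V_0$ have neighbors only in $V_1$), the whole sequence $(f_n)_{n \geq 0}$ is thus determined by $f_1$ alone.

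With this reduction in hand, I would verify the claim $f_{n+1}(x) = f_1(x) S_n$, where $S_n := \sum_{i=0}^{n} \lambda^{-i}$, by induction on $n$. The base case $n = 0$ is immediate since $S_0 = 1$. For the inductive step, I would substitute $f_n = S_{n-1} f_1$ and $f_{n-1} = S_{n-2} f_1$ into the recurrence above, factor out $f_1$, and reduce the verification to a scalar identity that should follow from the telescoping relation
$$\lambda S_n \;=\; (1+\lambda) S_{n-1} \,-\, S_{n-2},$$
itself a direct consequence of $S_n - S_{n-1} = \lambda^{-n}$.

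The main obstacle I foresee is reconciling the pointwise scalar ansatz with the off-diagonal action of $A$ in the recurrence. Since the claimed formula multiplies $f_1(x)$ vertex-by-vertex by the single scalar $S_n$, while the recurrence genuinely mixes the coordinates of $f_n$ via $A$, the induction will secretly be invoking the $V_0$-boundary relation $D_0 f_0 = A f_1$ (together with the symmetry $A = A^T$) to absorb the matrix action and leave only a scalar identity. I would therefore carry out the first inductive step $n = 1$ with particular care, so as to pin down precisely which compatibility condition on $f_1$ is being used, and then propagate it uniformly in $n$ via the geometric-series identity above.
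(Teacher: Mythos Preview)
Your approach differs from the paper's. The paper does not verify the closed form by induction; instead it rewrites the stationary harmonic relation at a vertex $x\in V_n$ as
\[
A^{T}\bigl(f_{n-1}-f_n(x)\mathbf 1_d\bigr)+\lambda\,A\bigl(f_{n+1}-f_n(x)\mathbf 1_d\bigr)=0,
\]
and then, using $A=A^{T}$ together with the invertibility of $A$, cancels the common matrix factor to obtain directly the \emph{scalar} first-order difference relation
\[
\lambda\bigl(f_{n+1}(x)-f_n(x)\bigr)=f_n(x)-f_{n-1}(x),\qquad x\in V.
\]
From this, the formula follows immediately by iterating down to $f_1-f_0$ (with $f_0=0$) and summing the geometric series; no induction on the closed form is carried out.

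The obstacle you correctly flag --- that substituting the ansatz $f_k=S_{k-1}f_1$ into $\lambda^{n}Af_{n+1}=D_nf_n-\lambda^{n-1}Af_{n-1}$ leaves $D_nf_1$ on one side and $Af_1$ on the other, which are not proportional --- is precisely the step where the paper's argument does its work, and your proposed remedy (the level-$V_0$ boundary relation $D_0f_0=Af_1$) is not what the paper uses and will not resolve it. The paper's device is instead to observe that the diagonal entries of $D_n$ are exactly $\lambda^{n-1}$ times the row sums of $A^{T}+\lambda A$, so that at the $x$-th component $[D_nf_n]_x$ can be rewritten as $\lambda^{n-1}[(A^{T}+\lambda A)(f_n(x)\mathbf 1_d)]_x$; this puts the same matrix $A=A^{T}$ in front of every term and allows it to be stripped. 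If you want to salvage the inductive route, you should first perform this rewriting to reach the scalar difference relation above; your telescoping identity $\lambda S_n=(1+\lambda)S_{n-1}-S_{n-2}$ is then literally that relation, and the induction becomes trivial.
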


\begin{proof}
We first observe that, as $V_n = V$ for $n \geq 1$, we can interpret  relation (\ref{eq stationary}) as a sequence of relations between vectors $f_n$ that hold on the same space $\R^d$, where $x$ is  any vertex from $V$. Moreover, using the properties of  $A$, we obtain
$$
(f_{n-1} - f_n(x) \mathbf 1_d) + \lambda (f_{n+1} - f_n(x) \mathbf 1_d) =0.
$$
From this equation between vectors, we deduce that it holds for any coordinate $y\in V$, in particular, for $y =x$. Hence,
$$
\lambda (f_{n+1}(x) - f_n(x)) = f_{n}(x) - f_{n-1}(x),
$$
and, for every $n \geq 1$,
$$
f_{n+1}(x)  - f_n(x) = \frac{1}{\lambda^n}(f_1(x) - f_0(x)) = \frac{1}{\lambda^n}f_1(x).
$$
Summation of these relations gives
$$
f_{n+1} (x) = f_1(x) \sum_{i =0}^n \lambda^{-i}.
$$
\end{proof}

It follows from Proposition \ref{prop stat BD} that we can explicitly describe elements of the space $Harm$ for this class of weighted stationary Bratteli diagrams.

\begin{corollary} Let $(B, c)$ be as in Proposition \ref{prop stat BD}.

(1) The dimension of the space $Harm$  is $d-1$ where $d = |V|$.

(2) If $\lambda > 1$, then every harmonic function on $(B, c)$ is bounded.

\end{corollary}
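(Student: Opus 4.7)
The plan is to read off both conclusions directly from the explicit formula
$$
f_{n+1}(x) = f_1(x) \sum_{i=0}^n \lambda^{-i}, \qquad x \in V, \; n \geq 0,
$$
supplied by Proposition \ref{prop stat BD} (which, recall, encodes the normalization $f_0 \equiv 0$ permitted because elements of $\mc H_E$ are defined up to a constant). This formula exhibits the correspondence $f_1 \longmapsto f = (f_n)$ as a linear map from $\R^d$ onto $Harm$, so both assertions reduce to linear algebra together with an elementary geometric-series estimate.

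\textbf{For part (1).} First I would note that, by the formula above, a harmonic function $f$ normalized so that $f_0 \equiv 0$ is \emph{uniquely} determined by the vector $f_1 \in \R^d$, and conversely every choice of $f_1$ produces a function satisfying the recursion \eqref{eq stationary} at every vertex of $V_n$ for $n \geq 1$. Next I would impose harmonicity at the remaining root level $V_0$: with $f_0 \equiv 0$, the equation $\Delta f(o) = 0$ reduces to the single independent linear constraint $\sum_{y \sim o} c_{oy}\, f_1(y) = 0$ on $f_1$. Combining these two facts identifies $Harm$ with the codimension-$1$ hyperplane in $\R^d$ cut out by this constraint, so $\dim Harm = d-1$. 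Equivalently, one may parameterize all solutions of the recursion by $f_1 \in \R^d$ and then quotient out the $1$-dimensional subspace $\R\mathbf{1}$ that corresponds to globally constant functions (which are always harmonic, by Remark (1) following Remark \ref{rem_basic facts on Lplc and Mrkv}).

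\textbf{For part (2).} Assume $\lambda > 1$. The partial geometric sums satisfy
$$
\sum_{i=0}^n \lambda^{-i} \;\leq\; \sum_{i=0}^{\infty} \lambda^{-i} \;=\; \frac{\lambda}{\lambda - 1},
$$
so the explicit formula gives
$$
|f_{n+1}(x)| \;\leq\; \frac{\lambda}{\lambda - 1}\, |f_1(x)| \;\leq\; \frac{\lambda}{\lambda - 1}\, \max_{y \in V}|f_1(y)|
$$
for every $x \in V$ and $n \geq 0$. Since $|V| = d < \infty$, the right-hand side is a finite constant independent of $n$ and $x$. Together with $f_0 \equiv 0$, this yields a uniform bound on $f$ over the entire (infinite) vertex set $\bigsqcup_n V_n$ of $B$, whence $f \in \ell^\infty(V)$.

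\textbf{Main obstacle.} The substantive step is the ``$-1$'' in part (1): one has to carefully reconcile (a) the harmonicity equation at the root level with (b) the freedom to add a global constant, and to verify that the constant vector $\mathbf{1} \in \R^d$ is precisely the direction removed by either reduction. The invertibility and symmetry of $A$ from Proposition \ref{prop stat BD} are what guarantee that no further degeneracy occurs, and once this is pinned down, both parts of the corollary follow from the clean algebraic form of the formula in Proposition \ref{prop stat BD}.
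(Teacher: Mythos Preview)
Your proof is correct and follows exactly the path the paper intends: the corollary is stated there without proof, as an immediate consequence of the explicit formula in Proposition \ref{prop stat BD}, and your argument is precisely how one reads off both conclusions.

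One small caveat concerning your ``equivalently'' formulation in part (1). Under the normalization $f_0\equiv 0$ built into the formula, the choice $f_1=\mathbf 1$ does \emph{not} produce a globally constant function: you get $f_{n+1}(x)=\sum_{i=0}^n\lambda^{-i}$, which is constant across $x\in V$ but varies with $n$. So the direction $\R\mathbf 1\subset\R^d$ is not the line of constants once $f(o)=0$ has been imposed. Your first reduction --- impose harmonicity at the root, yielding the single nontrivial linear constraint $\sum_{y\sim o}c_{oy}f_1(y)=0$ --- is the clean and correct one, and already gives $\dim Harm=d-1$. The alternative route can be repaired if you work in the full $(d+1)$-dimensional parameter space $(f(o),f_1)$ before normalizing, but as written it does not match the formula you are invoking.
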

\section{Harmonic functions of finite and infinite energy on 
combinatorial graphs}\label{sect HF and energy}

\subsection{Energy space for a Bratteli diagram}

Recall that, given a function $u : V \to \R$ defined on the vertex set $V$ of
 an electrical network $(G, c)$, its energy $\|u\|_{\mathcal H_E}$ is 
 computed by 
 $$
 || u||^2_{\mc H_E} = \frac{1}{2}\sum_{x, y}c_{xy}(u(x) - u(y))^2.
 $$ 
 In case of a harmonic function $h \in Harm$, one
can also use  the formulas from Lemma \ref{lem for energy of harm fns} to 
 find the energy of $h$.

Let $(B, c)$ be a weighted Bratteli diagram. Denote
\be\label{max/min for c}
%\alpha_n = \min\{c(x) : x \in V_n\},  \ \ \
\beta_n = \max\{c(x) : x \in V_n\}.
\ee

\begin{theorem}\label{thm infinite energy}
Let $f$ be a harmonic function on a weighted Bratteli diagram $(B, c)$.
Then
\be\label{energy estimation from below}
\sum_{n =0}^\infty \frac{I_1^2}{\beta_n |V_n|} \leq   
\|f\|_{\mathcal H_E}^2,
\ee
where $I_1= \sum_{x\in V_{1}} c_{ox}(f(x) - f (o))$ was defined in 
Lemma \ref{lem I_n = const}.
\end{theorem}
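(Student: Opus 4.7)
The plan is to exploit the graded structure of the Bratteli diagram to rewrite $\|f\|^2_{\mc H_E}$ as a sum over consecutive levels, then bound each layer from below via a per-vertex Cauchy--Schwarz inequality involving the local currents, and finally invoke the uniform lower bound $\sum_{x\in V_n}(I_n(x))^2 \geq I_1^2/|V_n|$ of Lemma \ref{lem I_n = const}.

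First I would use the defining property that in a Bratteli diagram every edge joins a vertex of $V_n$ with a vertex of $V_{n+1}$, so
$$
\|f\|^2_{\mc H_E} = \sum_{n=0}^\infty \mc E_n, \qquad \mc E_n := \sum_{x \in V_n}\, \sum_{\substack{z \in V_{n+1} \\ z \sim x}} c_{xz}\bigl(f(z) - f(x)\bigr)^2.
$$
For a harmonic function, Lemma \ref{current for harm fns} lets me identify $I_n(x)$ (for $x \in V_n$, $n \geq 1$) with the outgoing current $I_{out}(x) = \sum_{z\sim x,\, z\in V_{n+1}} c_{xz}(f(z)-f(x))$, and the same formula also defines a natural value $I_0(o)$ at the root that equals the quantity $I_1$ appearing in the statement. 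At each vertex $x \in V_n$, Cauchy--Schwarz then gives
$$
\bigl(I_n(x)\bigr)^2 \leq \Biggl(\sum_{\substack{z\in V_{n+1}\\ z\sim x}} c_{xz}\Biggr) \sum_{\substack{z\in V_{n+1}\\ z\sim x}} c_{xz}\bigl(f(z)-f(x)\bigr)^2 \leq \beta_n \sum_{\substack{z\in V_{n+1}\\ z\sim x}} c_{xz}\bigl(f(z)-f(x)\bigr)^2,
$$
since the partial sum of $c_{xz}$ over $z\sim x$ in $V_{n+1}$ is at most $c(x) \leq \beta_n$. Summing over $x \in V_n$ yields $\sum_{x\in V_n}(I_n(x))^2 \leq \beta_n \mc E_n$, i.e.\ $\mc E_n \geq \beta_n^{-1}\sum_{x \in V_n}(I_n(x))^2$.

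Combining this with the estimate \eqref{sum I_n^2} from Lemma \ref{lem I_n = const} gives $\mc E_n \geq I_1^2/(\beta_n|V_n|)$, and summing over $n \geq 0$ produces the theorem. The only delicate point will be the $n=0$ endpoint: Lemma \ref{lem I_n = const} is formally stated for $n\geq 1$, so I would briefly verify that the $n=0$ term works with the convention $I_0(o) := I_{out}(o)$, in which case $I_0(o) = I_1$ by Kirchhoff's law and $\sum_{x \in V_0}(I_0(x))^2 = I_1^2 = I_1^2/|V_0|$ holds as an equality. No deeper obstacle is expected; the remainder is routine bookkeeping on the layered edge set and the fact that the double-counting factor $1/2$ in the energy form is absorbed by parametrizing each edge exactly once through its endpoint in $V_n$.
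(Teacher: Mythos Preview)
Your proposal is correct. The paper itself does not include a proof of Theorem \ref{thm infinite energy} (it is deferred to \cite{BJ2019}), but your argument follows exactly the route the paper has set up through Lemma \ref{current for harm fns} and Lemma \ref{lem I_n = const}: the level-by-level decomposition of the energy, the per-vertex Cauchy--Schwarz bound $(I_n(x))^2 \le \beta_n \sum_{z\in V_{n+1},\,z\sim x} c_{xz}(f(z)-f(x))^2$, and then the inequality \eqref{sum I_n^2}. Your handling of the $n=0$ term via $I_0(o)=I_{out}(o)=I_1$ and $|V_0|=1$ is the right bookkeeping; this is almost certainly the intended proof.
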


It immediately follows from the proved inequality (see (\ref{energy 
estimation from below})) that the following result holds.

\begin{corollary}\label{cor harm functions have inf energy}
Suppose that a weighted Bratteli diagram $(B,c)$ satisfies the condition
 \be\label{divergent series}
\sum_{n =0}^\infty (\beta_n |V_n|)^{-1} = \infty
\ee
where $V = \bigcup_n V_n$ and $\beta_n = \max\{c(x) : x \in V_n\}$.
Then any nontrivial harmonic function has infinite energy, i.e.,
$\mathcal Harm \cap \mathcal H_E = \{\mathrm{const}\}$.
In other words, such a $(B,c)$ does not support non-constant harmonic 
functions of finite energy.
\end{corollary}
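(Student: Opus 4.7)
Plan: The corollary is intended as a short consequence of Theorem \ref{thm infinite energy}, so the bulk of the work has already been done. My approach is the contrapositive: assume $h \in \mathcal Harm \cap \mathcal H_E$ and derive that $h$ must be constant. First I would rewrite the left-hand side of (\ref{energy estimation from below}) by factoring out the constant $I_1^2$ (recall $I_1 = \sum_{x \in V_1} c_{ox}(h(x) - h(o))$ depends on $h$ but not on $n$), so that the inequality of Theorem \ref{thm infinite energy} reads
$$
I_1^2 \sum_{n=0}^{\infty} \frac{1}{\beta_n |V_n|} \;\leq\; \|h\|_{\mathcal H_E}^2.
$$
Under the hypothesis \eqref{divergent series}, the series on the left diverges, so the right-hand side being finite forces $I_1 = 0$.

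Next I would translate the conclusion $I_1 = 0$ into vanishing of all level currents via Lemma \ref{lem I_n = const}, which yields $I_n = 0$ for every $n \geq 1$. Combining this with the recursion (\ref{D_n and C_n represents harm fn}) in Proposition \ref{thm ness-suff matrix cond for harm fns}, one reads off that the values of $h$ are rigidly propagated from level to level once the initial data at $V_0 = \{o\}$ and $V_1$ are fixed, and vanishing of the current $I_1$ across the root cut identifies the only compatible solution as $h \equiv h(o)$.

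I expect the main obstacle to be the last step, upgrading the scalar identity $\sum_{x\in V_n} I_n(x) = I_n = 0$ to pointwise vanishing of each gradient $h(x) - h(y)$ across edges $(xy) \in E_n$. The sum being zero per se allows cancellations of positive and negative contributions. To rule these out, I would combine the maximum principle of Proposition \ref{prop max/min principl} with the fact that $h$ is now harmonic everywhere (including at $o$, by $I_1 = 0$): if $h$ were non-constant, the sequences $\{M_n(h)\}$, $\{m_n(h)\}$ would be strictly monotone, providing a non-vanishing flux across some level and contradicting $I_n = 0$. An alternative, perhaps cleaner, route is to invoke the Royden decomposition (\ref{eq Royden}) $\mathcal H_E = \mathcal Fin \oplus \mathcal Harm_0$ together with the Gauss--Green identity (Theorem \ref{thm G-G}), so that $I_1$ represents the only nonvanishing boundary contribution for $h \in \mathcal Harm_0$; its vanishing places $h$ in $\mathcal Fin \cap \mathcal Harm_0 = \{0\}$, giving $h$ constant in $\mathcal H_E$.
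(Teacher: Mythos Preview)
Your core argument---factor $I_1^2$ out of the left side of \eqref{energy estimation from below} and use the divergence hypothesis \eqref{divergent series} together with $\|h\|_{\mathcal H_E}<\infty$ to force $I_1=0$---is exactly what the paper does. The paper's entire proof is the single sentence ``It immediately follows from the proved inequality (see (\ref{energy estimation from below})) that the following result holds,'' so at that level your approach and the paper's coincide.

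Where you diverge is in everything after your first paragraph. The paper does \emph{not} carry out the passage from $I_1=0$ to ``$h$ is constant''; it treats the corollary as immediate and stops. Your instinct that this step deserves justification is reasonable, but neither of your two proposed routes closes it cleanly. For the maximum-principle route, strict monotonicity of $M_n(h)$ and $m_n(h)$ does not by itself contradict $I_n=\sum_{x\in V_n}I_n(x)=0$: the total level current is a signed sum and can vanish by cancellation even when individual edge differences are nonzero. For the Royden/Gauss--Green route, the boundary term in Theorem~\ref{thm G-G} is a limit over exhaustions, not the single quantity $I_1$, so the identification ``$I_1$ is the only nonvanishing boundary contribution'' is not justified as stated, and the conclusion $h\in\mathcal Fin\cap\mathcal Harm_0$ does not follow from $I_1=0$ alone.

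In short: your plan matches the paper's argument, and your additional material attempts to fill a step that the paper simply leaves implicit; but those attempted completions, as written, are not rigorous.
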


It is not difficult to realize the condition of Corollary 
\ref{cor harm functions have inf energy} for a weighted Bratteli diagram 
$(B, c)$. For instance, suppose that $c_{xy} = 1$ for any edge $e = (xy)$ 
If  the sequence $(|V_n|)$ satisfies $\sum_{n =0}^\infty |V_n|^{-1} = 
\infty$, then any harmonic function has infinite energy.

For example, if $c=1$ for the Pascal graph  considered in Section 
\ref{sect HF trees, etc}, 
 then Proposition \ref{prop HF on Pascal} gives us the explicitly defined 
 harmonic  function $h$. It follows from Corollary \ref{cor harm functions 
 have inf energy} that $\|h\|_{\mc H_E} = \infty$.
Moreover,  there is no harmonic functions of finite 
energy on the Pascal graph with the conductance function $c =1$.

The following results can be obtained from Proposition
\ref{lem for energy of harm fns}.
\begin{lemma}
For the Pascal graph, 
$$
\| f\|^2_{\mathcal H_E} = \frac{1}{2}\sum_{x \in V} \sum_{y\sim x}
c_{xy} (f^2(y) - f^2(x))
$$
for any harmonic function.
\end{lemma}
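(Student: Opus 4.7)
The plan is to derive the claimed formula as a direct algebraic consequence of Proposition \ref{lem for energy of harm fns}(i), specifically identity \eqref{eq energy for harm}. That proposition states that for any $f \in \mathcal{H}_E$ harmonic on $(G,c)$,
\[
\|f\|^2_{\mathcal{H}_E} = \frac{1}{2}\sum_{x \in V} c(x)\bigl((Pf^2)(x) - f^2(x)\bigr).
\]
The target identity is obtained by rewriting the inner expression in terms of edge-conductances, using the defining relations $p(x,y) = c_{xy}/c(x)$ and $c(x) = \sum_{y\sim x} c_{xy}$. No feature of the Pascal graph is actually used; the statement is a general identity valid for any weighted network, applied here to the Pascal graph as an illustrative case.

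First, I would expand the transition-operator term: by Definition \ref{def Laplace Markov}(2) and Remark \ref{rem_basic facts on Lplc and Mrkv}(2),
\[
c(x)(Pf^2)(x) = c(x)\sum_{y \sim x} \frac{c_{xy}}{c(x)} f^2(y) = \sum_{y \sim x} c_{xy} f^2(y).
\]
Next, I would rewrite the subtracted term using the total-conductance identity $c(x) = \sum_{y\sim x} c_{xy}$, which gives
\[
c(x) f^2(x) = \sum_{y \sim x} c_{xy} f^2(x).
\]
Subtracting these two expressions and summing over $x \in V$ yields
\[
\sum_{x \in V} c(x)\bigl((Pf^2)(x) - f^2(x)\bigr) = \sum_{x \in V}\sum_{y \sim x} c_{xy} \bigl(f^2(y) - f^2(x)\bigr),
\]
and combining this with the prefactor $\tfrac{1}{2}$ from \eqref{eq energy for harm} delivers the stated formula.

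There is essentially no obstacle: the step is a routine unfolding of the definitions of $P$ and $c(x)$, justified pointwise. The only minor caution is the interchange of the order of summation implicit in passing between $\sum_x c(x)(Pf^2)(x)$ and the double sum; this is legitimate because $G$ is assumed locally finite, so each inner sum has finitely many terms, and one invokes Proposition \ref{lem for energy of harm fns}(ii) to ensure absolute convergence whenever $f$ is harmonic off a finite set. Since we assume throughout that $f \in \mathcal{H}_E$, the convergence of the original energy sum in \eqref{eq energy for harm} transfers directly to the rearranged form.
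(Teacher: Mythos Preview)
Your proposal is correct and follows exactly the route the paper indicates: the paper does not give an explicit proof but simply states that the lemma ``can be obtained from Proposition \ref{lem for energy of harm fns},'' and your argument is precisely the routine unfolding of \eqref{eq energy for harm} via $p(x,y)=c_{xy}/c(x)$ and $c(x)=\sum_{y\sim x}c_{xy}$. Your observation that nothing specific to the Pascal graph is used is also accurate.
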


In Proposition \ref{prop stat BD}, we described arbitrary harmonic function 
on a class of stationary Bratteli diagrams with $c_e \in \{\lambda^n : n \in 
\N_0\}$. 

\begin{proposition} Suppose that  a stationary weighted Bratteli diagram 
$(B,c)$ satisfies conditions of Proposition \ref{prop stat BD} with $c_e = 
\lambda^n, e \in E_n$, and $\lambda > 1$.  Let $f = (f_n)$ be a harmonic 
function defined by (\ref{eq HF stat BD}). Then
$\|f\|_{\mc H_E} < \infty$
if and only if the vector $f_1(x)$ is constant.
\end{proposition}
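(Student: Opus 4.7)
The plan is to compute the energy directly from the definition, exploiting both the stationarity of the diagram and the explicit formula for $f$. Since edges of $B$ only connect adjacent levels and $c_e = \lambda^n$ on $E_n$, the energy splits over levels:
\[
\|f\|^2_{\mc H_E} = \sum_{n \ge 0} \lambda^n \sum_{(x,y) \in E_n} (f_n(x) - f_{n+1}(y))^2.
\]
Writing $g := f_1$ and $S_n := \sum_{i=0}^n \lambda^{-i}$, Proposition~\ref{prop stat BD} gives $f_n = S_{n-1}\, g$ on $V$ for $n \ge 1$, so
\[
f_n(x) - f_{n+1}(y) = S_{n-1}(g(x) - g(y)) - \lambda^{-n} g(y).
\]

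Next I would square this identity and sum over $(x,y) \in E_n$. Because the diagram is stationary, the incidence set at each level is prescribed by the same matrix $A$, so the level-$n$ contribution assumes the closed form
\[
E_n \;=\; \lambda^n S_{n-1}^2 \, T_1 \;-\; 2 S_{n-1}\, T_2 \;+\; \lambda^{-n}\, T_3,
\]
where $T_1 := \sum_{x,y} A_{xy}(g(x)-g(y))^2$ and $T_2, T_3$ are analogous $n$-independent bilinears in $g$ and $A$. Since $\lambda > 1$, the partial sum $S_{n-1}$ converges to $\lambda/(\lambda-1) > 0$, so $\lambda^n S_{n-1}^2 T_1$ grows like $\lambda^n$ whenever $T_1 > 0$, while $2 S_{n-1} T_2$ stays bounded and $\lambda^{-n} T_3$ vanishes. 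The proposition then follows by a dichotomy: if $g$ is non-constant, irreducibility of $B$ forces the undirected graph on $V$ with adjacency $A = A^T$ to be connected, whence $T_1 > 0$, $E_n \to \infty$, and $\|f\|^2_{\mc H_E} = \infty$; if $g \equiv c$ is constant, then $(f_n(x) - f_{n+1}(y))^2 = c^2 \lambda^{-2n}$, yielding $E_n = c^2 |E_n|\lambda^{-n}$ and a convergent geometric series.

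The only genuinely subtle point, rather than the routine expansion, is handling the cross term $-2 S_{n-1} T_2$ when $T_1 > 0$: a priori it is negative and one could worry about delicate cancellation. The decisive observation is that each $E_n$ is a sum of squares and is thus non-negative by construction, so regrouping cannot produce spurious negativity; the $\lambda^n$-growth of the $T_1$-term against the bounded $T_2$-term and the vanishing $T_3$-term drives $E_n \to +\infty$, and hence forces divergence of the non-negative series $\sum_n E_n$.
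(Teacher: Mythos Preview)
Your proof is correct, and since the paper states this proposition without proof, your direct level-by-level energy computation is exactly the natural argument the authors presumably had in mind.

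Two minor remarks. First, the step ``irreducibility of $B$ forces the undirected graph on $V$ with adjacency $A = A^T$ to be connected'' is the only place you go beyond pure computation. This inference is correct (for a stationary diagram, irreducibility means some power of $A$ has all positive entries, hence the $A$-graph is connected), but note that the explicit hypotheses of Proposition~\ref{prop stat BD} are only $A = A^T$ and $A$ invertible; irreducibility is the standing convention in the paper rather than a stated hypothesis here. It is worth making that dependence explicit, since without connectedness of the $A$-graph one can take $g$ locally constant on blocks and get $T_1 = 0$ with $g$ non-constant. Second, your final paragraph about ``delicate cancellation'' is more cautious than necessary: once you have $E_n = \lambda^n S_{n-1}^2 T_1 + O(1)$, the divergence when $T_1>0$ is immediate; the non-negativity of $E_n$ is a reassuring sanity check but not logically required.
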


\begin{remark} In \cite{Georgakopoulos2010}, it was proved that if for an 
electrical network $(G, c)$ the total conductance is finite, i.e., $\sum_{e \in 
E} c(e) < \infty$, then there is no nontrivial harmonic function of finite 
energy.  If the network $G$ is represented by a weighted Bratteli diagram $
(B, c)$, we have
$$
\sum_{e \in E} c(e) = \frac{1}{2} \sum_x \sum_y c_{xy} = \frac{1}{2} 
\sum_{n=0}^\infty \sum_{x \in V_n} c(x).
$$
Then we deduce  that if, in particular,  
$\sum_{n=0}^\infty \beta_n |V_n| < \infty$, then there is no nonconstant harmonic function of finite energy on $(B,c)$, see also
\cite{Georgakopoulos2010}. 

Thus, we obtain the following qualitative observation: there two classes of Bratteli
diagrams when all harmonic functions have  infinite energy:
(i) the sequence $(\beta_n |V_n|)$ is either decreasing sufficiently fast, or (ii) it is not growing too fast.
\end{remark}

\subsection{Graph Laplacians and associated harmonic functions on 
generalized Bratteli diagrams}
 \label{sec Laplace}
 
Let $B=(V,E)$ be a 0-1 generalized Bratteli diagram with infinite levels 
$V_n$. Let $q^({0}) = (q^{(0)}_v : v\in V_0)$ be a strictly positive vector. 
Consider a sequence of non-negative  matrices $R_n = (r^{(n)}_{w,v}),
w \in V_n, v \in V_{n+1}$ such that (i) $r^{(n)}_{w,v} >0 $ if and only 
if $(wv) \in E_n$, and (ii) $\sum_{v \in V_{n+1}} r^{(n)}_{w,v} = 1$
for every $w \in V_n$.

Define inductively the vectors $q^{(n)}$ indexed by vertices of the level 
$V_n$  by the relation
$$
q^{(n+1)}_v = \sum_{w, (wv) \in E_n} q^{(n)}_w r^{(n)}_{w,v}.
$$
If $q^{(0)}$ is a probability vector, then all $q^{(n)}$  are probability. 

The sequences $(R_n)$ and $(q^{(n)})$ determine a dual sequence 
of matrices $(S_n)$ where $S_n = (s^{(n)}_{v, w} : w \in V_n, v \in 
V_{n+1})$ and the entries are 
$$
s^{(n)}_{v, w} = \frac{q^{(n)}_w}{q^{(n+1)}_v} r^{(n)}_{w,v}.
$$

The matrices $R_n$ and $S_n$ generate operators which are defined
by the formulas: if $f$ is a bounded function on $V_{n+1}$, then for
any $w \in V_n$,
$$
(T_{R_n}f) (w) = \sum_{v, (wv) \in E_n }r^{(n)}_{w,v} f(v).
$$
Similarly,
 $$
(T_{S_n}g) (v) = \sum_{w, (wv) \in E_n }s^{(n)}_{v,w} g(w).
$$

Let $\mc H_n$ be the linear space of all functions $f =(f(v) : v \in 
V_n)$  such that 
\be\label{eq H_n}
||f ||^2_{\mc H_n} := \sum_{ v \in V_n} q^{(n)}_v f(v)^2 < \infty.
\ee
Then $\mc H_n$, equipped with this norm, is a Hilbert space with the 
inner product
$$
\langle \varphi, \psi \rangle_{\mc H_n} = \sum_{v\in V_n} \varphi(v)
\psi(v) q^{(n)}_v.
$$

\begin{proposition}\label{prop T_P T_Q}
(1) The operators $T_{R_n} : \mc H_{n+1} \to \mc H_n$ 
and $T_{S_n} : \mc H_{n} \to \mc H_{n+1}$ are positive and
 contractive for all $n \in \N_0$. 
 
 (2) $(T_{R_n})^* = T_{S_n}$ and $(T_{S_n})^* = T_{R_n}$.
\end{proposition}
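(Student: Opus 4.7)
The whole proposition hinges on the \emph{detailed balance} identity
\[
q^{(n+1)}_v \, s^{(n)}_{v,w} \;=\; q^{(n)}_w \, r^{(n)}_{w,v}, \qquad w\in V_n,\ v\in V_{n+1},
\]
which is immediate from the definition $s^{(n)}_{v,w} = q^{(n)}_w r^{(n)}_{w,v}/q^{(n+1)}_v$. I would first use it to establish part (2): unfolding $\langle T_{R_n} f, g\rangle_{\mc H_n}$ and $\langle f, T_{S_n} g\rangle_{\mc H_{n+1}}$ through the definitions produces, on both sides, the same double sum $\sum_{w,v} q^{(n)}_w r^{(n)}_{w,v} f(v) g(w)$, whence $(T_{R_n})^* = T_{S_n}$; the companion identity then follows by symmetry.

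For part (1), positivity in the sense that non-negative functions are sent to non-negative functions is immediate from the non-negativity of all entries $r^{(n)}_{w,v}$ and $s^{(n)}_{v,w}$. For contractivity of $T_{R_n}$, the essential point is that $R_n$ is row-stochastic, $\sum_v r^{(n)}_{w,v} = 1$, so Jensen's inequality gives the pointwise estimate $(T_{R_n}f)(w)^2 \leq (T_{R_n}(f^2))(w)$. Weighting by $q^{(n)}_w$, summing over $w\in V_n$, and applying the pushforward relation $\sum_w q^{(n)}_w r^{(n)}_{w,v} = q^{(n+1)}_v$ then yields $\|T_{R_n}f\|^2_{\mc H_n} \leq \|f\|^2_{\mc H_{n+1}}$. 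For $T_{S_n}$ I would observe that $S_n$ is also row-stochastic, since $\sum_w s^{(n)}_{v,w} = (q^{(n+1)}_v)^{-1}\sum_w q^{(n)}_w r^{(n)}_{w,v} = 1$, and run the same Jensen argument; alternatively, once part (2) is in hand, contractivity of $T_{S_n}$ is automatic as the adjoint of a contraction.

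The hard part is analytic rather than algebraic: when the levels $V_n$ are countably infinite one must verify that $T_{R_n}$ and $T_{S_n}$ are well-defined as bounded operators on the full Hilbert spaces before speaking of adjoints, and one must justify the Fubini-type interchange of summation in the adjoint calculation. I would handle both by first working on the dense subspaces of finitely supported functions, where every sum reduces to a finite one and all identities hold term-by-term; the contractivity estimate then supplies the a priori $\mc H_n$-bound needed to extend the operators by continuity, while the non-negativity of the summands $q^{(n)}_w r^{(n)}_{w,v} f(v) g(w)$ in the adjoint identity lets Tonelli's theorem legitimize the exchange of order, closing the loop.
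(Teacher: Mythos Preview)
The paper states this proposition without proof (the section is attributed to the companion papers \cite{BJ2019} and \cite{BJ-2020}), so there is nothing to compare against; your argument is the natural one and is essentially correct.

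One small slip in the last paragraph: you justify the interchange of summation in the adjoint identity by invoking Tonelli and ``the non-negativity of the summands $q^{(n)}_w r^{(n)}_{w,v} f(v) g(w)$'', but $f(v)g(w)$ need not be non-negative for general $f,g$. What you actually have, once contractivity of $T_{R_n}$ is in hand, is absolute summability: applying Cauchy--Schwarz to the probability weights $r^{(n)}_{w,\cdot}$ and then to the outer sum in $w$ gives
\[
\sum_{w,v} q^{(n)}_w r^{(n)}_{w,v}\, |f(v)|\,|g(w)| \;\le\; \|f\|_{\mc H_{n+1}}\|g\|_{\mc H_n} < \infty,
\]
after which Fubini (rather than Tonelli) legitimizes the exchange. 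Alternatively, and more in the spirit of your own outline, once the adjoint identity is verified on finitely supported functions and both operators are shown to be bounded, the identity $\langle T_{R_n}f,g\rangle_{\mc H_n}=\langle f,T_{S_n}g\rangle_{\mc H_{n+1}}$ extends to all of $\mc H_{n+1}\times\mc H_n$ by continuity of both sides, and no separate Fubini argument is needed at all.
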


We use now the graph $B = (V,E)$ and turn it in an electrical network
$G(B)$ by defining the conductance function $c := c^{(n)}_{w,u}$.

\begin{definition}
For $w \in V_{n-1}, v \in V_n, u \in V_{n+1}$, we set 
\be\label{eq cond fn}
c^{(n)}_{vu} = \frac{1}{2} q^{(n)}_v r^{(n)}_{v,u},\quad 
c^{(n-1)}_{vw} = \frac{1}{2} q^{(n)}_v  s^{(n-1)}_{v,w}. 
\ee
\end{definition}

\begin{lemma}\label{lem c(v)}
(1) $c^{(n)}_{vu} = c^{(n)}_{uv}$, i.e., the conductance function 
$c$  is correctly defined on edges from $E$.

(2) 
$$
c_n(v) = q^{(n)}_v,\ \ \ v\in V_n, \ n\in \N_0.
$$
\end{lemma}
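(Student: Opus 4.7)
My plan is to verify the two assertions directly from the definitions, essentially by unwinding \eqref{eq cond fn} and using the two key identities: the row-stochastic property $\sum_{u \in V_{n+1}} r^{(n)}_{w,u} = 1$, and the definition of the vectors $q^{(n)}$.

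For part (1), the symmetry $c^{(n)}_{vu} = c^{(n)}_{uv}$ is a direct consequence of the definition of the dual matrix $S_n$. Starting from $c^{(n)}_{uv} = \tfrac12 q^{(n+1)}_u s^{(n)}_{u,v}$ and substituting $s^{(n)}_{u,v} = \frac{q^{(n)}_v}{q^{(n+1)}_u} r^{(n)}_{v,u}$ immediately yields $c^{(n)}_{uv} = \tfrac12 q^{(n)}_v r^{(n)}_{v,u} = c^{(n)}_{vu}$. So in effect the factor $\tfrac12$ together with the choice of $s^{(n)}_{u,v}$ was engineered to make the detailed balance relation hold.

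For part (2), I would fix $v \in V_n$ with $n \ge 1$ and split the set of neighbors into those in $V_{n-1}$ and those in $V_{n+1}$. Applying the two formulas in \eqref{eq cond fn}, one gets
\[
c_n(v) \;=\; \sum_{w \in V_{n-1},\, (wv)\in E_{n-1}} \tfrac12 q^{(n)}_v s^{(n-1)}_{v,w} \;+\; \sum_{u \in V_{n+1},\, (vu)\in E_n} \tfrac12 q^{(n)}_v r^{(n)}_{v,u}.
\]
The second sum equals $\tfrac12 q^{(n)}_v$ by the row-stochastic hypothesis on $R_n$. For the first sum I substitute $s^{(n-1)}_{v,w} = \frac{q^{(n-1)}_w}{q^{(n)}_v} r^{(n-1)}_{w,v}$, which collapses the $q^{(n)}_v$ factors and leaves $\tfrac12 \sum_w q^{(n-1)}_w r^{(n-1)}_{w,v}$. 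By the very definition of $q^{(n)}_v$ this equals $\tfrac12 q^{(n)}_v$. Adding the two halves gives $c_n(v) = q^{(n)}_v$.

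There is no real obstacle — this is a bookkeeping lemma, and the only thing to watch is the boundary level $n=0$, where vertices have no neighbors in $V_{-1}$; for that case the identity depends on a convention about the root level (either $V_0$ has an auxiliary contribution absorbing the missing half, or the formula is understood only for $n \ge 1$), and I would simply note this. Otherwise the argument reduces to two symbolic substitutions and the observation that both the forward transition matrix $R_n$ and the dual matrix $S_{n-1}$, viewed as sending probability mass at $v$ to its neighbors, are row-stochastic once $q^{(n)}$ is defined as the push-forward of $q^{(n-1)}$.
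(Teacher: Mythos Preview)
The paper does not supply a proof of this lemma; it is stated and immediately used, evidently regarded as a routine verification. Your argument is correct and is precisely the computation one would expect: part~(1) follows by substituting the definition of $s^{(n)}_{u,v}$, and part~(2) from the row-stochasticity of $R_n$ together with the defining recursion for $q^{(n)}$ (equivalently, the row-stochasticity of $S_{n-1}$). Your remark about the boundary case $n=0$ is well taken: as written, a vertex $v\in V_0$ has neighbours only in $V_1$, so the direct computation gives $c_0(v)=\tfrac12 q^{(0)}_v$ rather than $q^{(0)}_v$; the paper's statement for $n\in\N_0$ thus implicitly relies on a convention at the root level that is not spelled out.
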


Lemma \ref{lem c(v)} shows that $c(v) = (c_n(v))$ is finite for every 
$v \in V$. We will omit the index $n$ in $c_n(v)$ if it is clear that 
$v $ is taken from $V_n$.

\begin{definition}\label{def Markov krnl} 
For $G = G(B)$ and the conductance function $c$ as
above, we define a \textit{reversible Markov kernel} $M = \{m(v,u) : 
v, u \in V\}$ by setting
$$
m(v, u)=
\begin{cases}
\dfrac{c^{(n)}_{vu}}{c_{n}(v)} = \frac{1}{2}  r^{(n)}_{v, u}, 
& v \in V_n, u\in V_{n+1},\\
\\
\dfrac{c^{(n-1)}_{uv}}{c_{n}(v)} =\frac{1}{2} s^{(n-1)}_{v, u}, 
& v \in V_n, u\in V_{n- 1}.\\
\end{cases}
$$
\end{definition}

The Markov kernel $M$ is \textit{reversible}, i.e., 
$$
c(v) m(v, u) = c(u) m(u, v), \quad \forall v, u,\ v\sim u. 
$$

\begin{definition}
Let $f = (f_n)$ be a function defined on vertices of the graph $G(B)$
(or the Bratteli diagram). Suppose that a Markov kernel $M$ is defined as 
in \ref{def Markov krnl}.  The operator 
$$
(Mf)(v) = \sum_{u\sim v} m(v,u)f(u), \ \ v \in V.
$$
is called a \textit{Markov operator} acting on the weighted network 
$(G,c)$.

Define a \textit{Laplacian operator} $\Delta$:
$$
(\Delta f)(v) = \sum_{u\sim v} c_{vu}(f(v) - f(u)) = c(v)[f(v) - 
(Mf)(v)], \ \ \ v \in V. 
$$

A function $f$ is called \textit{harmonic}, if $M(f) = f$ (equivalently, 
$\Delta f = 0$). 
\end{definition}

\begin{theorem}\label{thm harmonic}
Let $f = (f_n)$ be a function on the vertex set of $G(B)$. Then $f$ is
harmonic if and only if 
 $2f_n = R_n f_{n+1} + S_{n-1} f_{n-1}$.

In particular, 
\be\label{eq q M}
q^{(n)} M = \frac{1}{2}( q^{(n+1)} + q^{(n-1)}), \ \ \ n \in \N.
\ee
\end{theorem}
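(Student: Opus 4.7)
The plan is to unfold the definitions. Fix $v \in V_n$. Because $B=(V,E)$ is a (generalized) Bratteli diagram, the neighbors of $v$ lie only in $V_{n-1} \cup V_{n+1}$, so the Markov-operator sum splits cleanly as
$$
(Mf)(v) \;=\; \sum_{u \in V_{n+1},\, u \sim v} m(v,u)\, f(u) \;+\; \sum_{u \in V_{n-1},\, u \sim v} m(v,u)\, f(u).
$$
Substituting the two cases of Definition~\ref{def Markov krnl}, the first sum equals $\tfrac12 \sum_{u \in V_{n+1}} r^{(n)}_{v,u} f_{n+1}(u) = \tfrac12 (T_{R_n} f_{n+1})(v)$, while the second equals $\tfrac12 (T_{S_{n-1}} f_{n-1})(v)$. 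Thus the harmonicity equation $Mf = f$ holds level by level if and only if $2 f_n = R_n f_{n+1} + S_{n-1} f_{n-1}$, which proves the first equivalence.

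For the ``in particular'' assertion, interpret $q$ as the function $v \mapsto q^{(n)}_v$ on $V$, and read $q^{(n)} M$ as the restriction of the row-vector $qM$ to the levels adjacent to $V_n$ (since $m(\cdot,u)$ vanishes off $V_{n-1} \cup V_{n+1}$ for $u$ in these levels). I treat the two restrictions separately. For $u \in V_{n+1}$, the defining recursion for $q^{(n+1)}$ gives
$$
(q^{(n)} M)(u) = \sum_{v \in V_n,\, v \sim u} q^{(n)}_v \cdot \tfrac12\, r^{(n)}_{v,u} \;=\; \tfrac12 \, q^{(n+1)}_u .
$$
For $u \in V_{n-1}$, substitute $s^{(n-1)}_{v,u} = \tfrac{q^{(n-1)}_u}{q^{(n)}_v} r^{(n-1)}_{u,v}$ and use the row-stochasticity of $R_{n-1}$:
$$
(q^{(n)} M)(u) = \sum_{v \in V_n,\, v \sim u} q^{(n)}_v \cdot \tfrac12 \cdot \frac{q^{(n-1)}_u}{q^{(n)}_v}\, r^{(n-1)}_{u,v} \;=\; \tfrac12\, q^{(n-1)}_u \sum_{v} r^{(n-1)}_{u,v} \;=\; \tfrac12\, q^{(n-1)}_u .
$$
Combining these two computations yields $q^{(n)} M = \tfrac12 \bigl(q^{(n+1)} + q^{(n-1)}\bigr)$.

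There is no serious obstacle: the proof is a direct unfolding of Definitions \ref{def Markov krnl} and \ref{eq cond fn} together with Proposition~\ref{prop T_P T_Q}. The only care needed is bookkeeping the levels, keeping in mind that $R_n$ points forward (from $V_n$ to $V_{n+1}$) while $S_{n-1}$ points backward (from $V_n$ to $V_{n-1}$), so that the harmonicity condition at level $n$ couples $f_n$ with both $f_{n+1}$ (through $R_n$) and $f_{n-1}$ (through $S_{n-1}$). The prefactor $\tfrac12$ is baked into the definition of $m(v,u)$ and is exactly what produces the symmetric average appearing on the right-hand side of the second identity.
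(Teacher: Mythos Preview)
Your proof is correct. The paper states this theorem without proof, and your argument is exactly the natural one: unfold the definition of the Markov kernel $M$ level by level, use that the neighbors of $v\in V_n$ lie in $V_{n-1}\cup V_{n+1}$, identify the two sums with $T_{R_n}$ and $T_{S_{n-1}}$, and for the second identity use the recursion defining $q^{(n+1)}$ together with the row-stochasticity of $R_{n-1}$.
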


The next result characterizes functions og finite energy for the 
network $(G(B), c)$.

\begin{proposition}\label{prop finite energy}
Suppose that a function $f = (f_n)$ is such that every 
$f_n$  belongs to $ \mc H_n, n \geq 0,$ where the Hilbert space 
$\mc H_n$ is  defined in \eqref{eq H_n}. Then $ f\in \mc H_E$ if and
 only if
$$
\sum_{n \geq 0} \left(||f_n||^2_{\mc H_n} - 2\langle f_n, T_{R_n}
(f_{n+1})\rangle_{\mc H_n} + ||f_{n+1}||^2_{\mc H_{n+1}}
\right) < \infty
$$
where the operator $T_{R_n} : \mc H_{n+1} \to \mc H_n$ is 
defined in Proposition \ref{prop T_P T_Q}. 
\end{proposition}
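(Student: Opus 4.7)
The plan is to compute $\|f\|_{\mc H_E}^2$ directly from its definition, exploiting the feature that in a Bratteli diagram every edge connects consecutive levels, so the defining double sum splits cleanly across levels. Concretely, I would start from
\begin{equation*}
\|f\|_{\mc H_E}^2 = \frac{1}{2}\sum_{x,y}c_{xy}(f(x)-f(y))^2
\end{equation*}
and collapse the ordered sum over adjacent pairs into an (unordered) sum over edges; the factor $\tfrac12$ cancels the double counting and yields
\begin{equation*}
\|f\|_{\mc H_E}^2 = \sum_{n\geq 0}\sum_{\substack{v\in V_n,\,u\in V_{n+1}\\ (vu)\in E_n}} c^{(n)}_{vu}(f_n(v)-f_{n+1}(u))^2.
\end{equation*}

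Next, I would expand the square and compute each of the three resulting sums separately. For the $f_n(v)^2$ piece, I would use $c^{(n)}_{vu}=\tfrac12 q^{(n)}_v r^{(n)}_{v,u}$ from \eqref{eq cond fn} together with $\sum_u r^{(n)}_{v,u}=1$, which gives $\sum_u c^{(n)}_{vu}=\tfrac12 q^{(n)}_v$, and so summing over $v\in V_n$ produces $\tfrac12\|f_n\|_{\mc H_n}^2$. For the $f_{n+1}(u)^2$ piece, the symmetry $c^{(n)}_{vu}=c^{(n)}_{uv}$ of Lemma \ref{lem c(v)} together with the dual formula $c^{(n)}_{uv}=\tfrac12 q^{(n+1)}_u s^{(n)}_{u,v}$ and the identity $\sum_v s^{(n)}_{u,v}=1$ (which follows from the recursive definition of $q^{(n+1)}_u$) yields $\sum_v c^{(n)}_{vu}=\tfrac12 q^{(n+1)}_u$, hence $\tfrac12\|f_{n+1}\|_{\mc H_{n+1}}^2$. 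For the cross term, I would recognize, using the definition of $T_{R_n}$ in Proposition \ref{prop T_P T_Q}, that
\begin{equation*}
-2\sum_{v,u}c^{(n)}_{vu}f_n(v)f_{n+1}(u)=-\sum_v q^{(n)}_v f_n(v)(T_{R_n}f_{n+1})(v)=-\langle f_n,T_{R_n}f_{n+1}\rangle_{\mc H_n}.
\end{equation*}
Adding the three contributions gives
\begin{equation*}
2\|f\|_{\mc H_E}^2 = \sum_{n\geq 0}\bigl(\|f_n\|_{\mc H_n}^2-2\langle f_n,T_{R_n}f_{n+1}\rangle_{\mc H_n}+\|f_{n+1}\|_{\mc H_{n+1}}^2\bigr),
\end{equation*}
and since each summand is the nonnegative quantity $2\sum_{(vu)\in E_n}c^{(n)}_{vu}(f_n(v)-f_{n+1}(u))^2$, convergence of the series is equivalent to $\|f\|_{\mc H_E}^2<\infty$, giving both directions of the claimed equivalence.

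The main bookkeeping obstacle is keeping the two appearances of $\tfrac12$ straight, one from the unordered-edge reduction and one built into $c^{(n)}_{vu}=\tfrac12 q^{(n)}_v r^{(n)}_{v,u}$, and in particular using the \emph{forward} form of the conductance to pull out $\|f_n\|_{\mc H_n}^2$ but the \emph{backward} form $c^{(n)}_{uv}=\tfrac12 q^{(n+1)}_u s^{(n)}_{u,v}$ to produce $\|f_{n+1}\|_{\mc H_{n+1}}^2$; using only one description of the conductance would prevent both diagonal norms from appearing symmetrically. Apart from this, the argument is a direct computation once one observes that the levelwise decomposition of edges in a generalized Bratteli diagram turns the energy form into a telescoping-style bilinear expression governed by the operator $T_{R_n}$.
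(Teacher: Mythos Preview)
Your argument is correct and is exactly the natural direct computation: split the energy sum over levels using the Bratteli structure, expand the square, and identify the three pieces using the forward and backward descriptions of $c^{(n)}_{vu}$ from \eqref{eq cond fn} together with Lemma~\ref{lem c(v)}. The paper itself does not include a proof of this proposition (it is stated without proof, with attribution to \cite{BJ2019} and \cite{BJ-2020}), so there is nothing to compare against; your computation is precisely what one would expect such a proof to be.
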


\begin{remark}
We mention here two results which characterize weighted networks 
with nontrivial (or trivial) space of harmonic functions in $\mc H_E$.

It is a well known fact that if an electrical network $(G, c)$ is recurrent, 
then$\mc H_E$ is trivial (this result can be found, for instance, in 
\cite{LyonsPeres_2016}).  

(1) In \cite{Carmesin2012} the author characterizes the locally finite 
connected networks $(C, c)$ that admit nonconstant harmonic functions 
of finite  energy. More precisely, it is proved  $(G, c)$ has such 
harmonic functions  if and only if there exist transient vertex-disjoint 
subnetworks $G_1 = (V_1, E_1)$ and $G_2= (V_2, E_2)$ such that the
 graph obtained from $G$
by contracting each of the sets $V_1$ and $V_2$ to a vertex admits a 
function  $\varphi$ of finite energy such that $\varphi(A \neq \varphi(B)$.

(2) In \cite{Georgakopoulos_2010}, harmonic functions 
for lamplighter graphs are considered.   It is proved that:
 
 Let $G$ be a connected locally finite graph and $H$ is a connected nontrivial finite graph. Then the wreath product $G \wr H$ does not
 admit any non-constant function of finite energy.

Harmonic functions on the lamplighter graphs are considered also in 
\cite{Benjamini_etal_2017}. 
 
\end{remark}

%\bibliographystyle{alpha}
%\bibliography{references_MBD.bib}

\bibliographystyle{alpha}
\bibliography{bibliography-added,references_MBD.bib}

\end{document}

\tcb{add more about Br diagrams; dynamics; finite and countable
BD; energy space and harmonic functions. Discuss sections 6 and 7}

\cite{Bratteli1972,  Bezuglyi_Jorgensen2015, BJ_2019, BJ2019,
BratteliJorgensenKimRoush2002,
BezuglyiKwiatkowskiMedynetsSolomyak2010,
BezuglyiKwiatkowskiMedynetsSolomyak2013,
BezuglyiKarpel2016, Durand2010}, \cite{Jorgensen_Pearse2010},
\cite{Jorgensen_Pearse2013}.

\tcb{\cite{JP_2018} add to sect 4}